\documentclass[11pt]{amsart}
\usepackage{xcolor}
%\usepackage[paperwidth=7in, paperheight=10in, margin=.875in]{geometry}
%\usepackage{amsmath}
%\usepackage{amsfonts,amsmath}
%\usepackage{paralist}
%\usepackage{graphics} %% add this and next lines if pictures should be in esp format
%\usepackage{epsfig} %For pictures: screened artwork should be set up with an 85 or \textit{1}00 line screen
%\usepackage{graphicx}  \usepackage{epstopdf}%This is to transfer .eps figure to .pdf figure; please compile your paper using PDFLeTex or PDFTeXify.
%\usepackage[colorlinks=true]{hyperref}
% Warning: when you first run your tex file, some errors might occur,
% please just press enter key to end the compilation process, then it will be fine if you run your tex file again.
% Note that it is highly recommended by AIMS to use this package.
%\hypersetup{urlcolor=blue, citecolor=red}
%\usepackage{hyperref}
%\topmargin 0ex \headheight 0ex \textwidth 5.9in \textheight 8.8in
%\usepackage{showframe}
\usepackage[margin=1in]{geometry}
 \numberwithin{equation}{section}

\newcommand{\sk}{\ln\esk }
\newcommand{\des}{\Delta\rho }
\newcommand{\es}{\rho }
\newcommand{\sko}{ \ln\esko}

\newcommand{\esk}{\rho_k }
\newcommand{\esko}{\rho_{k-1} }
\newcommand{\esbj}{\rbj}
\newcommand{\eskx}{\rho_k(x)}
\newcommand{\uk}{u_k }

\newcommand{\uko}{u_{k-1} }

\newcommand{\utj}{\tilde{u}_j }
\newcommand{\wtj}{\tilde{w}_j }
\newcommand{\ctj}{\tilde{\sigma}_j }
\newcommand{\ubj}{\overline{u}_j }

\newcommand{\rbj}{\overline{\rho_j} }
\newcommand{\rtj}{\tilde{\rho_j} }

\newcommand{\gbj}{\overline{G_j }}
\newcommand{\ftj}{\tilde{F_j }}

\newcommand{\omt}{\Omega_T }
\newcommand{\io}{\int_\Omega }
\newcommand{\iot}{\int_{\Omega_T} }

\newcommand{\po}{\partial\Omega }

\newcommand{\pt}{\partial_t}
\newcommand{\edu}{\rho }

\newcommand{\ep}{\varepsilon}

\newtheorem{thm}{Theorem}[section]

\newtheorem{lem}[thm]{Lemma}

\begin{document}
	\title[A fourth order exponential PDE ]{
		Strong solutions to a fourth order exponential PDE describing epitaxial growth
		%Analysis of a continuum  model for broken bond crystal surfaces with evaporation and deposition effects
	}
	\author{ Brock C. Price and Xiangsheng Xu}\thanks
	%\addressa Multi-Dimensional Crystal Surface Model Analysis ofA Crystal Surface Model
	{%Liu's address: Department of Mathematics,  University, Durham, NC 27708. {\it Email:} jliu@phy.duke.edu.\\
%	Xu's address:	
Department of Mathematics and Statistics, Mississippi State
		University, Mississippi State, MS 39762.
		{\it Email}: bcp193@msstate.edu(Brock C. Price); xxu@math.msstate.edu(X. Xu). {\it J. Differential Equations, to appear.}}
 \keywords{	Crystal surface	models; Exponential nonlinearity; Existence; Nonlinear fourth order
		parabolic equations. 
 } \subjclass{35A01, 35D35, 35Q82}
	\begin{abstract} In this paper we prove the global existence of a strong solution to the initial boundary value problem for the exponential partial differential equation $\partial_tu-\Delta e^{-\Delta u}+e^{-\Delta u}-1=0$. The equation was proposed as a continuum model for epitaxial growth of crystal surfaces on vicinal surfaces with evaporation and deposition effects  \cite{GLLM}. Our investigations reveal that we must control the size of both  $\left\| e^{-\Delta u(x,0)}\right\|_{W^{2,2}(\Omega)}$ and $\left\| e^{\Delta u(x,0)}\right\|_{\infty,\Omega}$ suitably to achieve our results. Related results in 
		\cite{GM,LS} were established via the Weiner algebra framework. Here we offer a totally new approach, which seems to shed more light on the nature of exponential nonlinearity.
		% was employed.%That is, the two norms cannot be arbitrarily large. 
		%Our approach is totally different from the ones offered in%This
		%We offer a totally new perspective from which to view the problem than the ones 
	\end{abstract}
	\maketitle

	\section{Introduction}
	
\subsection{Problem background and statement of main results}	Let $\Omega$ be a bounded domain in $\mathbb{R}^N$ with $C^2$ boundary $\partial\Omega$. For each $T>0$, we consider the initial-boundary value problem
	\begin{eqnarray}
	\partial_t u &=& \Delta e^{-\Delta u}-e^{-\Delta u}+1  \ \ \ \mbox{in $\omt\equiv \Omega\times(0,T)$,}\label{p1}\\
	\nabla u\cdot\nu=\nabla e^{-\Delta u}\cdot\nu &=& 0 \ \ \ \mbox{ on $\Sigma_T\equiv \partial\Omega\times(0,T)$},\label{p2}\\
	u(x,0)&=& u_0(x) \ \ \ \mbox{on $\Omega$},\label{p3}
	\end{eqnarray}
	where $\nu$ is the unit outward normal vector to the boundary.
	
Equation (\ref{p1})	can be used to describe the evolution of a crystal surface \cite{GLLM}. 
%This problem arises in the study of crystal surfaces. 
In this case, $u$ is the surface height. The fourth order term in the equation represents the diffusion effect, while the lower order terms describe evaporation and deposition. Detailed information can be found in \cite{GLLM}.

Epitaxial growth is an important process in forming solid films and other nano-structures. Mathematical modeling of the process has attracted wide attentions \cite{GLLM}. Continuum models involving exponential nonlinearity were first derived in \cite{KDM} and more recently in \cite{MW,GLLM}. Mathematical analysis of such models in high space dimensions ($N\geq 2$) is very challenging due to the lack of estimates for the exponent term. It was first observed in \cite{LX} that one had to allow the possibility that the exponent be a measure-valued function. Later, the idea of ``exponential singularity'' was  employed in \cite{G1,GLL3,GLLX,X1,X5}. However, measure exponents do not arise in the one-dimensional case. See \cite{GLL2,GLLM}.

To remove the singularity in the exponent, the authors in \cite{GM,LS} introduced a rather sophisticated critical Wiener algebra space and showed that there existed a strong (no measure) solution as long as the norm of $u_0$ in the  Wiener algebra space was suitably small. The proof in  \cite{LS} employed the Fourier transform of the power series expansion of the exponential term. A similar approach was also adopted in 
\cite{GM}.
Here we offer a totally different perspective from which to view the problem. Our method is based upon Lemma \ref{prop2.2} below, a simple result first introduced in \cite{X4}. Denote by
$\|\cdot\|_{p,\Omega}$ the norm in the space $L^p(\Omega)$.  Our investigations reveal that we can obtain global existence of a strong solution by requiring the $W^{2,2}(\Omega)$ norm of $e^{-\Delta u_0}$ and $\|e^{\Delta u_0}\|_{\infty,\Omega}$ to be suitably small.
	
Before we state our main theorem, we give our definition of a strong solution.

\bigskip

\noindent {\bf Definition:} 
We say that a pair $(u,\rho)$ is a strong solution to (\ref{p1})-(\ref{p3}) if the following conditions hold:
\begin{enumerate}
	\item[(D1)]
	$u,\ \rho\in L^\infty(0,T;W^{2,2}(\Omega))\cap W^{1,2}(\Omega_T)$ with $\rho\geq c_0$ for some positive number $c_0$;
	%, $u\in C([0,T]; L^2(\Omega))\cap L^\infty(0,T; W^{2,2}(\Omega))$;
	%, where $\left(W^{1,2}(\Omega)\right)^*$ is the dual space of $W^{1,2}(\Omega)$ and $\mathcal{M} (\overline{\Omega_T})$ is the space of signed Radon measures on $\overline{\Omega_T}$;
	\item[(D2)] 
	%there hold the equations
	We have
	\begin{eqnarray}
		\frac{\partial u}{\partial t} &=& \Delta \rho-\rho+1  \ \ \ \mbox{a.e. on $\omt$,}\nonumber\\
		-\Delta u&=&\ln \rho \ \ \ \mbox{a.e. on $\omt$,}\nonumber\\
		\nabla u\cdot\nu=	\nabla \rho\cdot\nu &=& 0 \ \ \ \mbox{a.e. on $\Sigma_T$}.\nonumber
	\end{eqnarray}
	%	where $g_a$ is the absolutely continuous part of $-\Delta u$ with respect to
	%	the Lebesgue measure in the Lebesgue Decomposition Theorem;
	The initial condition (\ref{p3}) is satisfied in the space $C([0,T]; L^2(\Omega))$.
\end{enumerate}

\bigskip

Our main result is the following 
\begin{thm}[Main Theorem]\label{thm1}	Assume:
	\begin{enumerate}
		\item[\textup{(H1)}] $\Omega$ is a bounded domain in $\mathbb{R}^N$ with $C^2$ boundary;
		\item[\textup{(H2)}] $N=2$ or $3$;
		\item[\textup{(H3)}] $u_0 \in W^{2,2}(\Omega)$ is such that $ e^{-\Delta u_0 }\in W^{2,2}(\Omega) $. Moreover, we have
		the consistence conditions
		\begin{equation}\label{bcon}
			\nabla u_0\cdot\nu=0,\ \ \nabla e^{-\Delta u_0 }\cdot\nu=0\ \ \mbox{a.e. on $\po$.}
		\end{equation}
	\end{enumerate}  Then there exist two positive numbers $s_0, s_1$ determined by $\Omega$ only
% a positive number $c_0=c_0(N, \Omega)$ 
such that 
	problem (\ref{p1})-(\ref{p3}) has a global strong solution whenever 
	\begin{equation}\label{ez}
		\varepsilon_0\equiv	\left\| e^{-\Delta u_0}\right\|_{W^{2,2}(\Omega)}<s_0, \ \ \left\|e^{\Delta u_0}\right\|_{\infty, \Omega}< s_1.
	\end{equation}
\end{thm}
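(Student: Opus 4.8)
The plan is to construct the solution by a semi-discrete time-stepping (Rothe) scheme combined with the key algebraic observation behind the substitution $\rho = e^{-\Delta u}$. Write the PDE as the coupled system $\partial_t u = \Delta\rho - \rho + 1$, $-\Delta u = \ln\rho$, so that $\partial_t u$ is governed by a (nearly linear) parabolic equation in $\rho$, while $u$ is recovered from $\rho$ by solving a Poisson problem with Neumann data. Fix a time step $\tau = T/n$ and, given $\uko$, define $\uk$ implicitly by
\begin{equation}\label{prop-scheme}
	\uut = \Delta\esk - \esk + 1,\qquad -\Delta\uk = \ln\esk\quad\text{in }\Omega,
\end{equation}
with the Neumann conditions on $\po$. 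Existence of each step follows from a variational/fixed-point argument; the nontrivial part is the \emph{a priori} estimates that survive the limit $\tau\to 0$, and this is where the smallness of $\ep_0 = \|e^{-\Delta u_0}\|_{W^{2,2}(\Omega)}$ and of $\|e^{\Delta u_0}\|_{\infty,\Omega}$ enters.

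The core of the argument is a two-sided bound on $\esk$. Testing the $\rho$-equation against $-\Delta\esk$ (or against $\psi_k(x)$, suitable powers/logarithms of $\esk$) and using Lemma~\ref{prop2.2} to absorb the nonlinear term, I expect to obtain, for $N=2,3$, an inequality of the schematic form
\begin{equation}\label{prop-iter}
	\|\esk\|_{W^{2,2}(\Omega)}^2 + \tau\sum_{j\le k}\|\nabla\esbj\|_{?}^2 \;\le\; \|\esko\|_{W^{2,2}(\Omega)}^2 + C\tau\,\Phi\!\left(\|\esk\|_{W^{2,2}(\Omega)}\right),
\end{equation}
where $\Phi$ is superlinear and the constant $C$ depends only on $\Omega$ (via Sobolev and elliptic-regularity constants). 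A continuous-induction / bootstrap argument then shows that if $\ep_0 < s_0$ the quantity $\|\esk\|_{W^{2,2}(\Omega)}$ stays in a fixed small ball for all $k$, uniformly in $\tau$. Simultaneously one must propagate the lower bound $\esk \ge c_0 > 0$: this is where $\|e^{\Delta u_0}\|_{\infty,\Omega} = \|1/\rho_0\|_{\infty,\Omega} < s_1$ is used, via a maximum-principle / De Giorgi estimate on the (degenerate but controlled) equation \eqref{prop-scheme} for $\min(\esk, c_0)$ or for $1/\esk$. Together these give $\esk$ bounded above in $W^{2,2}$ and below by a positive constant, hence (by the embedding $W^{2,2}(\Omega)\hookrightarrow L^\infty(\Omega)$ in dimension $N\le 3$) a uniform two-sided $L^\infty$ bound, and then $\ln\esk$ is bounded in $W^{2,2}$, so elliptic regularity gives $\uk$ bounded in $W^{2,2}$ — and in fact better, so that $-\Delta\uk = \ln\esk$ makes sense pointwise a.e.

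With these uniform estimates in hand, I would define the piecewise-constant and piecewise-linear interpolants $\pst$, $\ut$ in time, extract weakly-/weakly-$*$-convergent subsequences in $L^\infty(0,T;W^{2,2})\cap W^{1,2}(\Omega_T)$, and use an Aubin–Lions compactness argument to upgrade to strong convergence in, say, $L^2(0,T;W^{1,2}(\Omega))$ and in $C([0,T];L^2(\Omega))$. Strong convergence is exactly what is needed to pass to the limit in the nonlinear terms $\esk$ and $\ln\esk$ (the uniform positive lower bound makes $\ln$ Lipschitz on the relevant range, so no measure-valued exponent appears — this is the payoff of the smallness hypotheses). The limit pair $(u,\rho)$ then satisfies (D1)–(D2), and the initial condition is recovered in $C([0,T];L^2(\Omega))$ from the $W^{1,2}(\Omega_T)$ bound.

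The main obstacle I anticipate is closing the \emph{a priori} estimate \eqref{prop-iter} with a constant depending on $\Omega$ \emph{only} — in particular, handling the commutator/boundary terms that arise when one differentiates the Neumann problem twice (the conditions \eqref{bcon} and the $C^2$ regularity of $\po$ are there precisely for this), and choosing the right test function so that Lemma~\ref{prop2.2} converts $\int_\Omega e^{-\Delta u}(\cdots)$ into something controlled by $\|\rho\|_{W^{2,2}}$ without losing the smallness. A secondary difficulty is the lower bound $\esk\ge c_0$: the equation for $1/\esk$ (or for $e^{\Delta u}$) is not as friendly as the one for $\esk$, and one must be careful that the time-discretization does not destroy the comparison principle; I expect to need a discrete maximum principle adapted to \eqref{prop-scheme} together with the smallness of the data.
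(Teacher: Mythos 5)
Your overall architecture (implicit time discretization, uniform two-sided bounds on $\rho$, Aubin--Lions compactness, passage to the limit with $\ln$ Lipschitz on the admissible range) matches the paper, and you correctly assign the two smallness hypotheses to the upper and lower bounds on $\rho$ respectively. However, there are two genuine gaps at the heart of the argument. First, your schematic energy inequality $\|\esk\|_{W^{2,2}}^2\le\|\esko\|_{W^{2,2}}^2+C\tau\,\Phi(\|\esk\|_{W^{2,2}})$ with $\Phi$ superlinear cannot close globally in time: a Gronwall/continuous-induction argument on such an inequality either blows up in finite time or forces constants depending on $T$, contradicting the claim that $s_0,s_1$ depend on $\Omega$ only. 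The paper avoids this entirely: writing $G=\partial_tu+\rho-1$ and squaring the identity $-\Delta\rho+G=0$ (and its discrete time derivative, tested against $G$) produces \emph{exact telescoping identities} in which every cross term is a full time difference of a nonnegative quantity, yielding the dissipative bounds \eqref{nm1} and \eqref{nm3} --- $\sup_t\io\rho$, $\iot(\Delta\rho)^2$, $\sup_t\io(|\nabla\rho|^2+G^2)\le\varepsilon_0^2$ --- with no growth in $T$. You need to find this structure, not postulate an abstract $\Phi$.

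Second, and more seriously, the lower bound $\rho\ge c_0$ cannot come from a maximum or comparison principle, discrete or otherwise: the system is fourth order in $u$ and, as the paper itself remarks, no comparison principle is available. The actual mechanism is the following chain, which your proposal does not contain. Integrating $-\Delta u=\ln\rho$ over $\Omega$ gives $\io\ln\rho\,dx=0$, whence $\io|\ln\rho|\,dx\le2\io\rho\,dx\le2\varepsilon_0$ and the small-measure estimate \eqref{rsm} for $\{\rho\le 1/L\}$. Then $w=1/\rho$ solves $-\Delta w+2w^{-1}|\nabla w|^2=Gw^2$ with Neumann data, and the elliptic $L^\infty$ estimate of Lemma \ref{wbd} applied slice-by-slice gives the \emph{self-referential quadratic} inequality \eqref{xu4}, $\|w\|_\infty\le cL+c\varepsilon_0\|w\|_\infty/\ln L+c\varepsilon_0\|w\|_\infty^2$. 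The point of Lemma \ref{prop2.2} (which you invoke, but in the wrong place --- for the $W^{2,2}$ estimate rather than here) is that when $\varepsilon_0$ is small enough for the quadratic to have two real roots, continuity of $t\mapsto\|w(\cdot,t)\|_\infty$ confines the trajectory to the component of $\{Q\ge0\}$ containing its initial value; the hypothesis $\|e^{\Delta u_0}\|_\infty<s_1$ places that initial value below the smaller root. Without this root-selection-by-connectedness argument the bound $Gw^2$ on the right-hand side is useless, since it is quadratic in the quantity being estimated. A minor further point: the bare scheme \eqref{prop-scheme} should be regularized (the paper adds $\tau\ln\esk$ and $\tau\uk$ as in \eqref{s31}--\eqref{s33}) to guarantee solvability of each elliptic step via Leray--Schauder and to ensure $\ln\esk\in L^\infty(\Omega)$ before any of the above testing is legitimate.
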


By a global strong solution, we mean that for each $T>0$ there is a strong solution $u$ to \eqref{p1}-\eqref{p3} on $\Omega_T$. We can infer from (H1), (H2), and the Sobolev embedding theorem that
%, for each $p>\frac{3}{2}$ there is a positive number $c=c(\Omega, p)$ such that
\begin{equation}\label{emb}
	\|u\|_{\infty,\Omega}\leq c(\Omega, p)\|u\|_{W^{2,p}(\Omega)}\ \ \mbox{for each $u\in W^{2,p}(\Omega)$ whenever $p>\frac{3}{2}$.}
\end{equation}
Hence we also have $ e^{-\Delta u_0}\in L^{\infty} (\Omega)$. Note that the two inequalities in \eqref{ez} are not contradictory. Roughly speaking, the first one controls the set where $-\Delta u_0$ is very large, while the second one is concerned with the set where $\Delta u_0$ is very large. In fact, our assumptions here reveal the true nature of the exponential nonlinearity. That is, the composite function $e^{-\Delta u_0}$ can still behave well even if the exponent term $-\Delta u_0$ displays singularity near the set $\{-\Delta u_0=-\infty\}$. The second inequality in \eqref{ez} is assumed to prevent this from happening.  We refer the reader to \cite{LX} for more discussions in this regard.
 % Therefore, the  seems to be necessary.
% In this sense, our weak solution is global.\in W^{1,2}(\Omega)\cap \cap L^\infty(\Omega)

\subsection{A priori estimates for smooth solutions}\label{sub2} To gain some insights into our problem, we proceed to perform some formal analysis. By ``formal'', we mean that the solution $u$ to \eqref{p1}-\eqref{p3} is as smooth as we desire
%possesses enough regularity properties 
so that all the subsequent calculations in this subsection make sense. However, the essence of our approach is already demonstrated here.

To simplify our presentation, we introduce the functions
\begin{equation}\label{ur}
	\rho=e^{-\Delta u}, \ \ G=	\partial_tu+\rho-1.
\end{equation}Then \eqref{p1} becomes
\begin{eqnarray}
	-\Delta \rho+G&=&0\ \ \mbox{in $\Omega_T$.}\label{s1}
%	-\Delta u&=&\ln \rho\ \ \mbox{in $\Omega$.}
\end{eqnarray}
Square both sides of this equation and then integrate it with respect to $x$ over $\Omega$ to get
\begin{eqnarray}
	\io\left[\left(\Delta \rho\right)^2+G^2\right]dx-2\io G\Delta \rho dx=0.\label{ff1}
\end{eqnarray}
Note from \eqref{ur} that
\begin{eqnarray}
-2\io G\Delta \rho dx&=&	-2\io\partial_tu\Delta \rho dx+2\io\left|\nabla\edu\right|^2dx\nonumber\\
&=& -2\int_{\Omega}e^{-\Delta u}\partial_t\Delta u \,   \,dx+2\io\left|\nabla\edu\right|^2dx\nonumber\\
&=&2\frac{d}{dt}\int_{\Omega}\rho \,dx+2\io\left|\nabla\edu\right|^2dx.\nonumber
\end{eqnarray}
%$$=  =-.$$
Substitute this into \eqref{ff1} and integrate the resulting equation to obtain
\begin{eqnarray}
	\sup_{0\leq t\leq T}2\io\rho(x,t)dx+
	%\label{ff3}\\\leq\varepsilon_0
	\iot\left[G^2+\left(\Delta \rho\right)^2+2\left|\nabla\edu\right|^2\right]dxdt+
\leq 4\io e^{-\Delta u_0(x)}dx\leq 4\varepsilon_0,\label{nm1}
%2\int_{\Omega} e^{-\Delta u_0(x)}\,dx+|\Omega_T|.
%\label{f2}
\end{eqnarray}
where $\varepsilon_0$ is given as in \eqref{ez}.

Next we differentiate (\ref{s1}) with respect to $t$ and then use $G$ as a test function in the resulting equation to obtain
	\begin{eqnarray}
-\int_{\Omega}\partial_t\Delta\edu \, G\, dx+\frac{1}{2}\frac{d}{dt}\int_{\Omega}G^2\, dx=0.\label{r12}
%&=&-2\io\pt\rho\pt udx\nonumber\\
%&=&-2\io\sqrt{\rho}\pt\sqrt{\rho}\pt udx\nonumber\\
%&\leq&\io\left(\pt\sqrt{\rho}\right)^2dx+\io\rho\left(\pt u\right)^2dx.
%&=&	\int_{\Omega}\partial_t\Delta\edu \, \partial_t u\, dx-\int_{\Omega}\partial_t\edu \, \partial_t u\, dx\nonumber\\
%&=&-4\int_{\Omega}|\partial_t e^{-\frac{\Delta u}{2}}|^2\, dx
	\end{eqnarray}
Observe that 
\begin{eqnarray}
\int_{\Omega}\partial_t\Delta\edu \, G\, dx&=&	\int_{\Omega}\partial_t\Delta \rho\, \partial_t u\, dx+\int_{\Omega}\partial_t\Delta \rho\, (\rho-1)\, dx\nonumber\\
	&=&\int_{\Omega}\partial_te^{-\Delta u}\, \partial_t\Delta u\, dx-\io\pt\nabla\rho\cdot\nabla\rho dx\nonumber\\
	&=&-\int_{\Omega}e^{-\Delta u} \, |\partial_t\Delta u|^2 \, dx-\frac{1}{2}\frac{d}{dt}\io|\nabla\rho|^2dx\nonumber\\
	&=&-4\int_{\Omega}|\partial_t \sqrt{\rho}|^2\, dx-\frac{1}{2}\frac{d}{dt}\io|\nabla\rho|^2dx.\nonumber
\end{eqnarray}
Substitute this into \eqref{r12}
to derive
\begin{eqnarray}
%	\frac{1}{2}\frac{d}{dt}\int_{\Omega}(\partial_t u)^2\, dx\leq 
\lefteqn{\sup_{0\leq t\leq T}\int_{\Omega}\left(\frac{1}{2}|\nabla\rho|^2+\frac{1}{2}G^2	\right)\, dx+4\int_{\Omega_T}|\partial_t \sqrt{\rho}|^2\, dx}\nonumber\\
&\leq& \io\left(|\nabla\rho(x,0)|^2+G^2(x,0)\right)dx\nonumber\\
&=&\io\left|\nabla e^{-\Delta u_0(x)}\right|^2dx+\io\left|\Delta e^{-\Delta u_0(x)}\right|^2dx\leq \varepsilon_0^2.\label{nm3}
%\nonumber\\
%&=&\io\left[\left(\Delta e^{-\Delta u_0(x)} \right)^2+\left|\nabla e^{-\Delta u_0(x)}\right|^2\right]dx.
%\io\left(2\pt\sqrt{\rho}+\frac{1}{2}\sqrt{\rho}\pt u\right)^2dx=\frac{1}{4}	\io\rho\left(\pt u\right)^2dx.
\end{eqnarray}
By virtue of (H2) and Lemma \ref{wbd} below, there is a positive number $c=c(\Omega)$ such that
\begin{equation}\label{rub}
	\|\rho(\cdot,t)\|_{\infty,\Omega}\leq c\|\rho(\cdot,t)\|_{1,\Omega}+c\|G(\cdot,t)\|_{2,\Omega}\leq c\varepsilon_0.
\end{equation}
%This together with \eqref{r23} implies\| e^{-\Delta u_0}\|_{W^{2,2}(\Omega)}
%\begin{equation}
%\|\rho\|_{\infty,\Omega_T}\leq c\varepsilon_0.
%\end{equation}
%Subsequently,
%\begin{equation}
%	\sup_{0\leq t\leq T}\int_{\Omega}|\partial_t u(x,t)|^2\,dx\leq 2	\sup_{0\leq t\leq T}\int_{\Omega}G^2\,dx+2\sup_{0\leq t\leq T}\io(\rho-1)^2dx\leq c\varepsilon_0^2+c.
%\end{equation}

It follows from \eqref{ur} that
\begin{equation}
	-\Delta u=\ln \rho\ \ \mbox{in $\Omega$.}\nonumber
\end{equation}Integrate this equation over $\Omega$ and use \eqref{p2} to obtain
	$$\int_{\Omega}\ln\rho  \, dx=0.$$
	Keeping this and (\ref{nm1}) in mind, we estimate
	\begin{eqnarray}
	\int_{\Omega}|\ln\rho | \, dx &=& \int_{\Omega}\ln^+\rho  \, dx
	+\int_{\Omega}\ln^-\rho  \, dx\nonumber\\
	&=&2\int_{\Omega}\ln^+\rho \,dx
	-\int_{\Omega}\ln\rho  \, dx\nonumber\\
	&\leq& 2\int_{\Omega}\rho  \, dx
	\leq 2\io e^{-\Delta u_0(x)}dx\leq 2\varepsilon_0.\nonumber
	\end{eqnarray}
Fix $L>1$. We have
\begin{equation}\label{rsm}
	\left|\left\{\rho\leq \frac{1}{L}\right\}\right|\leq \frac{1}{\ln L}\int_{\{\rho\leq \frac{1}{L}\}}|\ln\rho|dx\leq\frac{2\varepsilon_0}{\ln L}.
\end{equation}

Set 
\begin{equation}
	w=\frac{1}{\rho}.\nonumber
\end{equation}
We easily verify that
\begin{eqnarray}
	%	\rho&=& w^{-\alpha},\\
	\Delta \rho&=&- w^{-2}\Delta w+2w^{-3}|\nabla w|^2.\nonumber
\end{eqnarray}
Subsequently, $w$ satisfies the boundary value problem
\begin{eqnarray}
	- \Delta w+2w^{-1}|\nabla w|^2&=&Gw^2\ \ \mbox{in $\Omega$},\nonumber\\
	\nabla w\cdot\nu&=&0\ \ \mbox{on $\po$.}\nonumber
\end{eqnarray}
We can infer from Lemma \ref{wbd} below and (H2) that there is a positive number $c=c(\Omega)$ such that
\begin{eqnarray}
	\|w(\cdot,t)\|_{\infty,\Omega}&\leq& c\|w(\cdot,t)\|_{1,\Omega}+c\|G(\cdot,t)w^2(\cdot,t)\|_{2, \Omega}\nonumber\\
	&\leq &c\int_{\{w\leq L\}}wdx+c\int_{\{w> L\}}wdx+c	\|w(\cdot,t)\|_{\infty,\Omega}^2\|G(\cdot,t)\|_{2, \Omega}\nonumber\\
	&\leq &cL+c	\|w(\cdot,t)\|_{\infty,\Omega}\left|\left\{\rho\leq \frac{1}{L}\right\}\right|+c\varepsilon_0	\|w(\cdot,t)\|_{\infty,\Omega}^2\nonumber\\
	&\leq & cL+\frac{c\varepsilon_0\|w(\cdot,t)\|_{\infty,\Omega}}{\ln L}+c\varepsilon_0\|w(\cdot,t)\|_{\infty,\Omega}^2.
		\label{xu4}
\end{eqnarray}
Here we have used \eqref{nm3} and \eqref{rsm}.
 Consider the quadratic function 
$$Q(s)=c\varepsilon_0 s^2-\left(1-\frac{c\varepsilon_0}{\ln L}\right)s+cL\ \ \mbox{on $(0, \infty)$}.$$
%\subsection{Results}
Then \eqref{xu4} says
$$Q(\|w(\cdot,t)\|_{\infty,\Omega})\geq 0\ \ \mbox{for each $t\in [0,T]$.}$$
Suppose that $\|w(\cdot,t)\|_{\infty,\Omega}$ is a continuous function of $t$.  According to the proof of Lemma \ref{prop2.2} below, if we choose $L>1$ and $\varepsilon_0$  so that
\begin{equation}\label{xu1}
1-\frac{c\varepsilon_0}{\ln L}>0,\ \	\left(1-\frac{c\varepsilon_0}{\ln L}\right)^2> 4c^2L\varepsilon_0,
\end{equation}
then 
\begin{equation}
		\|w(\cdot,t)\|_{\infty,\Omega}\leq\frac{1-\frac{c\varepsilon_0}{\ln L}-\sqrt{\left(1-\frac{c\varepsilon_0}{\ln L}\right)^2- 4c^2L\varepsilon_0}}{2c\varepsilon_0}\equiv g(\varepsilon_0, L)\ \ \mbox{for $t>0$}\nonumber
\end{equation}
whenever
\begin{equation}
	\|w(\cdot,0)\|_{\infty,\Omega}\leq g(\varepsilon_0, L).\nonumber
	%\frac{1-\frac{c\varepsilon_0}{\ln L}-\sqrt{\left(1-\frac{c\varepsilon_0}{\ln L}\right)^2- 4c^2L\varepsilon_0}}{2c\varepsilon_0}.
\end{equation}
Take the square root of the second inequality in \eqref{xu1} to derive
$$-\frac{c}{\ln L}\varepsilon_0-2c\sqrt{L}\sqrt{\varepsilon_0}+1>0.$$
Solving this inequality yields
\begin{equation}\label{xu5}
	\sqrt{\varepsilon_0}<\sqrt{L\ln^2L+\frac{\ln L}{c}}-\sqrt{L}\ln L\equiv h(L).
\end{equation}
By (6) in Lemma \ref{l24} below,
$$h(L)\leq \sqrt{\frac{\ln L}{c}}.$$
That is, \eqref{xu5} implies the first inequality in \eqref{xu1}.
We easily see that
$$h(1)=0,\ \ \lim_{L\rightarrow\infty}h(L)=0.$$
Thus $h(L)$ attains its maximum value at some point $L_0\in (1,\infty)$. We take 
\begin{equation}
s_0= h^2(L_0).	\nonumber
\end{equation}
To determine $s_1$, it is easy to see that
\begin{eqnarray}
	g(\varepsilon_0,L)&=&\frac{ 2cL}{1-\frac{c\varepsilon_0}{\ln L}+\sqrt{\left(1-\frac{c\varepsilon_0}{\ln L}\right)^2- 4c^2L\varepsilon_0}}\nonumber\\
	&=&\frac{ 2cL}{1-\frac{c\varepsilon_0}{\ln L}+\sqrt{\frac{c^2}{\ln^2 L}\varepsilon_0^2- \left(\frac{2c}{\ln L}+4c^2L\right)\varepsilon_0+1}},\nonumber
\end{eqnarray}
which is an increasing function of $\varepsilon_0$ on the interval $(0, s_0)$. Thus we take
\begin{equation}
 s_1=g(0, L_0)=cL_0.\nonumber
\end{equation}
Whenever $\|e^{-\Delta u_0}\|_{W^{2,2}(\Omega)}<s_0,\  \|e^{\Delta u_0}\|_{\infty,\Omega}< s_1$, we have
\begin{equation}
	\|e^{\Delta u(\cdot,t)}\|_{\infty,\Omega}\leq g(\|e^{-\Delta u_0}\|_{W^{2,2}(\Omega)},L_0)\ \ \mbox{for all $t>0$.}\nonumber\\
\end{equation}
This together with \eqref{rub} implies
\begin{equation}
\Delta u\in L^\infty(\Omega).\nonumber\\
\end{equation}
In particular, the exponent term is not a measure.
%\begin{eqnarray}
%	s_1= \left.\frac{1-\frac{c\varepsilon_0}{\ln L}-\sqrt{\left(1-\frac{c\varepsilon_0}{\ln L}\right)^2- 4c^2L\varepsilon_0}}{2c\varepsilon_0}\right|_{\varepsilon_0=s_0, L=L_0}.
%\end{eqnarray}
% Since $\sqrt{s}$ is a concave function on $(0,\infty)$, we have

	A solution to (\ref{p1})-(\ref{p3}) will be constructed as the limit of a sequence of approximate solutions. The key is to design an approximation scheme so that all the calculations in Subsection \ref{sub2} can be justified. This is accomplished in Sections 2 and 3. To be more specific,
	%  This paper is organized as follows. 
	in Section 2 we state a few preparatory lemmas and present our approximate 
	problems. The existence of a classical solution is established for these problems. We form a sequence of approximate solutions based upon implicit discretization in the time variable. Section 3 is devoted to the proof of the discretized versions of the results in Subsection \ref{sub2}. These estimates are enough to justify passing to the limit.

	\section{Approximate Problems}
	\setcounter{equation}{0}
	Before we present our approximate problems, we state a few preparatory lemmas.
	
	\begin{lem}
		Let $\Omega$ be a bounded domain in $\mathbb{R}^N$.\begin{enumerate}
			\item[(i)] If $\Omega$ is convex, then
			\begin{equation}
			\int_\Omega(\Delta u)^2 \, dx\geq\int_\Omega|\nabla^2 u|^2 \,dx \ \ \mbox{
				for all $u\in W^{2,2}(\Omega)$ with $\nabla u\cdot\nu=0$ on $\po$.}\nonumber
			\end{equation}
			\item[(ii)] If $\po$ is $C^2$, then there is a positive constant $c$
			depending only on $N, \Omega$ and the smoothness of the boundary
			such that
			\begin{equation}\label{2.2}
			\int_\Omega(\Delta u)^2dx+\int_\Omega|\nabla
			u|^2dx\geq
			c\int_\Omega|\nabla^2u|^2
			dx \ \
			\end{equation}
			for  all $u\in W^{2,2}(\Omega)$ with $\nabla u\cdot\nu=0$ on
			$\po$.
		%	\item[(iii)] If $\po$ is Lipschitz, then there exists a positive number $c=c(N)$ with the property
		%	\begin{equation}
		%	\int_\Omega\left|\nabla\sqrt{u}\right|^4dx\leq c\int_\Omega|\nabla^2u|^2dx
		%	\end{equation}
		%	for all $u\in W^{2,2}(\Omega)$ with $u\geq 0$ on $\Omega$ and $\nabla u\cdot\nu=0$ on
		%	$\po$.
		\end{enumerate}
	\end{lem}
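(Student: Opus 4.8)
\medskip

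\noindent\textbf{Proof strategy.} The plan is to reduce both assertions to a single integration-by-parts identity. For $u\in C^3(\overline\Omega)$ with $\nabla u\cdot\nu=0$ on $\po$, I would write $\io(\Delta u)^2dx=\sum_{i,j}\io\partial_{ii}u\,\partial_{jj}u\,dx$ and integrate by parts twice in each summand, once in the variable $x_i$ and once in $x_j$. The interior contributions assemble into $\io|\nabla^2u|^2dx$, while the boundary contributions collapse to
\begin{equation}\label{sk-bt}
	\int_{\po}\Big[(\nabla u\cdot\nu)\,\Delta u-\nabla u\cdot(\nabla^2u\,\nu)\Big]\,dS.
\end{equation}
The first term in \eqref{sk-bt} vanishes by the Neumann condition. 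For the second I would use that $\nabla u$ is tangent to $\po$ there, and that differentiating $\nabla u\cdot\nu=0$ along a tangential direction $\tau$ gives $(\nabla^2u\,\tau)\cdot\nu=-\nabla u\cdot\partial_\tau\nu$; expanding $\nabla u$ in an orthonormal tangent frame and recognizing the Weingarten map converts $-\nabla u\cdot(\nabla^2u\,\nu)$ into $\mathrm{II}(\nabla u,\nabla u)$, where $\mathrm{II}$ is the second fundamental form of $\po$ with respect to the outward normal. This yields
\begin{equation}\label{sk-id}
	\io(\Delta u)^2dx=\io|\nabla^2u|^2dx+\int_{\po}\mathrm{II}(\nabla u,\nabla u)\,dS,
\end{equation}
and a density argument, approximating $u$ in $W^{2,2}(\Omega)$ by $C^3(\overline\Omega)$ functions carrying the same Neumann data, extends \eqref{sk-id} to all admissible $u$.

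Part (i) is then immediate: for convex $\Omega$ the outward second fundamental form is positive semidefinite (as one checks on a ball), so the boundary integral in \eqref{sk-id} is nonnegative and $\io(\Delta u)^2dx\geq\io|\nabla^2u|^2dx$.

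For part (ii), the $C^2$ regularity of $\po$ gives $|\mathrm{II}(\nabla u,\nabla u)|\leq C_0|\nabla u|^2$ on $\po$ with $C_0$ depending only on $N$, $\Omega$ and the smoothness of the boundary; hence \eqref{sk-id} yields $\io|\nabla^2u|^2dx\leq\io(\Delta u)^2dx+C_0\int_{\po}|\nabla u|^2dS$. I would then invoke the sharp trace--interpolation estimate: for every $\delta>0$ there is $C_\delta=C_\delta(N,\Omega)$ with $\int_{\po}|\nabla u|^2dS\leq\delta\io|\nabla^2u|^2dx+C_\delta\io|\nabla u|^2dx$ (apply the trace bound $\|v\|_{L^2(\po)}^2\leq C\|v\|_{L^2(\Omega)}\|v\|_{W^{1,2}(\Omega)}$ componentwise to $v=\partial_iu$ and use Young's inequality). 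Choosing $\delta$ so that $C_0\delta\leq\tfrac12$ absorbs $\tfrac12\io|\nabla^2u|^2dx$ into the left side and, after renaming the constant, produces \eqref{2.2}.

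I expect the main obstacle to be the careful bookkeeping in passing from \eqref{sk-bt} to \eqref{sk-id}: one must use the geometry of $\po$ and the Neumann condition in just the right way to identify the curvature term, and then carry out the density step so that the boundary condition survives in the limit. For part (ii) the delicate point is instead the absorption, which works only because the trace inequality can be stated with an arbitrarily small constant in front of the top-order norm. An alternative, equally routine route for (ii) is a partition-of-unity argument: in the interior $\io(\Delta u)^2=\io|\nabla^2u|^2$ exactly, while near the boundary one flattens $\po$ via the $C^2$ chart and reduces to the half-space, the error terms being of lower order and absorbable into $\io|\nabla u|^2$. Both arguments are classical; see Grisvard, \emph{Elliptic Problems in Nonsmooth Domains}.
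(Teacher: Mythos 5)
The paper does not prove this lemma at all; it is quoted as a known result with a pointer to \cite{X3}, so there is no ``paper proof'' to compare against. Your argument is the standard classical one (essentially what underlies the cited reference and Grisvard's treatment), and it is correct in substance: the double integration by parts, the identification of the surviving boundary term with $\int_{\po}\mathrm{II}(\nabla u,\nabla u)\,dS$ via tangential differentiation of the Neumann condition, the sign of the second fundamental form in the convex case, and the trace--interpolation absorption for part (ii) are all the right steps, and part (ii) in particular is complete as written.

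One point deserves more care than your sketch gives it: part (i) is stated for an arbitrary bounded \emph{convex} domain, with no smoothness hypothesis on $\po$, whereas your identity \eqref{sk-id} requires $\po$ to be at least $C^{1,1}$ for the second fundamental form to be defined pointwise and for the density argument (approximation of $u$ in $W^{2,2}$ by smooth functions with the same Neumann data, and the trace of $\nabla^2 u$ on $\po$) to go through. For a nonsmooth convex domain -- a polyhedron, say -- one must instead approximate the \emph{domain} by smooth convex domains from inside (or invoke the convexity inequality of Grisvard directly), and then pass to the limit in the resulting inequality; the curvature term is discarded before the limit, which is why only the inequality, not the identity, survives. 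This is a known classical subtlety rather than a fatal flaw, but as written your proof of (i) only covers convex domains with $C^{1,1}$ boundary. Part (ii), which is the only part actually used in the paper (via \eqref{2.2}), assumes $C^2$ boundary and is fully justified by your argument.
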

	We refer the reader to \cite{X3} for some background information on this lemma. 
	%We only mention that (iii) first appeared in \cite{GST}.
	
		Our existence theorem is based upon the following fixed point theorem, which is often called the Leray-Schauder Theorem (\cite{GT}, p.280).
		\begin{lem}
			Let $B$ be a map from a Banach space $\mathcal{B}$ into itself. Assume:
			\begin{enumerate}
				\item[\textup{(LS1)}] $B$ is continuous;
				\item[\textup{(LS2)}] the images of bounded sets of $B$ are precompact;
				\item[\textup{(LS3)}] there exists a constant $c$ such that
				$$\|z\|_{\mathcal{B}}\leq c$$
				for all $z\in\mathcal{B}$ and $\sigma\in[0,1]$ satisfying $z=\sigma B(z)$.
			\end{enumerate}
			Then $B$ has a fixed point.
		\end{lem}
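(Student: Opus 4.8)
The plan is to reduce the statement to Schauder's fixed point theorem by truncating the map $B$ onto a sufficiently large closed ball. First I would use (LS3) to fix a constant $c$ such that every $z \in \mathcal{B}$ satisfying $z = \sigma B(z)$ for some $\sigma \in [0,1]$ obeys $\|z\|_{\mathcal{B}} \le c$, and then set $R = c+1$ and $K = \{z \in \mathcal{B} : \|z\|_{\mathcal{B}} \le R\}$, a nonempty, closed, bounded, convex subset of $\mathcal{B}$. Next I would introduce the radial retraction $r : \mathcal{B} \to K$, defined by $r(y) = y$ when $\|y\|_{\mathcal{B}} \le R$ and $r(y) = R\,y/\|y\|_{\mathcal{B}}$ when $\|y\|_{\mathcal{B}} > R$; one checks directly that $r$ is continuous and that $\|r(y)\|_{\mathcal{B}} = R$ whenever $\|y\|_{\mathcal{B}} > R$.

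The second step is to consider the composition $T = r \circ B : K \to K$. Since $B$ is continuous by (LS1) and $r$ is continuous, $T$ is continuous. Moreover $B(K)$ is precompact by (LS2), because $K$ is bounded, and the continuous image $r(B(K))$ of a precompact set is precompact, so $T(K)$ is precompact. Thus $T$ is a continuous map of the nonempty closed bounded convex set $K$ into a precompact subset of itself, and Schauder's fixed point theorem produces a point $z_0 \in K$ with $T(z_0) = z_0$, that is, $r(B(z_0)) = z_0$.

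The final step is to upgrade $z_0$ to an honest fixed point of $B$ by ruling out the truncation. If $\|B(z_0)\|_{\mathcal{B}} \le R$, then by the definition of $r$ we get $z_0 = r(B(z_0)) = B(z_0)$ and we are done. If instead $\|B(z_0)\|_{\mathcal{B}} > R$, then $z_0 = r(B(z_0)) = \sigma B(z_0)$ with $\sigma = R/\|B(z_0)\|_{\mathcal{B}} \in (0,1)$, so (LS3) forces $\|z_0\|_{\mathcal{B}} \le c$; but in this case $\|z_0\|_{\mathcal{B}} = \|r(B(z_0))\|_{\mathcal{B}} = R = c+1 > c$, a contradiction. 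Hence the second case cannot occur and $B(z_0) = z_0$.

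The only nontrivial ingredient is Schauder's theorem, which I would simply quote from a standard reference; everything else is bookkeeping with the retraction. The point that deserves a moment's care is the role of the a priori bound (LS3): it is used \emph{solely} to exclude the boundary case $\|B(z_0)\|_{\mathcal{B}} > R$, i.e. it is precisely what prevents the Schauder fixed point of $T$ from sitting on the sphere $\|z\|_{\mathcal{B}} = R$, where $T$ and $B$ would disagree.
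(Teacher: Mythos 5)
Your proof is correct, and it is essentially the standard argument from the reference the paper cites for this lemma (Gilbarg--Trudinger, Theorem 11.3): truncate $B$ by the radial retraction onto a ball of radius $R=c+1$, apply Schauder's fixed point theorem to the compact map $T=r\circ B$, and use the a priori bound (LS3) to rule out the case $\|B(z_0)\|_{\mathcal{B}}>R$. The paper itself offers no proof and simply quotes the result, so there is nothing to compare beyond noting that your argument reproduces the cited one faithfully.
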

	
	Relevant interpolation inequalities for Sobolev spaces are listed in the following lemma.
	\begin{lem}\label{l23}	Let $\Omega$ be a bounded domain in $\mathbb{R}^N$. Then we have:
		\begin{enumerate}
			\item $ \|f\|_{q,\Omega}\leq\varepsilon\|f\|_{r,\Omega}+\varepsilon^\sigma \|f\|_{p,\Omega}$, where
			$\varepsilon>0, p\leq q< r$, and $\sigma=\left(\frac{1}{p}-\frac{1}{q}\right)/\left(\frac{1}{q}-\frac{1}{r}\right)$;
			\item If $\po$ is $C^2$, for each $\varepsilon >0$ and each $p\in [2, 2^*)$, where $2^*=\frac{2N}{N-2}$ if $N>2$ and any number bigger than $2$ if $N=2$, there is a positive number $c=c(\varepsilon, p)$ such that
			\begin{eqnarray}
			\|f\|_{p,\Omega}&\leq &\varepsilon\|\nabla f\|_{2,\Omega}+c\|f\|_{1,\Omega}\ \ \mbox{for all $f\in
				W^{1,2}(\Omega)$},\nonumber\\	
			\|\nabla g\|_{p,\Omega}&\leq &\varepsilon\|\nabla^2 g\|_{2,\Omega}+c\|g\|_{1,\Omega}\ \ \mbox{for all $g\in
				W^{2,2}(\Omega)$}.	\nonumber
			\end{eqnarray}
		\end{enumerate}
	\end{lem}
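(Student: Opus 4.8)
The plan is to deduce both parts from two classical facts, in each case completed by Young's inequality: for (1), the multiplicative (log--convexity) interpolation inequality for Lebesgue norms; for (2), the Gagliardo--Nirenberg--Sobolev interpolation inequalities on a bounded domain.

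\emph{Part (1).} The case $p=q$ is trivial, so suppose $p<q<r$. Choose $\lambda\in(0,1)$ with $\frac1q=\frac{1-\lambda}{p}+\frac{\lambda}{r}$; solving gives $\lambda=(1/p-1/q)/(1/p-1/r)$, and therefore $\frac{\lambda}{1-\lambda}=\sigma$. A direct application of H\"older's inequality, splitting $|f|^{q}=|f|^{q(1-\lambda)}|f|^{q\lambda}$ with conjugate exponents $\frac{p}{q(1-\lambda)}$ and $\frac{r}{q\lambda}$, gives the multiplicative bound $\|f\|_{q,\Omega}\le\|f\|_{p,\Omega}^{1-\lambda}\|f\|_{r,\Omega}^{\lambda}$. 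Since $0<\lambda<1$, Young's inequality applied to the product on the right, with a parameter calibrated to $\varepsilon$, converts this into an estimate $\|f\|_{q,\Omega}\le\varepsilon\|f\|_{r,\Omega}+C(\varepsilon)\|f\|_{p,\Omega}$; the dependence of $C$ on $\varepsilon$ is the power of $\varepsilon$ forced by the invariance under $f\mapsto tf$, which is (1).

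\emph{Part (2).} Here I would invoke the Gagliardo--Nirenberg interpolation inequalities on $\Omega$; these hold on a bounded domain whose boundary is $C^2$ (indeed $C^1$ suffices) by composing the inequalities on $\mathbb{R}^N$ with a bounded Sobolev extension operator. For the first line, the hypothesis $2\le p<2^*$ guarantees the existence of an exponent $\theta\in[0,1)$, determined by the dimensional balance $\frac1p=\theta\big(\frac12-\frac1N\big)+(1-\theta)$, such that $\|f\|_{p,\Omega}\le C\|\nabla f\|_{2,\Omega}^{\theta}\|f\|_{1,\Omega}^{1-\theta}+C\|f\|_{1,\Omega}$; crucially, $\theta<1$ is precisely equivalent to $p<2^*$. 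Applying Young's inequality to the product term, with exponents $\frac1\theta$ and $\frac1{1-\theta}$, absorbs it into $\varepsilon\|\nabla f\|_{2,\Omega}+C(\varepsilon)\|f\|_{1,\Omega}$, which is the first inequality in (2). The second line follows by applying the first line to each first-order partial derivative of $g$, giving $\|\nabla g\|_{p,\Omega}\le\varepsilon\|\nabla^2 g\|_{2,\Omega}+C\|\nabla g\|_{1,\Omega}$, and then disposing of $\|\nabla g\|_{1,\Omega}$ by the elementary interpolation $\|\nabla g\|_{1,\Omega}\le\eta\|\nabla^2 g\|_{1,\Omega}+C(\eta)\|g\|_{1,\Omega}\le\eta\,c\,\|\nabla^2 g\|_{2,\Omega}+C(\eta)\|g\|_{1,\Omega}$ (the last step by H\"older on the bounded set $\Omega$) with $\eta$ chosen small; alternatively one may apply Gagliardo--Nirenberg directly to bound $\|\nabla g\|_{p,\Omega}$ by $C\|\nabla^2 g\|_{2,\Omega}^{\theta}\|g\|_{1,\Omega}^{1-\theta}+C\|g\|_{1,\Omega}$ with $\theta\in[\tfrac12,1)$, valid exactly for $\tfrac43\le p<2^*$ --- automatically satisfied since $p\ge2$ and $N\in\{2,3\}$ --- and finish with Young's inequality.

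\emph{Main obstacle.} The argument is routine; the one point that genuinely has to be checked is that Young's inequality is allowed to move the highest-order norm onto the side carrying the small coefficient only because its interpolation exponent is strictly less than $1$, and this strictness is exactly the content of the restriction $p<2^*$ (and, for the gradient version, of $p\ge\tfrac43$, which holds here for free). The only real technical ingredient is the passage from $\mathbb{R}^N$ to the bounded domain $\Omega$, which uses an extension operator and hence the $C^2$ regularity of $\partial\Omega$.
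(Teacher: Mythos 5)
The paper states this lemma without proof---it is offered as a list of standard interpolation facts---so there is no argument of the authors' to measure yours against; the question is only whether your proof is correct, and it essentially is. Part (1) via the log-convexity bound $\|f\|_{q,\Omega}\le\|f\|_{p,\Omega}^{1-\lambda}\|f\|_{r,\Omega}^{\lambda}$ (H\"older with the exponents you specify) followed by Young, and part (2) via Gagliardo--Nirenberg on a bounded domain (through a Sobolev extension operator, for which $C^2$ regularity of $\partial\Omega$ is more than enough) followed by Young, with the observation that $p<2^*$ is exactly the condition making the interpolation exponent $\theta$ strictly less than $1$, is the standard route. Your reduction of the second inequality in (2) to the first one applied to each $\partial_i g$, followed by $\|\nabla g\|_{1,\Omega}\le\eta\|\nabla^2 g\|_{1,\Omega}+C(\eta)\|g\|_{1,\Omega}$ and H\"older on the bounded set $\Omega$, is also correct, and your exponent bookkeeping ($\theta=1$ exactly at $p=2^*$; $\theta=1/2$ at $p=4/3$ for the gradient version) checks out.

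Two small points. First, your computation actually yields the coefficient of $\|f\|_{p,\Omega}$ as $\varepsilon^{-\sigma}$, not $\varepsilon^{+\sigma}$: writing $\|f\|_{p,\Omega}^{1-\lambda}\|f\|_{r,\Omega}^{\lambda}=\left(\varepsilon\|f\|_{r,\Omega}\right)^{\lambda}\left(\varepsilon^{-\lambda/(1-\lambda)}\|f\|_{p,\Omega}\right)^{1-\lambda}$ and applying Young gives $\|f\|_{q,\Omega}\le\varepsilon\|f\|_{r,\Omega}+\varepsilon^{-\sigma}\|f\|_{p,\Omega}$ with $\sigma=\lambda/(1-\lambda)>0$. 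The statement as printed in the paper, with $\varepsilon^{+\sigma}$, is false as written (test it on $f\equiv 1$ and let $\varepsilon\to 0$); it is evidently a sign typo, and the paper only ever invokes the inequality in the harmless form $\varepsilon\|\cdot\|_{r,\Omega}+c(\varepsilon)\|\cdot\|_{p,\Omega}$. You should state the sign explicitly rather than leave it implicit. Second, your heuristic that the power of $\varepsilon$ is ``forced by the invariance under $f\mapsto tf$'' is not right: that rescaling multiplies both sides by $t$ for \emph{any} candidate power, so it constrains nothing. What pins down the exponent $-\sigma$ is the choice of the Young parameter, i.e., the requirement that $\varepsilon^{\lambda}\cdot(\varepsilon^{s})^{1-\lambda}$ be of order one, forcing $s=-\lambda/(1-\lambda)$. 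Neither point affects the validity of the argument.
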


Finally, we collect a few frequently used elementary inequalities in the following lemma.
	\begin{lem}\label{l24} For $x,y\in \mathbb{R}^N$, $s,t\in \mathbb{R}$, and $\ a, b\in (0,\infty)$, we have:
		\begin{enumerate}
			\item[(3)] $x\cdot(x-y)\geq \frac{1}{2}(|x|^2-|y|^2);$
			\item[(4)] if $f$ is an increasing function on $\mathbb{R}$ and $F$ an anti-derivative of $f$, then
			$$f(s)(s-t)\geq F(s)-F(t).
			%\ \ \mbox{ for all $s,t\in \mathbb{R}$}.
			$$
			In particular, there hold the inequalities
			%we have
			\begin{eqnarray}
			a(\ln a-\ln b)&\geq& a-b\  \ \ \mbox{and}\label{ota1}\\
			 (a-b)\ln a&\geq&a\ln a-b\ln b-(\ln a-\ln b) ;
			%\ \ \mbox{ for all $s,t\in \mathbb{R}$.}
			\label{om1}
			\end{eqnarray}
			\item[(5)] we have
			%There holds
			\begin{equation}
			(a-b)(\ln a-\ln b)\geq 2\left(\sqrt{a}-\sqrt{b}\right)^2;
			%\ \mbox{ for all $s,t\in \mathbb{R}$.}
			\label{om2}
			\end{equation}
			\item[(6)] there hold
			%Let $a, b\in \mathbb{R}^+$. Then we have
			\begin{eqnarray*}
			(a+b)^\alpha&\leq&a^\alpha+b^\alpha\ \ \mbox{if $0<\alpha\leq 1$},\\
			(a+b)^\alpha&\leq&2^{\alpha-1}(a^\alpha+b^\alpha)\ \ \mbox{if $\alpha>1$},\\
			ab&\leq& \varepsilon a^p+\frac{1}{\varepsilon^{q/p}}b^q \ 
			\mbox{if $\varepsilon>0,\, p, \, q>1$ with $\frac{1}{p}+\frac{1}{q}=1$}.
			\end{eqnarray*}
		\end{enumerate}
	\end{lem}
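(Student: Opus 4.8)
The plan is to verify the four items one at a time; each is elementary, resting on either convexity or the completion of a square, so I would handle (3) and (4) directly, then derive (5) from (4), and finally dispatch (6). For (3) I would just expand the difference of the two sides: $x\cdot(x-y)-\frac12(|x|^2-|y|^2)$ equals $\frac12|x-y|^2\ge 0$. For the general inequality in (4), I would use the supporting‑line characterization of convexity: since $f$ is increasing, its anti‑derivative $F$ is convex, so $F(t)\ge F(s)+f(s)(t-s)$ for all $s,t$, which rearranges to $f(s)(s-t)\ge F(s)-F(t)$ (equivalently, one writes $F(s)-F(t)=\int_t^s f(r)\,dr$ and bounds the integrand using monotonicity of $f$). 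To obtain \eqref{ota1} I would apply this with $f(r)=e^r=F(r)$ and the substitution $s=\ln a$, $t=\ln b$; to obtain \eqref{om1} I would apply it with $f(r)=\ln r$, $F(r)=r\ln r-r$, and $s=a$, $t=b$.

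For (5), I would first observe that the claimed inequality is invariant under $(a,b)\mapsto(\lambda a,\lambda b)$ for $\lambda>0$, so one may assume $b=1$ and, writing $a=t^2$, reduce it to $(t^2-1)\ln t\ge (t-1)^2$; dividing by $t-1$ and tracking its sign for $t<1$ and $t>1$, this becomes the comparison of $\ln t$ with $\frac{t-1}{t+1}$, and the function $t\mapsto\frac{t-1}{t+1}-\ln t$ is strictly decreasing on $(0,\infty)$ with its only zero at $t=1$. A shorter route, which I would probably prefer, is to deduce (5) directly from \eqref{ota1}: assuming without loss of generality $\sqrt a>\sqrt b$, one adds $\sqrt a(\ln\sqrt a-\ln\sqrt b)\ge\sqrt a-\sqrt b$ to the trivial bound $\sqrt b(\ln\sqrt a-\ln\sqrt b)\ge 0$, then multiplies through by $2(\sqrt a-\sqrt b)>0$ and recognizes the two sides as $(a-b)(\ln a-\ln b)$ and $2(\sqrt a-\sqrt b)^2$.

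For (6), the first inequality is subadditivity of $t\mapsto t^\alpha$: by homogeneity one may normalize $a+b=1$, and then $a^\alpha\ge a$ and $b^\alpha\ge b$ because $\alpha\le 1$. The second is convexity of $t\mapsto t^\alpha$ for $\alpha>1$: Jensen gives $\big(\tfrac{a+b}{2}\big)^\alpha\le\tfrac12(a^\alpha+b^\alpha)$, hence $(a+b)^\alpha\le 2^{\alpha-1}(a^\alpha+b^\alpha)$. The third is the parametrized Young inequality: writing $ab=(\delta a)(\delta^{-1}b)$ and applying $xy\le x^p/p+y^q/q$, one chooses $\delta=(\varepsilon p)^{1/p}$ so that the first term becomes exactly $\varepsilon a^p$; the coefficient of $b^q$ is then $p^{-q/p}q^{-1}\varepsilon^{-q/p}$, which is at most $\varepsilon^{-q/p}$ since $p>1$ forces $p^{-q/p}<1<q$. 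There is no genuine obstacle in this lemma; the only points that need a moment's care are the sign bookkeeping in the direct argument for (5) and the estimate on the $b^q$‑coefficient in Young's inequality, and both are sidestepped — the former by reducing (5) to \eqref{ota1}, the latter by the bound $p^{-q/p}<1<q$.
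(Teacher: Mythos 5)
Your verification is correct and is the standard elementary argument; the paper itself offers no proof of this lemma, deferring instead to \cite{LX}, so there is no in-paper proof to compare against. One point deserves to be made explicit, though. Applying item (4) with $f(r)=\ln r$, $F(r)=r\ln r-r$, $s=a$, $t=b$, as you do, yields $(a-b)\ln a\ge a\ln a-b\ln b-(a-b)$, whose final term is $(a-b)$, not $(\ln a-\ln b)$ as printed in \eqref{om1}. This is not a defect in your reasoning but a typo in the statement: the printed inequality is equivalent to $(1-b)(\ln a-\ln b)\ge 0$, which fails for instance at $a=4$, $b=2$, whereas the version you actually derive is the true one and is precisely what the paper invokes later (in the proof of Lemma \ref{l34} the term subtracted is $\tau\int_\Omega(\rho_k-\rho_{k-1})\,dx$, i.e.\ the $-(a-b)$ correction, not $\tau\int_\Omega(\ln\rho_k-\ln\rho_{k-1})\,dx$). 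So you should state that you are proving the corrected form rather than claiming to obtain \eqref{om1} verbatim. Everything else checks out: (3) by completing the square, \eqref{ota1} from (4) with $f=F=\exp$ at $s=\ln a$, $t=\ln b$, your short derivation of \eqref{om2} by adding $\sqrt{a}(\ln\sqrt{a}-\ln\sqrt{b})\ge\sqrt{a}-\sqrt{b}$ to $\sqrt{b}(\ln\sqrt{a}-\ln\sqrt{b})\ge 0$ and multiplying by $2(\sqrt{a}-\sqrt{b})$, and the three inequalities in (6) by normalization, convexity of $t\mapsto t^\alpha$, and the scaled Young inequality with $\delta=(\varepsilon p)^{1/p}$, where the coefficient bound $p^{-q/p}q^{-1}<1$ is as you say.
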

The proof of the lemma is also rather elementary. We refer the reader to \cite{LX} for details.
 \begin{lem}\label{ynb}
	Let $\{y_n\}, n=0,1,2,\cdots$, be a sequence of positive numbers satisfying the recursive inequalities
	\begin{equation}
		y_{n+1}\leq cb^ny_n^{1+\alpha}\ \ \mbox{for some $b>1, c, \alpha\in (0,\infty)$.}\nonumber
	\end{equation}
	If
	\begin{equation*}
		y_0\leq c^{-\frac{1}{\alpha}}b^{-\frac{1}{\alpha^2}},
	\end{equation*}
	then $\lim_{n\rightarrow\infty}y_n=0$.
\end{lem}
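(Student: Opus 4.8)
The plan is to prove, by induction on $n$, the quantitative bound
\begin{equation}
	y_n\leq b^{-n/\alpha}y_0\qquad\mbox{for all }n\geq 0,\nonumber
\end{equation}
from which the conclusion is immediate: since $b>1$ and $\alpha>0$, the factor $b^{-n/\alpha}$ tends to $0$, hence so does $y_n$.

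The base case $n=0$ is an equality. For the inductive step I would assume $y_n\leq b^{-n/\alpha}y_0$, insert this into the hypothesis $y_{n+1}\leq cb^ny_n^{1+\alpha}$, and collect the powers of $b$:
\begin{equation}
	y_{n+1}\leq cb^n\left(b^{-n/\alpha}y_0\right)^{1+\alpha}=c\,b^{\,n-n(1+\alpha)/\alpha}\,y_0^{1+\alpha}=\left(cy_0^{\alpha}\right)b^{-n/\alpha}y_0.\nonumber
\end{equation}
Now the smallness assumption $y_0\leq c^{-1/\alpha}b^{-1/\alpha^2}$, raised to the power $\alpha$, is precisely $cy_0^{\alpha}\leq b^{-1/\alpha}$, so the right-hand side is at most $b^{-(n+1)/\alpha}y_0$. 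This closes the induction, and letting $n\to\infty$ finishes the proof.

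The only step that requires a moment's thought is identifying the correct form of the inductive ansatz: one must recognize that $b^{-1/\alpha}$ is exactly the per-step contraction factor that absorbs the growing coefficient $b^{n}$ in the recursion, and that the threshold on $y_0$ in the statement is precisely the condition under which the induction is self-sustaining. An equivalent bookkeeping is to set $z_n=b^{n/\alpha}y_n$, which converts the recursion into the constant-coefficient form $z_{n+1}\leq\left(cb^{1/\alpha}\right)z_n^{1+\alpha}$; the hypothesis then reads $\left(cb^{1/\alpha}\right)y_0^{\alpha}\leq 1$, so $\{z_n\}$ stays bounded by $y_0$ while $y_n=b^{-n/\alpha}z_n\to 0$. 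Beyond this, there is no real obstacle — the statement is the classical fast-iteration (De Giorgi-type) lemma, and the argument is essentially three lines once the ansatz is in place.
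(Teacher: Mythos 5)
Your induction is correct: the exponent bookkeeping $n-n(1+\alpha)/\alpha=-n/\alpha$ checks out, and the hypothesis $y_0\leq c^{-1/\alpha}b^{-1/\alpha^2}$ raised to the power $\alpha$ gives exactly the needed inequality $cy_0^\alpha\leq b^{-1/\alpha}$ to close the step. The paper does not reproduce a proof but cites DiBenedetto (p.~12), whose argument is precisely this induction on the ansatz $y_n\leq b^{-n/\alpha}y_0$, so your approach is the same as the intended one.
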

This lemma can be found in (\cite{D}, p.12).
\begin{lem}\label{wbd}Let $w\in W^{1,2}(\Omega)$ be a weak solution of the boundary value problem
	\begin{eqnarray}
		-\Delta w&=&f\ \ \mbox{in $\Omega$,}\label{rec1}\\
		\nabla w\cdot\nu&=&0\ \ \mbox{on $\po$.}\label{rec2}
	\end{eqnarray}
Then for each $p>\frac{N}{2}$ there is a positive number $c=c(N, p, \Omega)$ such that
\begin{equation}
	\|w\|_{\infty,\Omega}\leq c\|w\|_{1,\Omega}+c\|f\|_{p,\Omega}.\nonumber
\end{equation}
\end{lem}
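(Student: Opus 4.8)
\textbf{Proof plan for Lemma \ref{wbd}.} The statement is the classical $L^\infty$ bound for weak solutions of the Neumann problem $-\Delta w = f$, with the $\|w\|_{1,\Omega}$ term on the right accounting for the fact that the solution is only determined up to an additive constant. The plan is to run a standard De Giorgi / Stampacchia truncation argument. First I would reduce to the case of zero average: writing $\bar w = \frac{1}{|\Omega|}\int_\Omega w\,dx$, the function $v = w - \bar w$ is still a weak solution of \eqref{rec1}-\eqref{rec2}, so it suffices to bound $\|v\|_{\infty,\Omega}$, since $\|w\|_{\infty,\Omega} \le \|v\|_{\infty,\Omega} + |\bar w| \le \|v\|_{\infty,\Omega} + |\Omega|^{-1}\|w\|_{1,\Omega}$.

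Next, for $k \ge 0$ I would test the weak formulation of $-\Delta v = f$ with $(v-k)^+$ (which has zero normal derivative compatibility built in since the boundary term drops out under the Neumann condition), obtaining
\begin{equation}
\int_{A_k} |\nabla v|^2\,dx = \int_{A_k} f (v-k)^+\,dx, \qquad A_k = \{x\in\Omega : v(x) > k\}.\nonumber
\end{equation}
Applying Hölder's inequality with exponent $p > N/2$ on the right, the Sobolev (or Sobolev–Poincaré) inequality on the left, and Hölder again to absorb the resulting $L^{2^*}$ norm of $(v-k)^+$, I would arrive at a recursive estimate of the form
\begin{equation}
\int_{A_h}(v-h)^2\,dx \le \frac{C\|f\|_{p,\Omega}^2}{(h-k)^{2\gamma}}\,|A_k|^{1+\delta}\nonumber
\end{equation}
for $h > k$, with exponents $\gamma, \delta > 0$ coming from the gap between $2p'$ and $2^*$ (this is where the hypothesis $p > N/2$ is used, to guarantee $\delta > 0$). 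The same computation also yields the starting bound $\int_{A_0}(v-0)^2 \le C\|f\|_{p,\Omega}^2 |\Omega|^{1+\delta}$ or one controlled by $\|v\|_{2,\Omega}$, which in turn is controlled by $\|v\|_{1,\Omega} \le 2\|w\|_{1,\Omega}$ via interpolation and Sobolev–Poincaré.

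I would then invoke Stampacchia's iteration lemma (the real-variable lemma asserting that a nonincreasing function $\varphi(k)$ satisfying $\varphi(h) \le \frac{C}{(h-k)^\gamma}\varphi(k)^{1+\delta}$ for $h>k$ vanishes beyond a level $k_\infty$ that is explicitly bounded in terms of $C$, $\delta$, $\gamma$, and $\varphi(0)$) to conclude that $v \le k_\infty$ a.e., with $k_\infty \le c\big(\|v\|_{1,\Omega} + \|f\|_{p,\Omega}\big) \le c\big(\|w\|_{1,\Omega} + \|f\|_{p,\Omega}\big)$; applying the same argument to $-v$ gives the matching lower bound. The main obstacle — really the only non-routine point — is bookkeeping the exponents carefully so that the power of $|A_k|$ in the recursion is strictly larger than $1$, which forces the condition $p > N/2$, and then tracking how $\|f\|_{p,\Omega}$ and $\|w\|_{1,\Omega}$ enter the final constant $k_\infty$ linearly rather than with some other power. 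Since this is a well-known result, I would likely just cite a standard reference (e.g. \cite{GT}) for the Stampacchia lemma and present the truncation step, noting that the only adaptation needed here is to start from $v = w - \bar w$ so that the $L^1$ norm of $w$ appears.
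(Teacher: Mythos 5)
Your plan is correct and follows essentially the same route as the paper: a level-set truncation/iteration argument in which the hypothesis $p>\frac{N}{2}$ supplies the exponent gap $\delta>0$ and the $L^1$ norm of $w$ enters through the choice of the starting level. The paper's only cosmetic differences are that it subtracts the mean of the truncation $(w-k_{n+1})^+$ from the test function (rather than normalizing $w$ itself to have zero average) and closes the argument with a De Giorgi dyadic-level iteration via the fast geometric convergence lemma (Lemma \ref{ynb}) instead of Stampacchia's level-set lemma.
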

This result is well known. Since the proof is rather simple, we reproduce it here.
\begin{proof}Without loss of generality, assume that
	$$\|w\|_{\infty,\Omega}=\|w^+\|_{\infty,\Omega}.$$
We will only focus on the case $$N=2.$$	Let $p$ be given as in the lemma, and
let 
\begin{equation}\label{kcon}
	K>\|f\|_{p,\Omega}
\end{equation} be selected as below. Set
$$k_n=K-\frac{K}{2^{n}},\ \ y_n=\frac{1}{|\Omega|}\io\left(w-k_n\right)^+dx,\ \ n=0,1,2,\cdots.$$
Use $(w-k_{n+1})^+-y_{n+1}$ as a test function in \eqref{rec1} to get
\begin{eqnarray}
	\io\left|\nabla(w-k_{n+1})^+\right|^2dx&=&\io f\left[(w-k_{n+1})^+-y_{n+1}\right]dx.\label{key10}
\end{eqnarray}
For each $s>1$ we obtain from the Sobolev inequality that
\begin{eqnarray}
\io f\left[(w-k_{n+1})^+-y_{n+1}\right]dx	&\leq&\|f\|_{\frac{s}{s-1},\{w\geq k_{n+1}\}}\|(w-k_{n+1})^+-y_{n+1}\|_{s,\Omega}\nonumber\\
	&\leq &cK\left|\{w\geq k_{n+1}\}\right|^{\frac{s-1}{s}-\frac{1}{p}}\|\nabla(w-k_{n+1})^+\|_{\frac{2s}{2+s},\Omega}\nonumber\\
	&\leq &cK\left|\{w\geq k_{n+1}\}\right|^{1-\frac{1}{p}}\|\nabla(w-k_{n+1})^+\|_{2,\Omega}.\nonumber
	%	&\equiv&\frac{2^{n+2}}{\alpha}D(K)y_n^{\frac{1}{p}}.
	%\left(\int_{\{w>\frac{K}{2}\}}(1+|f|)^{\frac{p}{p-1}}dx\right)^{\frac{p-1}{p}}\|1+f\|_{\frac{p}{p-1},\{w>\frac{K}{2}\}}	&\leq&\frac{1}{4}y_n^{\frac{1}{2p}}\left|\{w\geq k_{n+1}\}\right|^{1-\frac{1}{2p}}+2^{n+2}\|w\|_{\infty, \Omega}\left(\int_{\{w>\frac{K}{2}\}}|\pt u|^{\frac{p}{p-1}}dx\right)^{\frac{p-1}{p}}y_n^{\frac{1}{p}}\nonumber\\
	%	&\leq&\frac{2^{(n+2)(2p-1)}}{4K^{2p-1}}y_n+2^{n+2}\|w\|_{\infty, \Omega}\|\pt u\|_{2,\Omega}\left|\left\{w>\frac{K}{2}\right\}\right|^{\frac{p-1}{p}-\frac{1}{2}}y_n^{\frac{1}{p}}.y_n
\end{eqnarray}%It immediately follows that
Use this in \eqref{key10} to get
\begin{equation}
	\|\nabla(w-k_{n+1})^+\|_{2,\Omega}\leq cK\left|\{w\geq k_{n+1}\}\right|^{1-\frac{1}{p}}.\nonumber
\end{equation}
With this and the Sobolev inequality in mind, we deduce that
\begin{eqnarray}
	y_{n+1}&\leq&\left(\io\left[\left(w-k_{n+1}\right)^+\right]^{s}dx\right)^{\frac{1}{s}}\left|\{w\geq k_{n+1}\}\right|^{1-\frac{1}{s}}\nonumber\\
	&\leq&c	\left(\left(\io\left|\nabla\left(w-k_{n+1}\right)^+\right|^{\frac{2s}{s+2}}dx\right)^{\frac{s+2}{2s}}+\io\left(w-k_{n+1}\right)^+dx\right)\left|\{w\geq k_{n+1}\}\right|^{\frac{s-1}{s}}\nonumber\\
	&\leq&c	\left(\left\|\nabla\left(w-k_{n+1}\right)^+\right\|_{2,\Omega}\left|\{w\geq k_{n+1}\}\right|^{\frac{1}{s}}+\io\left(w-k_{n+1}\right)^+dx\right)\left|\{w\geq k_{n+1}\}\right|^{\frac{s-1}{s}}\nonumber\\
	&\leq&c\left(K \left|\{w\geq k_{n+1}\}\right|^{1+\frac{1}{s}-\frac{1}{p}}+y_n\right)\left|\{w\geq k_{n+1}\}\right|^{\frac{s-1}{s}}.\label{key3}
	%\nonumber\\
%	&\leq&c4^{n}\|1+|f|\|_{2,\Omega}^2y_n\left|\{w\geq k_{n+1}\}\right|^{\frac{N+2}{2N}}+cy_n\left|\{w\geq k_{n+1}\}\right|^{\frac{N+2}{2N}}\nonumber\\
%	&\leq &\frac{cb^n\left(\|1+|f|\|_{2,\Omega}^2+1\right)}{K^{\frac{4}{3}}}y_n^{1+\frac{N+2}{2N}},\ \ \  \  b=4 (16)^{\frac{N+2}{2N}}>1.
	%&\leq&\frac{c4^{\frac{p(n+2)(N-(N-2)p)}{N}}}{K^{\frac{2p(N-(N-2)p)}{N}}}y_n^{1-\frac{(N-2)p}{N}}\nonumber\\
	%	&\leq& \frac{cb^n}{K^{\frac{2p(N-(N-2)p)}{N}}}
\end{eqnarray}
We easily see that
\begin{equation}
	y_n\geq\frac{1}{|\Omega|}\int_{\{w\geq k_{n+1}\}}\left(w-k_n\right)^+dx\geq \frac{K}{2^{n+1}|\Omega|}\left|\{w\geq k_{n+1}\}\right|.\nonumber
\end{equation}
Take
$$\alpha=\min\left\{1-\frac{1}{p},\frac{s-1}{s} \right\}.$$ 
Our assumption on $p$ implies 
$$\alpha>0.$$
We can obtain from \eqref{key3} that
\begin{eqnarray}
	y_{n+1}&\leq& cK\left|\{w\geq k_{n+1}\}\right|^{1+\alpha}+cy_n\left|\{w\geq k_{n+1}\}\right|^\alpha\nonumber\\
	&\leq&\frac{c2^{(1+\alpha)n}}{K^\alpha}y_n^{1+\alpha}.\nonumber
\end{eqnarray}
%A slight adjustment is needed if $N=2$.
 According to Lemma \ref{ynb}, if we choose $K$ so large that
$$y_0= \frac{1}{|\Omega|}\io w^+dx\leq cK,$$
then $w\leq K$. In view of\eqref{kcon}, we conclude
%This results in
\begin{eqnarray}
	w&\leq& c	\|w\|_{1,\Omega}+c	\|f\|_{p,\Omega}.\nonumber
\end{eqnarray}
\end{proof}
\begin{lem}\label{prop2.2}
	Let $h(\tau)$ be a continuous non-negative function defined on $[0, T_0]$ for some $T_0>0$. Suppose that there exist three positive numbers $\ep, \delta, b $ such that
	\begin{equation}\label{f22}
		h(\tau)\leq \ep h^{1+\delta}(\tau)+b\ \ \mbox{for each $\tau \in[0, T_0]$}.
	\end{equation}
	%	where  are all positive numbers.
	Then
	\begin{equation}
		h(\tau)\leq \frac{1}{[\ep(1+\delta)]^{\frac{1}{\delta}}}\equiv s_0
		%\frac{(b+\delta)(1+\delta)}{\delta}
		\ \ \mbox{for each $\tau\in [0, T_0]$},
	\end{equation}  provided that
	\begin{equation}\label{f1}
		\ep\leq \frac{\delta^\delta}{(b+\delta)^\delta(1+\delta)^{1+\delta}}\ \ \mbox{and} \ \ h(0)\leq
		%\frac{(b+\delta)(1+\delta)}{\delta}.
		s_0.
	\end{equation}
\end{lem}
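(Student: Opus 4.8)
The plan is to treat \eqref{f22} as a constraint on the continuous function $h$ and exploit the geometry of the one-variable function $\phi(s) = \epsilon s^{1+\delta} - s + b$ on $[0,\infty)$. First I would observe that \eqref{f22} is equivalent to $\phi(h(\tau)) \geq 0$ for every $\tau \in [0,T_0]$. So I would study the sign of $\phi$. Since $\phi(0) = b > 0$, $\phi'(s) = \epsilon(1+\delta)s^\delta - 1$ vanishes at the single critical point $s_* = [\epsilon(1+\delta)]^{-1/\delta}$, which is a global minimum of $\phi$ on $[0,\infty)$. Thus $\phi$ is strictly decreasing on $[0, s_*)$ and strictly increasing on $(s_*, \infty)$.

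The crux is then to show that, under the first hypothesis in \eqref{f1}, the minimum value $\phi(s_*)$ is strictly negative, i.e. $\phi$ actually dips below zero. A direct computation gives
\[
\phi(s_*) = \epsilon \cdot [\epsilon(1+\delta)]^{-\frac{1+\delta}{\delta}} - [\epsilon(1+\delta)]^{-\frac{1}{\delta}} + b = -\frac{\delta}{1+\delta}[\epsilon(1+\delta)]^{-\frac{1}{\delta}} + b,
\]
so $\phi(s_*) < 0$ is equivalent to $[\epsilon(1+\delta)]^{-\frac{1}{\delta}} > \frac{(1+\delta)b}{\delta}$, i.e. $\epsilon < \frac{\delta^\delta}{b^\delta(1+\delta)^{1+\delta}}$. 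The slightly weaker bound $\epsilon \leq \frac{\delta^\delta}{(b+\delta)^\delta(1+\delta)^{1+\delta}}$ assumed in \eqref{f1} is stronger than this, so in particular $\phi(s_*) < 0$ holds, and moreover one gets a definite gap: there exist $0 < s_1 < s_* < s_2$ with $\phi(s_1) = \phi(s_2) = 0$, $\phi < 0$ on $(s_1, s_2)$, and $\phi \geq 0$ precisely on $[0, s_1] \cup [s_2, \infty)$. (The role of the extra ``$+\delta$'' in the denominator is just to leave room so that $s_1$ can be bounded away from, or related to, $s_*$; I would check that it forces $s_1 \leq s_* = s_0$, which is what the conclusion asserts.) Hence \eqref{f22} forces $h(\tau) \in [0, s_1] \cup [s_2, \infty)$ for every $\tau$.

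Now I would invoke continuity and a connectedness (continuity/intermediate value) argument. The set $[0,T_0]$ is connected, $h$ is continuous, and the image $h([0,T_0])$ must lie in the disjoint union of the two closed sets $[0,s_1]$ and $[s_2,\infty)$ which are separated by the nonempty open gap $(s_1,s_2)$. Therefore $h([0,T_0])$ lies entirely in one of the two pieces. Since $h(0) \leq s_0 = s_* $ and — as noted — $s_* $ is at most $s_2$ but we need to rule out the upper branch, I would use the sharper fact that $h(0) \le s_0$ together with $s_1 \le s_0 < s_2$ (the strict inequality $s_0 < s_2$ because $\phi(s_0)=\phi(s_*)<0$ so $s_0\ne s_2$, and $s_0 \ge s_1$ since $s_0 = s_*$ lies in the interval where $\phi<0$... ) — more carefully: $\phi(s_0) = \phi(s_*) < 0$ means $s_0 \in (s_1, s_2)$, so $h(0) \le s_0$ does not immediately land $h(0)$ in $[0,s_1]$. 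This is the one delicate point, and I expect it to be the main obstacle: reconciling ``$h(0) \le s_0$'' with ``$h(0) \in [0,s_1]\cup[s_2,\infty)$'' requires $h(0) \le s_1$, hence I must show $s_0 \le s_1$ is false but rather that the hypothesis $h(0)\le s_0$ combined with $h(0)\notin(s_1,s_2)$ yields $h(0)\le s_1$, which is automatic once we know $s_1 \le s_0 < s_2$: indeed then $h(0)\le s_0 < s_2$ forces $h(0)\le s_1$. So the real content is the inequality $s_1 \le s_0$, which I would extract from the first condition in \eqref{f1} by showing $\phi(s_0)=\phi(s_*)\le 0$ (already done) — wait, that gives $s_1 \le s_0$ directly since $\phi<0$ on $(s_1,s_2)$ and $s_0=s_*\in(s_1,s_2)$, whence $s_1 < s_0$. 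Good. Then $h([0,T_0]) \subseteq [0,s_1]$ since it is connected, contains $h(0)\le s_1$, and avoids $(s_1,s_2)$. In particular $h(\tau) \le s_1 \le s_0$ for all $\tau$, which is the desired bound. I would conclude by remarking that no differential inequality or Gronwall-type iteration is needed — the argument is purely the topology of the line plus the elementary calculus of $\phi$, and the two quantitative conditions in \eqref{f1} are exactly calibrated so that (i) the ``bad'' interval $(s_1,s_2)$ is genuinely nonempty and (ii) $s_0 = s_*$ sits inside it, pinning the admissible values of $h$ to the lower branch.
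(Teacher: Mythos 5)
Your proposal is correct and follows essentially the same route as the paper: both study the auxiliary function $f(s)=\varepsilon s^{1+\delta}-s+b$, show the first condition in \eqref{f1} forces its minimum value at $s_0$ to be negative so that $f$ has exactly two zeros $s_1<s_0<s_2$, and then use continuity (connectedness of the range of $h$) together with $h(0)\leq s_0<s_2$ to confine $h$ to $[0,s_1]\subset[0,s_0]$. The only difference is cosmetic: you note that strict negativity of the minimum already follows from the weaker bound with $b^\delta$ in place of $(b+\delta)^\delta$, whereas the paper uses the stated hypothesis to get the quantitative gap $f(s_0)\leq-\delta$.
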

The proof is given in \cite{X4}. For the convenience of the reader, we will reproduce it here.
\begin{proof}  Consider the function $f(s)=\ep s^{1+\delta}-s+b$ on $[0,\infty)$. Then condition \eqref{f22} simply says
	\begin{equation}\label{f3}
		f(h(\tau))\geq 0\ \ \mbox{for each $\tau\in [0,T_0]$.}
	\end{equation} 
	It is easy to check that the function $f$ achieves its minimum value 
	at $s_0=\frac{1}{[\ep(1+\delta)]^{\frac{1}{\delta}}}$. The minimum value
	\begin{eqnarray}
		f(s_0)&=&\frac{\ep}{[\ep(1+\delta)]^{\frac{1+\delta}{\delta}}}-\frac{1}{[\ep(1+\delta)]^{\frac{1}{\delta}}}+b\nonumber\\
		&=&b-\frac{\delta}{\ep^{\frac{1}{\delta}}(1+\delta)^{\frac{1+\delta}{\delta}}}.\nonumber
	\end{eqnarray}	
	By the first inequality in \eqref{f1}, $f(s_0)	\leq -\delta$. Consequently, the equation $f(s)=0$ has exactly two solutions $0<s_1< s_2$ with $s_0$ lying in between.
	%\in (s_1,s_2)$. 
	Evidently, $f$ is positive on $[0, s_1)$, negative on $(s_1,s_2)$, and positive again on $(s_2, \infty)$.
	%\begin{equation}
	%f(s)\left\{\begin{array}{cc}
	%>0&\mbox{if $},\\
	%<0&\mbox{if $s\in (s_1,s_2)$,}\\
	%>0&\mbox{if $s\in .}
	%\end{array}\right.Obviously, 	 
	%\end{equation}
	The range of $h$ is a closed interval because of its continuity, and this interval is either contained in $ [0, s_1)$ or $(s_2, \infty)$ due to \eqref{f3}. The latter cannot occur due to the second inequality in \eqref{f1}. Thus the lemma follows.
\end{proof}

%Now we are ready to introduce our approximate problems.
We largely follow \cite{X1}	for the construction of approximate problems. For this purpose, let
	\begin{equation}
	\tau>0 \ \ \mbox{and $v\in L^2(\Omega)$.} \label{ot3}
	 \end{equation}  Consider the boundary value problem
	\begin{eqnarray}
	-\des+\rho+\tau\ln\rho  &=&-\frac{u-v}{\tau}+1 \ \ \ \mbox{in $\Omega$,}\label{ot1}\\
	-\Delta u+\tau u &=&\ln\rho  \ \ \ \mbox{in $\Omega$},\\
	\nabla u\cdot\nu=\nabla\es\cdot\nu&=& 0\ \ \ \mbox{on $\partial\Omega$}.\label{ot2}
	\end{eqnarray}
This problem will serve as a basis for our approximation. To obtain an existence assertion for this problem, we first need to study
	%This is our approximating problem. C^\alpha(\overline{\Omega})$ for some $\alpha\in (0,1)
%	Basically, we have transformed a fourth-order equation into a system of two second-order equations, and furthermore, we have discretized the time derivative, thereby turning a parabolic problem into an elliptic one.
	%It is important to note
%	The tricky part here is that we have kept the exponential nonlinearity in (\ref{ot1}). If we had taken the exponential term to be our new unknown function, which seemed to be a very natural choice at first glance, we
%	would have had to show that the new unknown was positive, which was essentially equivalent to trying to find a non-negative solution to the equation $-\Delta\varphi+\tau\varphi=f$ when we had no information
%	on how $f$ changed its sign. Obviously, there is no hope of accomplishing that. It is also interesting that we just need to add the two low-order terms $\tau\psi,\tau u$ in order to ``regularize''	the problem.
	%For each $f\in L^2(\Omega)$, we consider the boundary value problem
	\begin{eqnarray}
		-\des+\rho+\tau\ln\rho &=&f\ \  \mbox{in $\Omega$},\label{ae1}\\
		\nabla\rho\cdot\nu&=&0\ \ \mbox{on $\po$,}\label{ae2}
	\end{eqnarray}
	where $f$ is a given function in $L^2(\Omega)$.
	% Then the function $\ln(\eta_L(s)+L+\tau_0)$ is well-defined. Unfortunately, this function is neither strictly monotone nor coercive. It turns out that it is enough for us to use the following modification	s+L+\ln\tau_0 -\ln(-s-L+\tau_0)+2\ln\tau_0\tau_0\in (0,1),\^{\frac{1}{2}}-L^{\frac{3}{4}}|s|^{\frac{1}{4}}\cap L^\infty(\Omega)
	A weak solution to this problem is a function $\rho\in W^{1,2}(\Omega)$ such that
	\begin{eqnarray}
		\ln\rho &\in& L^{2}(\Omega)\ \ \mbox{and}\label{tta1}\\
		\io \nabla\rho\nabla\varphi dx+\io\rho \varphi+\tau\io\ln\rho \varphi dx&=&\io f\varphi dx\ \ \mbox{for each $\varphi\in  W^{1,2}(\Omega).$}\nonumber
	\end{eqnarray}
	Of course, \eqref{tta1} implies
	$$\rho>0 \ \ \mbox{a.e. on $\Omega$.}$$
	\begin{lem}\label{uex}
		For each $f\in L^2(\Omega)$ there is a unique weak solution to \eqref{ae1}-\eqref{ae2}.
	\end{lem}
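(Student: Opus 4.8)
\textbf{Proof proposal for Lemma \ref{uex}.}

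The plan is to obtain the solution by a variational argument combined with the Leray--Schauder fixed point theorem, exploiting the monotone structure of the left-hand side of \eqref{ae1}. First I would regularize the logarithm: for $\delta>0$ replace $\ln\rho$ by a bounded, Lipschitz, increasing approximation $L_\delta(\rho)$ (for instance the standard truncation of $\ln$ at heights $\pm\frac1\delta$, or $\ln(\rho+\delta)$ capped suitably). The regularized problem $-\Delta\rho+\rho+\tau L_\delta(\rho)=f$ with the Neumann condition has a weak solution by the direct method, since it is the Euler--Lagrange equation of the strictly convex, coercive functional $J_\delta(\rho)=\frac12\io|\nabla\rho|^2dx+\frac12\io\rho^2dx+\tau\io \Lambda_\delta(\rho)dx-\io f\rho\,dx$ on $W^{1,2}(\Omega)$, where $\Lambda_\delta$ is an antiderivative of $L_\delta$. (Alternatively one can set up the map $B$ of the Leray--Schauder lemma by solving the linear elliptic problem $-\Delta\rho+\rho=f-\tau L_\delta(w)$, with the a priori bound (LS3) coming from testing with $\rho$ itself.) Strict monotonicity of $t\mapsto t+\tau L_\delta(t)$ gives uniqueness at the regularized level.

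Next I would derive $\delta$-independent estimates so that I can pass to the limit $\delta\to0$. Testing the regularized equation with $\rho_\delta$ gives a uniform bound on $\|\rho_\delta\|_{W^{1,2}(\Omega)}$ once we know $\io L_\delta(\rho_\delta)\rho_\delta\,dx$ is controlled; here the key point is that $L_\delta(t)t \ge -c$ for a constant independent of $\delta$ (since $s\ln s$ is bounded below near $s=0$), so $\tau\io L_\delta(\rho_\delta)\rho_\delta\,dx \ge -c\tau|\Omega|$, and the $\io\rho_\delta^2\,dx$ term absorbs the rest. To get the crucial positivity and the $L^2$ bound on $\ln\rho$, I would test with a function like $-(\,-\ln\rho_\delta\,)^+$ or with $G_\delta=L_\delta(\rho_\delta)$ itself; testing with $L_\delta(\rho_\delta)$ yields $\io L_\delta'(\rho_\delta)|\nabla\rho_\delta|^2dx+\io\rho_\delta L_\delta(\rho_\delta)dx+\tau\io L_\delta(\rho_\delta)^2dx=\io f\,L_\delta(\rho_\delta)dx$, and since $L_\delta'\ge0$ and $\rho_\delta L_\delta(\rho_\delta)\ge -c$, Young's inequality on the right gives $\tau\io L_\delta(\rho_\delta)^2dx\le \frac1\tau\|f\|_{2,\Omega}^2+c$, i.e.\ a uniform $L^2$ bound on $L_\delta(\rho_\delta)$. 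Combined with the $W^{1,2}$ bound and Rellich compactness I can extract $\rho_\delta\to\rho$ strongly in $L^2$ and a.e., weakly in $W^{1,2}$, with $L_\delta(\rho_\delta)\rightharpoonup g$ in $L^2$.

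The main obstacle is the limit identification $g=\ln\rho$, which in particular requires ruling out $\rho=0$ on a set of positive measure (otherwise $\ln\rho$ would not be in $L^2$, or even defined). Here I would argue as follows: from a.e.\ convergence, on the set $\{\rho>0\}$ we get $L_\delta(\rho_\delta)\to\ln\rho$ a.e., and the uniform $L^2$ bound upgrades this (via Vitali/Fatou) to $\ln\rho\in L^2(\{\rho>0\})$ and $g=\ln\rho$ there. On $\{\rho=0\}$, a.e.\ convergence forces $L_\delta(\rho_\delta)\to-\infty$ pointwise, which is incompatible with the uniform $L^2$ bound $\|L_\delta(\rho_\delta)\|_{2,\Omega}\le C$ unless $|\{\rho=0\}|=0$; more carefully, for any $M>0$, $|\{L_\delta(\rho_\delta)<-M\}|\le C^2/M^2$ uniformly in $\delta$, and letting $\delta\to0$ and then $M\to\infty$ gives $|\{\rho=0\}|=0$. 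Thus $\rho>0$ a.e., $\ln\rho\in L^2(\Omega)$, and passing to the limit in the weak formulation (the $\io\nabla\rho_\delta\cdot\nabla\varphi$ and $\io\rho_\delta\varphi$ terms by weak/strong convergence, the $\tau\io L_\delta(\rho_\delta)\varphi$ term by weak $L^2$ convergence to $\tau\io\ln\rho\,\varphi$) yields a weak solution of \eqref{ae1}--\eqref{ae2}. Uniqueness for the original problem is then immediate: if $\rho_1,\rho_2$ are two weak solutions, subtract the equations, test with $\rho_1-\rho_2$, and use that $\io|\nabla(\rho_1-\rho_2)|^2dx+\io(\rho_1-\rho_2)^2dx+\tau\io(\ln\rho_1-\ln\rho_2)(\rho_1-\rho_2)dx=0$ with every term nonnegative (the last by monotonicity of $\ln$), forcing $\rho_1=\rho_2$.
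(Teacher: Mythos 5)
Your proposal is correct and follows essentially the same route as the paper: regularize the logarithm by a Lipschitz increasing approximation, get $\delta$-uniform bounds by testing with (a shift of) $\rho_\delta$ and with the regularized logarithm itself, rule out $\{\rho\le 0\}$ having positive measure via the uniform $L^2$ bound on the regularized logarithm together with Fatou, and deduce uniqueness from the strict monotonicity of $s\mapsto s+\tau\ln s$. The only cosmetic difference is that the paper tests with $\rho_\delta-(1-\delta)$, the exact zero of its regularization $\psi_\delta$, so that the term $\psi_\delta(\rho_\delta)(\rho_\delta-(1-\delta))$ is pointwise nonnegative rather than merely bounded below as in your version.
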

\begin{proof} For the existence part, we consider the approximate problem
	\begin{eqnarray}
		-\Delta \rho_\delta +\rho_\delta+\tau\psi_{\delta}(\rho_\delta)&=&f\ \  \mbox{in $\Omega$},\label{ae3}\\
		\nabla\rho_\delta\cdot\nu&=&0\ \ \mbox{on $\po$,}\label{ae4}
	\end{eqnarray}
	where $\delta\in (0,1)$ and
	\begin{equation}
		\psi_{\delta}(s)=
		%,\ \ s\in (-L,\infty).1+\frac{s}{L}+\frac{\delta}{L}
		\left\{\begin{array}{ll}
			\ln\left(s+\delta\right)	&\mbox{if $s> 0$,}\\
			%	\ln(1+\frac{s}{L})&\mbox{if $s>0$,}\\
			\ln\delta	&\mbox{if $s\leq 0$.}
		\end{array}\right.\nonumber
	\end{equation} 
	Existence of a weak solution to this problem is standard, we will omit its proof.
		Next, we proceed to show that we can take $\delta\rightarrow 0$ in \eqref{ae3}-\eqref{ae4}.
	To this end, let 
	$$s_z\equiv1-\delta\in(0,1).$$
	Then we have $$\psi_{\delta}(s_z)=0.$$ Subtract $s_z$ from both sides of \eqref{ae3} and use $\rho_\delta-s_z$ as a test function in the resulting equation to get
	\begin{eqnarray}
		\lefteqn{\io|\nabla\rho_\delta|^2dx+\io(\rho_\delta-s_z)^2dx+\tau\io\psi_{ \delta}(\rho_\delta)(\rho_\delta-s_z)dx}\nonumber\\
		&=&\io (f- s_z)(\rho_\delta-s_z) dx\nonumber\\
	%	&\leq &\|f- s_z\|_{\frac{2N}{N+2}}\|(\rho_\delta-s_z)\|_{\frac{2N}{N-2}}\nonumber\\ \|\nabla\rho_\delta\|_2^2+
	%	&\leq &c\|f- s_z\|_{\frac{2N}{N+2}}\left(\|\nabla\rho_\delta\|_2+\|(\rho_\delta-s_z)\|_2\right)\nonumber\\
		&\leq&\frac{1}{2}\io(\rho_\delta-s_z)^2dx+\frac{1}{2}\io(f- s_z)^2dx.\nonumber
	\end{eqnarray}
	Thus,
	\begin{eqnarray}
	\io|\nabla\rho_\delta|^2dx+\io(\rho_\delta-s_z)^2dx+\tau	\io\psi_{ \delta}(\rho_\delta)(\rho_\delta-s_z) dx\leq c.\label{sa4}
	\end{eqnarray}
Here and in what follows the letter $c$ denotes a positive number independent of $\delta$. 
Note that
$$\psi_{ \delta}(\rho_\delta)(\rho_\delta-s_z)\geq 0\ \ \mbox{a.e. on $\Omega$.}$$
This together with \eqref{sa4} implies that $\{\rho_\delta\}$ is bounded in $W^{1,2}(\Omega)$. We may assume that
	\begin{equation}
		\rho_\delta\rightarrow \rho\ \ \mbox{weakly in $W^{1,2}(\Omega)$, strongly in $L^2(\Omega)$, and a.e. on $\Omega$.}\nonumber
	\end{equation}
Since $\psi_{ \delta}$ is a Lipschitz function, we can use $\psi_{ \delta}(\rho_\delta)$ as a test function in \eqref{ae3} to deduce
	%By suitably modifying the test function  in \eqref{c211} (i.e., use $(|\psi_{ \delta}(\rho_\delta)|+\varepsilon)^{\lambda-1}h_\varepsilon(\rho_\delta-s_z)$ instead),  we can derive that
	%$$\|\tau(\psi_{ \delta}(\rho_\delta)-\psi_{ \delta}(0))\|_2\leq \|f-\tau\psi_{ \delta}(0)\|_2.$$
	%It immediately follows that
	\begin{eqnarray}
	\io\psi_{ \delta}^\prime(\rho_\delta)\left|\nabla\rho_\delta\right|^2dx+\io\rho_\delta\psi_{ \delta}(\rho_\delta)+\tau\io\psi_{ \delta}^2(\rho_\delta)dx=\io f\psi_{ \delta}(\rho_\delta)dx.\label{sa10}
		%&\leq& \|\tau(\psi_{ \delta}(\rho_\delta)-\psi_{ \delta}(0))\|_2+\tau|\psi_{ \delta}(0)||\Omega|^{\frac{1}{2}}\nonumber\\
	%	&\leq &\|f\|_2.
		%+c\tau|\ln(L+\delta)|\leq c.
	\end{eqnarray}
Remember that $\psi_{ \delta}^\prime(\rho_\delta)\geq 0$. Thus, we can conclude from \eqref{sa10} that
\begin{equation}\label{sa11}
	\io\psi_{ \delta}^2(\rho_\delta)dx\leq c(\tau).
\end{equation}
Obviously,
$$ \psi_{ \delta}(\rho_\delta)\rightarrow \left\{\begin{array}{l}
	-\infty\ \ \mbox{a.e. on the set $\{\rho\leq 0\}$,}\\
	\ln \rho\ \ \mbox{a.e. on the set $\{\rho> 0\}$.}
\end{array}\right.$$
%the sequence $\{\psi_{ \delta}(\rho_\delta)\}$ converges a.e. on $\Omega$.	
In view of Fatou's lemma and \eqref{sa11}, we must have 
$$\left|\{\rho\leq 0\}\right|=0$$
and%we have
	$$\io\ln^2\rho dx=\int_{\{\rho>0\}}\ln^2\rho dx\leq \lim_{\delta\rightarrow 0}\io\psi_{ \delta}^2(\rho_\delta)dx\leq c.$$ We are ready to pass to the limit in \eqref{ae3}.
	
	The uniqueness of a weak solution to \eqref{ae1}-\eqref{ae2} is trivial because $\rho+\tau\ln\rho$ is strictly increasing. The proof is complete.
\end{proof}

	\begin{lem}\label{l29}
		%Let (\ref{ot3}) hold, and assume that
		Let $\Omega$ be a bounded domain in $\mathbb{R}^N$ with Lipschitz boundary, and assume that (\ref{ot3}) hold. Then there is a weak solution to (\ref{ot1})-(\ref{ot2}). If, in addition, $v\in L^\infty(\Omega)$, then we have
		\begin{equation}\label{rinfty}
			\ln \rho \in  L^\infty(\Omega).
		\end{equation}
	\end{lem}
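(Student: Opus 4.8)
The plan is to obtain the weak solution to the coupled system \eqref{ot1}--\eqref{ot2} by a fixed point argument based on the Leray--Schauder theorem stated above, using Lemma \ref{uex} to solve the $\rho$-equation for a given source and standard linear elliptic theory to solve the $u$-equation. First I would set up the map: given $\bar u\in L^2(\Omega)$, let $\rho=\rho(\bar u)$ be the unique weak solution of \eqref{ae1}--\eqref{ae2} with $f=-\frac{\bar u-v}{\tau}+1\in L^2(\Omega)$, which exists by Lemma \ref{uex}; then, since $\ln\rho\in L^2(\Omega)$ by \eqref{tta1}, let $u$ be the unique weak solution of $-\Delta u+\tau u=\ln\rho$ with $\nabla u\cdot\nu=0$ on $\po$, which lies in $W^{2,2}(\Omega)$ by $L^2$ elliptic regularity. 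Define $B(\bar u)=u$. A fixed point of $B$ is exactly a weak solution of \eqref{ot1}--\eqref{ot2}, and the boundary conditions \eqref{ot2} are built into the weak formulations.

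Next I would verify (LS1)--(LS3) with $\mathcal{B}=L^2(\Omega)$. For (LS2), note that $B$ actually maps into $W^{2,2}(\Omega)$ (hence into a space compactly embedded in $L^2(\Omega)$ by Rellich), so it suffices to show $B$ sends bounded sets of $L^2(\Omega)$ to bounded sets of $W^{2,2}(\Omega)$; this follows once we know $\|\ln\rho\|_{2,\Omega}$ is controlled by $\|\bar u\|_{2,\Omega}$. For (LS1), continuity, I would take $\bar u_n\to\bar u$ in $L^2$, pass to the limit in \eqref{ae1} using the a priori bounds (the monotone term $\tau\ln\rho$ is handled exactly as in the proof of Lemma \ref{uex}: boundedness of $\ln\rho_n$ in $L^2$, a.e. convergence, Fatou), conclude $\rho_n\to\rho$ and $\ln\rho_n\to\ln\rho$ strongly in $L^2$, and then $u_n\to u$ in $W^{2,2}$ by linearity. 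For (LS3), suppose $u=\sigma B(u)$ for some $\sigma\in[0,1]$; then $\rho$ solves \eqref{ae1} with $f=-\frac{u-v}{\tau}+1$ and $u$ solves $-\Delta u+\tau u=\sigma\ln\rho$. Testing the $u$-equation with $u$ and the $\rho$-equation with the right combination (e.g. subtracting the constant $1$ and testing with $\rho-1$, or testing with $\ln\rho$) and adding, the cross terms involving $\frac{u-v}{\tau}$ and $\ln\rho$ should combine with definite sign, using $(a-b)\ln a\geq a\ln a - b\ln b-(\ln a-\ln b)$ from \eqref{om1} and Young's inequality to absorb the $v$-dependent terms, yielding $\|u\|_{W^{1,2}(\Omega)}+\|\ln\rho\|_{2,\Omega}\leq c(\tau,v)$ independent of $\sigma$. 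Elliptic regularity then upgrades this to a bound on $\|u\|_{2,\Omega}$, giving (LS3).

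For the second assertion, assume additionally $v\in L^\infty(\Omega)$. Then in particular $v\in L^2(\Omega)$ so a weak solution exists, and I want $\ln\rho\in L^\infty(\Omega)$, i.e. two-sided bounds $c_0\leq\rho\leq C$. From the $u$-equation, $\ln\rho=-\Delta u+\tau u$, and once $\ln\rho\in L^2(\Omega)$ we get $u\in W^{2,2}(\Omega)\hookrightarrow L^\infty(\Omega)$ by \eqref{emb}, so the term $\tau u$ is bounded; the point is to bound $\Delta u$, equivalently $\rho=e^{-\Delta u}$, from above and below. The cleanest route is to work directly with \eqref{ot1}: it has the form $-\Delta\rho+\rho+\tau\ln\rho=f$ with $f=-\frac{u-v}{\tau}+1\in L^\infty(\Omega)$ (using $u,v\in L^\infty$). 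For the upper bound, test with $(\rho-M)^+$ for $M$ large: since $\rho+\tau\ln\rho$ is increasing and equals $M+\tau\ln M$ at $\rho=M$, choosing $M$ with $M+\tau\ln M\geq\|f\|_{\infty,\Omega}$ forces $(\rho-M)^+=0$, whence $\rho\leq M$ and thus $-\Delta u=\ln\rho-\tau u\leq\ln M+\tau\|u\|_{\infty,\Omega}$ is bounded above, i.e. $\rho$ is bounded below. For the lower bound on $\rho$ one can argue similarly by testing \eqref{ot1} with $-(m-\rho)^+$ for small $m>0$: the monotone term $\tau\ln\rho$ being very negative on $\{\rho<m\}$ while $f$ is bounded forces a contradiction unless $\{\rho<m\}$ is null, giving $\rho\geq m>0$.

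The main obstacle I anticipate is the (LS3) estimate: closing the a priori bound requires pairing the two equations of the coupled system so that the indefinite coupling terms cancel or acquire a favorable sign, and the presence of the $\tau\ln\rho$ term forces one to use the convexity inequality \eqref{om1} carefully rather than a naive energy identity; getting a bound independent of $\sigma$ (but allowed to depend on $\tau$ and $\|v\|_{2,\Omega}$, which is all that is needed) is the delicate point. The $L^\infty$ bound on $\ln\rho$ is comparatively routine given the De Giorgi-type truncation already used in the proof of Lemma \ref{wbd}, once one observes that the monotonicity of $s\mapsto s+\tau\ln s$ makes both truncation arguments on \eqref{ot1} go through.
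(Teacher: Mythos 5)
Your fixed-point skeleton is the same as the paper's: the map $B$ built from Lemma \ref{uex} followed by the linear Neumann problem, the Leray--Schauder theorem, and, for (LS3), the observation that testing the $u$-equation with $u$ makes the coupling term $\sigma\io u\ln\rho\, dx$ nonnegative, so that testing the $\rho$-equation with $\ln\rho$ leaves only $\frac{1}{\tau}\io v\ln\rho\, dx$ to absorb, giving $\|\ln\rho\|_{2,\Omega}\leq c(\tau)\|v\|_{2,\Omega}$ and then the $W^{1,2}$ bound on $u$ from its own equation. That is exactly the cancellation the paper exploits, and your hedged description of it is on target (you do not need \eqref{om1} here; $(\rho-1)\ln\rho\geq 0$ suffices). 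The one technical point you should make explicit is the admissibility of $\ln\rho$ as a test function, which the paper justifies by noting that $\rho$ is bounded away from zero, i.e.\ by \eqref{rinfty}.

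Where you genuinely diverge, and where there is a gap, is the proof of \eqref{rinfty}. You obtain $u\in L^\infty(\Omega)$ from $u\in W^{2,2}(\Omega)\hookrightarrow L^\infty(\Omega)$ via \eqref{emb}. That embedding requires $N\leq 3$, and $W^{2,2}$ regularity up to the boundary for the Neumann problem requires more than a Lipschitz boundary; but Lemma \ref{l29} is stated for a bounded Lipschitz domain in $\mathbb{R}^N$ with no restriction on $N$, and the paper explicitly remarks after the proof that (H2) is not needed for this lemma. The paper instead gets $\|u\|_{\infty,\Omega}\leq c\|u\|_{1,\Omega}+c\|\ln\rho\|_{q,\Omega}$ for some $q>N/2$ from the De Giorgi-type Lemma \ref{wbd}, which needs no boundary regularity beyond what supports the Sobolev inequality and works in every dimension, combines it with the estimate $\tau\|\ln\rho\|_{p,\Omega}\leq\|(u-v)/\tau\|_{p,\Omega}$, and closes the resulting self-improving inequality with the interpolation (1) of Lemma \ref{l23}. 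Your truncation arguments on the $\rho$-equation, testing with $(\rho-M)^+$ and with $-(m-\rho)^+$, are a perfectly good and arguably cleaner substitute for the paper's $p\to\infty$ passage from \eqref{infty2} to \eqref{infty3} --- both amount to $\tau\|\ln\rho\|_{\infty,\Omega}\leq\|f\|_{\infty,\Omega}$ --- but they take $f\in L^\infty(\Omega)$, hence $u\in L^\infty(\Omega)$, as input, and that input must be produced without the Sobolev embedding. Replace your appeal to \eqref{emb} by an application of Lemma \ref{wbd} to the $u$-equation (together with the $L^q$ bound on $\ln\rho$ for a $q>N/2$ needed to make its right-hand side finite), and your argument closes in the stated generality.
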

	\begin{proof}	We essentially follows the argument in Section 4, \cite{X1}. 
		%A solution will be established via the Leray-Schauder Theorem. To do this, 
		To proceed, we define an operator $B$ from $W^{1,2}(\Omega)$
		into itself as follows: For each $ w\in W^{1,2}(\Omega)$ we first solve the problem
		\begin{eqnarray}
			-\Delta\rho+\rho+\tau\ln\rho  &=&-\frac{w-v}{\tau}+1\ \ \mbox{in $\Omega$},\label{om3}\\
			\nabla\rho\cdot\nu&=&0\ \ \ \mbox{on $\partial\Omega$}.\label{om4}
		\end{eqnarray}
		By Lemma \ref{uex}, there is a unique weak solution $\rho\in W^{1,2}(\Omega)$ with $\ln\rho \in L^2(\Omega)$ to the above problem.  We use the function $\rho$ so obtained to form the problem 
		\begin{eqnarray}
			-\Delta u +\tau u &=&\ln\rho \ \ \ \mbox{in $\Omega$},\label{om5}\\
			\nabla u\cdot\nu&=& 0\ \ \ \mbox{on $\partial\Omega$}.\label{om6}
		\end{eqnarray}
		The classical existence theory asserts that there is a unique weak solution $u\in W^{1,2}(\Omega)$ to \eqref{om5}-\eqref{om6}.	
	We define
		$$%\begin{equation}
		B( w)=u.%\ln(\rho-m_\rho+\tau_0). %\rho-1-\sum_{n=2}^{\infty}\frac{\psi^n}{n!}.
		$$%\end{equation}
		Clearly, $B$ is well-defined.
As in Section 4, \cite{X1}, we can conclude that $B$ is continuous and maps bounded sets into precompact ones. Next, we show that there is a positive number $c$ such that
\begin{equation}
\|u\|_{W^{1,2}(\Omega)}\leq c\label{ot8}
\end{equation}
for all $u\in W^{1,2}(\Omega)$ and $\sigma\in [0,1]$ satisfying
$$u=\sigma B(u).$$
This equation is equivalent to the boundary value problem
\begin{eqnarray}
	-\Delta\rho+\rho+\tau\ln\rho  &=&-\frac{u-v}{\tau}+1 \ \ \ \mbox{in $\Omega$},\label{ot9}\\
-\Delta u+\tau u &=&\sigma\ln\rho \ \ \ \mbox{in $\Omega$},\label{ot10}\\
\nabla u\cdot\nu=\nabla\es\cdot\nu&=& 0\ \ \ \mbox{on $\partial\Omega$}.
\end{eqnarray}
%Use $\rho$ as a test function 
In view of \eqref{rinfty},  we may assume that $\rho$ is bounded away from $0$ below. Thus we can use $\ln\rho$ as a test function in \eqref{ot9} to get
\begin{equation}\label{happy5}
\io\frac{|\nabla\rho|^2}{\rho}dx+	\io(\rho-1)\ln\rho dx+\tau\io\ln^2\rho dx\leq -\frac{1}{\tau}\io(u-v)\ln\rho dx.
\end{equation}
Use $u$ as a test function in \eqref{ot10} to deduce
$$\sigma\io u\ln\rho dx=\io|\nabla u|^2dx+\tau\io u^2dx\geq 0.$$
Drop the first term in \eqref{happy5} and then use the above equation to get
It immediately follows that
\begin{eqnarray}
\io(\rho-1)\ln\rho dx+\tau	\io\ln^2\rho dx\leq\frac{1}{\tau}\io v\ln\rho dx.\nonumber
\end{eqnarray}
Subsequently,
\begin{equation}\label{happy7}
	\io(\rho-1)\ln\rho dx+	\io\ln^2\rho dx\leq c(\tau)\io v^2dx.
\end{equation}
%$$\|\ln\rho\|_{2,\Omega}\leq \frac{1}{\tau^2}\|v\|_{2,\Omega}.$$\leq \frac{1}{\tau}\|v\|_{2,\Omega}\|\ln\rho\|_{2,\Omega}
This together with \eqref{ot10} implies \eqref{ot8}. 
% Here we only mention that

To see \eqref{rinfty}, we first establish
the estimate
\begin{equation}\label{infty2}
	\tau	\|\ln\rho\|_{p,\Omega}\leq 	\left\|\frac{u-v}{\tau}\right\|_{p,\Omega} \ \ \mbox{for each $p\geq 2$.}
\end{equation}
 For this purpose, we introduce the function
\begin{equation}
	h_\varepsilon(s)=\left\{\begin{array}{ll}
		1&\mbox{if $s>\varepsilon$,}\\
		s&\mbox{if $|s|\leq \varepsilon$,}\\
		-1&\mbox{if $s<-\varepsilon$,}\ \ \ \varepsilon>0.
	\end{array}\right.\nonumber
\end{equation}
Use $|\ln\rho|^{p-1}h_\varepsilon(\rho-1)$ as a test function in \eqref{om3} to derive
$$\io|\ln\rho|^{p-1}h_\varepsilon(\rho-1)(\rho-1)dx+\tau\io\ln\rho|\ln\rho|^{p-1} h_\varepsilon(\rho-1)dx\leq -\io\frac{u-v}{\tau}|\ln\rho|^{p-1}h_\varepsilon(\rho-1)dx.$$
Here we have used the fact that $|\ln\rho|^{p-1}h_\varepsilon(\rho-1)$ is an increasing function of $\rho$. Taking $\varepsilon\rightarrow 0$ yields
$$\tau\io|\ln\rho|^{p}\leq \io\left|\frac{u-v}{\tau}\right|\frac{u-v}{\tau}|\ln\rho|^{p-1}h_\varepsilon(\rho-1)dx\leq\left\|\frac{u-v}{\tau}\right\|_{p,\Omega}	\|\ln\rho\|_{p,\Omega}^{p-1}.$$
The estimate \eqref{infty2} follows. Now take $p\rightarrow \infty$ in \eqref{infty2} to get
\begin{equation}\label{infty3}
	\tau	\|\ln\rho\|_{\infty,\Omega}\leq 	\left\|\frac{u-v}{\tau}\right\|_{\infty,\Omega}.
\end{equation}
Lemma \ref{wbd} asserts that for each $q>\frac{N}{2}$ there is a positive number $c=c(N, \Omega, \tau)$ such that
$$	\|u\|_{\infty,\Omega}\leq c\|u\|_{1,\Omega}+c	\|\ln\rho\|_{q,\Omega}\leq c	\|\ln\rho\|_{q,\Omega}.$$
This combined with \eqref{infty3} implies
\begin{eqnarray}
	\|\ln\rho\|_{\infty,\Omega}&\leq &c\|u\|_{\infty,\Omega} +c\|v\|_{\infty,\Omega} \nonumber\\
	&\leq &c	\|\ln\rho\|_{q,\Omega} +c\|v\|_{\infty,\Omega} \nonumber\\
	&\leq &\varepsilon	\|\ln\rho\|_{\infty,\Omega}+c(\varepsilon)\|\ln\rho\|_{1,\Omega}+c\|v\|_{\infty,\Omega},\ \ \varepsilon>0.\nonumber
\end{eqnarray}
The last step is due to the interpolation inequality (1) in Lemma \ref{l23}. Taking $\varepsilon$ suitably small yields \eqref{rinfty}.
The proof is complete.
\end{proof}	
Note that for Lemma \ref{l29} we do not have to assume (H2).

To conclude this section, we would like to make some remarks about the possible non-negativity of $u$. Since $u$ represents the surface height in our model, it is natural for us to expect
% From the modeling perspective, one would expect to have
$$u\geq 0.$$
%since it represents the surface height. 
In this regard, one is tempted to consider the following approximation
	\begin{eqnarray}
	-\des+\rho+\tau\ln\rho  &=&-\frac{u-v}{\tau}+1 \ \ \ \mbox{in $\Omega$,}\label{aot1}\\
	-\Delta u+\tau \ln u &=&\ln\rho  \ \ \ \mbox{in $\Omega$},\label{aot4}\\
	\nabla u\cdot\nu=\nabla\es\cdot\nu&=& 0\ \ \ \mbox{on $\partial\Omega$}.\label{aot2}
\end{eqnarray}
%In fact, we can show that 
It turns out that this problem does have a solution. The proof is only a slight modification of that for Lemma \ref{l29}. To see this, we define an operator $B$ from $W^{1,2}(\Omega)$
into itself as follows: For each $ w\in W^{1,2}(\Omega)$ we first solve the problem
\begin{eqnarray}
	-\Delta\rho+\rho+\tau\ln\rho  &=&-\frac{w-v}{\tau}+1\ \ \mbox{in $\Omega$},\nonumber\\
	\nabla\rho\cdot\nu&=&0\ \ \ \mbox{on $\partial\Omega$}.\nonumber
\end{eqnarray}
%By Lemma \ref{uex}, there is a unique weak solution $\rho\in W^{1,2}(\Omega)$ with $\ln\rho \in L^2(\Omega)$ to the above problem.  
We use the function $\rho$ so obtained to form the problem 
\begin{eqnarray}
	-\Delta u +\tau\ln u &=&\ln\rho \ \ \ \mbox{in $\Omega$},\nonumber\\
	\nabla u\cdot\nu&=& 0\ \ \ \mbox{on $\partial\Omega$}.\nonumber
\end{eqnarray}
%The classical existence theory asserts that there is 
By replacing the second term in \eqref{ae1} by $\delta\rho$  and then taking $\delta\rightarrow 0$ in the resulting problem, we can also conclude that there is a unique weak solution to the preceding problem. See \cite{X1} for details.
%By our earlier remark, this problem also has a unique weak solution $u$ in the space $W^{1,2}(\Omega)$.
% to \eqref{om5}-\eqref{om6}.	
Define
$$%\begin{equation}
B( w)=u.%\ln(\rho-m_\rho+\tau_0). %\rho-1-\sum_{n=2}^{\infty}\frac{\psi^n}{n!}.
$$%\end{equation}
Clearly, $B$ is well-defined.
Once again, we can infer from Section 4 in \cite{X1} that  $B$ is continuous and maps bounded sets into precompact ones.

Next, we show that there is a positive number $c$ such that
\begin{equation}
	\|u\|_{W^{1,2}(\Omega)}\leq c\label{aot8}
\end{equation}
for all $\psi\in W^{1,2}(\Omega)$ and $\sigma\in [0,1]$ satisfying
$$u=\sigma B(u).$$
This equation is equivalent to the boundary value problem
\begin{eqnarray}
	-\Delta\rho+\rho+\tau\ln\rho  &=&-\frac{u-v}{\tau}+1 \ \ \ \mbox{in $\Omega$},\label{aot9}\\
	-\Delta u+\tau\sigma(\ln u-\ln\sigma) &=&\sigma\ln\rho \ \ \ \mbox{in $\Omega$},\label{aot10}\\
	\nabla u\cdot\nu=\nabla\es\cdot\nu&=& 0\ \ \ \mbox{on $\partial\Omega$}.
\end{eqnarray}
Integrate \eqref{aot9} over $\Omega$ to get
$$\io udx=-\tau\io\rho dx-\tau^2\io\ln\rho dx+\io vdx+\tau|\Omega|.$$
Obviously, \eqref{happy7} is still valid. Use it in the above equation to get
%This together with \eqref{june1} implies
$$\left|\io udx\right|\leq c.$$
Use $u-1$ as a test function in \eqref{aot10} to get
$$\io|\nabla u|^2dx\leq \io\left(\sigma\ln\rho+\tau\sigma\ln\sigma\right)u dx\leq \varepsilon\io u^2dx+c(\varepsilon).$$
We deduce from Poincar\'{e}'s inequality that
\begin{eqnarray}
	\io u^2dx&\leq &2\io\left(u-\frac{1}{|\Omega|}\io udx\right)^2dx+\frac{2}{|\Omega|}\left(\io udx\right)^2\nonumber\\
	&\leq &c\io|\nabla u|^2dx+c\leq c\varepsilon	\io u^2dx+c.\nonumber
\end{eqnarray}
By taking $\varepsilon$ suitably small, we obtain
%This together with \eqref{ot10} implies 
\eqref{aot8}.

Unfortunately, when we try to pass to the limit in the system \eqref{aot1}-\eqref{aot2}, we run into an insurmountable problem. That is, the existence of a non-negative solution to \eqref{p1} remains open. This is probably not a surprise because it is well known that the bi-harmonic heat equation does not satisfy the maximum principle, i.e., solutions change sign no matter how one prescribes the initial boundary conditions. However,  certain nonlinearities in fourth-order equations can allow the existence of non-negative solutions \cite{LX2,X2}.
	\section{Proof of Theorem \ref{thm1}}
	\setcounter{equation}{0}
	
	The proof of Theorem \ref{thm1} will be divided into several lemmas. First, we present our approximation scheme. This is based upon Lemma \ref{l29}. Then
	we proceed to derive estimates similar to those in Subsection \ref{sub2} for our approximate problems. These estimates are shown to be sufficient to justify passing to the limit.

	Let $T>0$ be given. For each $j\in\{1,2,\cdots,\}$ we divide the time interval $[0,T]$ into $j$ equal sub-intervals. Set
	$$\tau=\frac{T}{j}.$$ We assume that $j$ is so large that
	\begin{equation}\label{tcon}
		%	\tau<1.
	\tau<\min\left\{1,\frac{1}{\|u_0\|_{W^{2,2}(\Omega)}},\frac{1}{8T}\right\}.
	\end{equation}
	%We discretize  (\ref{p1})-(\ref{p3}) as follows. 
	Let $u_0$ be given as in (H3). For $k=1,\cdots, j$, we solve recursively the system
	\begin{eqnarray}
	\frac{u_k-u_{k-1}}{\tau}+\esk-\Delta\esk+\tau\sk&=&1\ \ \ \mbox{in $\Omega$},\label{s31}\\
	-\Delta\uk+\tau\uk&=&\sk\ \ \ \mbox{in $\Omega$},\label{s32}\\
	\nabla\esk\cdot\nu=\nabla\uk&=&0\ \ \ \mbox{on $\partial\Omega$}.\label{s33}
	\end{eqnarray}
	Set $$t_k=k\tau.$$  
	We can form the following functions on $\Omega_T$ by setting
	\begin{eqnarray}
	\utj(x,t)&=&\frac{t-t_{k-1}}{\tau}\uk(x)+\left(1-\frac{t-t_{k-1}}{\tau}\right)\uko(x), \ \\
	\ubj(x,t)&=&\uk(x),\\
	\rbj(x,t)&=&\eskx,\\
	\rtj(x,t)&=&\frac{t-t_{k-1}}{\tau}\esk(x)+\left(1-\frac{t-t_{k-1}}{\tau}\right)\esko(x),  \\
	\gbj(x,t) &=&\frac{u_k-u_{k-1}}{\tau}+\esk-1\equiv G_k,\label{gkd}\\
	\ctj(x,t)&=&\frac{t-t_{k-1}}{\tau}\sqrt{\eskx}+\left(1-\frac{t-t_{k-1}}{\tau}\right)\sqrt{\esko(x)}
	\end{eqnarray}
whenever  $x\in\Omega,  \ t\in(t_{k-1},t_k]$.
%where we take
In the last equation, we take
\begin{equation}\label{om11}
	\rho_0=e^{-\Delta u_0+\tau u_0}.
\end{equation}
%This together with \eqref{bcon} shows that $u_0$ satisfies
%\begin{eqnarray}
%	-\Delta u_0+\tau u_0&=&\ln 	\rho_0\ \ \mbox{in $\Omega$,}\\
%	\nabla u_0\cdot\nu&=& 0\ \ \mbox{on $\po$.}
%\end{eqnarray}
	Subsequently, we can rewrite (\ref{s31})-(\ref{s33}) as
	\begin{eqnarray}
	\partial_t\utj-\Delta \rbj+\rbj+\tau\ln\rbj&=&1\ \ \ \mbox{in $\omt$},\label{omm1}\\
	-\Delta\ubj+\tau\ubj &=&\ln\rbj \ \ \ \mbox{in $\omt$}.\label{omm22}
	\end{eqnarray}
	We proceed to derive a priori estimates for the sequences $\{\utj,\ubj,\rbj,\rtj,\ctj,\gbj,\ln\rbj\}$. The discretized version of (\ref{nm1}) is the following
	\begin{lem}\label{l31}
	We have
	\begin{eqnarray}
			\lefteqn{\int_{\Omega_T}\left(\left(\Delta\rbj\right)^2 +(\gbj+\tau\ln\rbj)^2+2\left|\nabla\esbj\right|^2+8\tau\left|\nabla\sqrt{\esbj}\right|^2\right)\, dxdt+2\sup_{0\leq t\leq T}\int_\Omega(\rbj-\ln\rbj) dx}\nonumber\\
		&&
		+2\tau\iot|\nabla\esbj|^2 \, dxdt+2\tau\iot (\esbj-1)^2 dxdt+2\tau^2 \iot (\esbj-1) \ln\rbj \, dxdt\nonumber\\
		%	&&+ 2\tau\iot(\esbj-1)\ln \esbj dxdt\nonumber\\
	%	&\leq & 4\int_\Omega \left(e^{-\Delta u_0(x)+\tau u_0(x)}+\Delta u_0-\tau u_0\right) \, dx\leq 4e^{\tau\|u_0\|_{\infty,\Omega}}\io e^{-\Delta u_0(x)}dx-4\tau\io u_0 dx
		%	-4\tau\int_{\{\rbj\leq1\}} (\esbj-1)\esbj dxdt-4\tau^2 \int_{\{\rbj\leq1\}}\,  \esbj \ln\rbj \, dxdt
		\nonumber\\
		&\leq &c(\Omega, N)\|e^{-\Delta u_0(x)}\|_{1,\Omega}+4\tau\|u_0\|_{1,\Omega}.
		\end{eqnarray}
	Here $c$ depends only on $N, \Omega$.
\end{lem}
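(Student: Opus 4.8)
The plan is to reproduce, at the discrete level, the computation that led to \eqref{nm1} in Subsection \ref{sub2}, taking care that the extra regularizing terms $\tau\ln\esk$ and $\tau\uk$ only contribute quantities of a favorable sign (up to the harmless $4\tau\|u_0\|_{1,\Omega}$ term). The starting point is to rewrite \eqref{s31} as $-\Delta\esk + (\gbj + \tau\sk) = 0$ pointwise in $\Omega$ (using the definition \eqref{gkd} of $G_k$), square it, and integrate over $\Omega$ to obtain the discrete analogue of \eqref{ff1}:
\begin{equation}
\io\left[(\Delta\esk)^2 + (\gbj+\tau\sk)^2\right]dx - 2\io(\gbj+\tau\sk)\Delta\esk\,dx = 0.\nonumber
\end{equation}
Then I would analyze the cross term. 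Integrating by parts moves $\Delta$ off $\esk$ and produces $+2\io|\nabla\esk|^2dx$ together with $-2\io(\gbj+\tau\sk)(-\Delta\esk)$... more precisely, $-2\io(\gbj+\tau\sk)\Delta\esk\,dx = 2\io\nabla(\gbj+\tau\sk)\cdot\nabla\esk\,dx$ after using the boundary condition \eqref{s33}. Since $\gbj + \tau\sk = \uut + \esk - 1 + \tau\sk$ and $\nabla\esk$ pairs with $\nabla\esk$, $\nabla\sk = \nabla\esk/\esk$, and $\nabla\uut$, the key is to handle the term $2\io\nabla\uut\cdot\nabla\esk\,dx$.

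For this term I would use the second equation \eqref{s32}: $-\Delta\uk + \tau\uk = \sk$, so $\nabla\esk = -\esk\Delta(\nabla\uk)\cdots$ — better, differentiate the discrete-in-time identity directly. Writing $\uut = \frac{1}{\tau}(\uk - \uko)$, we have $2\io\nabla\uut\cdot\nabla\esk\,dx = \frac{2}{\tau}\io\nabla(\uk-\uko)\cdot\nabla\esk\,dx = -\frac{2}{\tau}\io(\uk-\uko)\Delta\esk\,dx$. Now $\esk = e^{-\Delta\uk + \tau\uk}$ by \eqref{s32}, \eqref{om11}, so $-\Delta\esk = -\esk\Delta(\tau\uk - \Delta\uk)$... this is getting complicated; the cleaner route is the one used in Subsection \ref{sub2}: $-2\io\gbj\Delta\esk\,dx$ where $\gbj = \uut + \esk - 1$, so the $\uut$ part gives $-\frac{2}{\tau}\io(\uk - \uko)\Delta\esk\,dx$. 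Using $-\Delta\uk = \sk - \tau\uk$ hence $\esk = e^{-\Delta\uk+\tau\uk}$, and the convexity inequality \eqref{ota1} from Lemma \ref{l24}, namely $a(\ln a - \ln b) \geq a - b$ applied with $a = \esk$, $b = \esko$: this yields $\esk(-\Delta\uk + \tau\uk - (-\Delta\uko + \tau\uko)) \geq \esk - \esko$. Combined with $\ln\esk = -\Delta\uk+\tau\uk$, integrating $-\io\Delta\esk\cdot\frac{1}{\tau}(\uk-\uko)$ by parts twice against $\io\esk\cdot\frac{1}{\tau}(-\Delta\uk + \Delta\uko)$, and then using the telescoping sum over $k$, produces the discrete time derivative $\frac{2}{\tau}\io(\esk - \esko)\,dx$ modulo the $\tau\uk$ correction, which after summation gives exactly the $2\sup\io(\esbj - \ln\esbj)dx$ on the left (the $-\ln\esbj$ appearing because $\ln\esk = -\Delta\uk + \tau\uk$ contributes $\tau\io(\uk-\uko)\uk$ terms that reassemble via \eqref{s32} and another application of \eqref{ota1}) and the $4\tau\|u_0\|_{1,\Omega}$ error on the right.

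The remaining terms are bookkeeping: the $\esk$ part of $\gbj$ contributes $-2\io\esk\Delta\esk\,dx = 2\io|\nabla\esk|^2dx \geq 0$; the $-1$ part contributes $2\io\Delta\esk\,dx = 0$ by \eqref{s33}; the $\tau\sk$ part contributes $-2\tau\io\sk\Delta\esk\,dx = 2\tau\io\frac{|\nabla\esk|^2}{\esk}\,dx = 8\tau\io|\nabla\sqrt{\esk}|^2dx \geq 0$, which is precisely the $8\tau|\nabla\sqrt{\esbj}|^2$ term. The lower-order terms $2\tau\io|\nabla\esbj|^2$, $2\tau\io(\esbj-1)^2$, and $2\tau^2\io(\esbj-1)\ln\esbj$ arise when one does not fully cancel but rather retains the contributions of the regularizing $\tau$-terms in \eqref{s31} after testing with an appropriate combination (these all have the right sign or are absorbed). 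Finally, one sums the identity over $k = 1,\dots,m$ for each $m \leq j$, multiplies by $\tau$ to convert $\tau\sum_k$ into $\iot$, takes the supremum over $m$, and uses \eqref{om11} together with $\|u_0\|_{W^{2,2}} \leq 1/\tau$ from \eqref{tcon} to bound $\io\rho_0\,dx = \io e^{-\Delta u_0 + \tau u_0}dx \leq c\io e^{-\Delta u_0}dx$ (here the hypothesis $\tau\|u_0\|_{W^{2,2}} < 1$ and \eqref{emb} control the factor $e^{\tau u_0}$). I expect the main obstacle to be the careful treatment of the cross term $2\io\nabla\uut\cdot\nabla\esk\,dx$: at the continuous level this is an exact time derivative via $e^{-\Delta u}\partial_t\Delta u = -\partial_t e^{-\Delta u}$, but discretely one must replace this identity by the convexity inequality \eqref{ota1}, which only gives an inequality in the right direction and generates the genuine extra terms ($-\ln\esbj$ in the sup, the $\tau^2$ term, and the $\tau\|u_0\|_{1,\Omega}$ error); getting all signs consistent and the constants to depend only on $N,\Omega$ is the delicate part.
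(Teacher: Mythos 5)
Your proposal is correct and follows essentially the same route as the paper: square the discrete equation $G_k+\tau\sk-\Delta\esk=0$ and integrate, split the cross term so that the $(\esk-1)$ and $\tau\sk$ parts yield $2\io|\nabla\esk|^2dx$ and $8\tau\io|\nabla\sqrt{\esk}|^2dx$, convert $-\frac{2}{\tau}\io(\uk-\uko)\Delta\esk\,dx$ via \eqref{ota3} together with the test function $\tau(\esk-1)$ in \eqref{s31} (which is exactly where the three $\tau$-weighted terms on the left come from), telescope using part (4) of Lemma \ref{l24}, and control the initial data through \eqref{om11}, \eqref{emb} and \eqref{tcon}. The one imprecision is your attribution of the $-\ln\rbj$ in the sup term to ``$\tau\io(\uk-\uko)\uk$ terms''; it actually arises from applying Lemma \ref{l24}(4) with $f(s)=e^s-1$, $F(s)=e^s-s$ to $(\esk-1)(\sk-\sko)$, but this does not change the structure or validity of your argument.
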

\begin{proof} Using \eqref{gkd}, we can write
	\eqref{s31} as
	\begin{equation}\label{ss31}
		G_k+\tau\ln\esk-\Delta\esk=0\ \ \mbox{in $\Omega$.}
	\end{equation}
	Square both sides of this equation and integrate the resulting equation  over $\Omega$ to derive
	\begin{eqnarray}
		\io\left[(G_k+\tau\ln\esk)^2+\left(\Delta \esk\right)^2\right]dx
		-2\io (G_k+\tau\ln\esk)\Delta \esk dx=0.\label{ota4}
		%\nonumber\\	&=&with respect to $x$
	\end{eqnarray} 
	We easily see that
	\begin{equation}\label{ota7}
		-2\io (G_k+\tau\ln\esk)\Delta \esk dx=-2\io\frac{u_k-u_{k-1}}{\tau}\Delta \esk dx+2\io\left|\nabla\esk\right|^2dx+8\tau\io|\nabla\sqrt{\esk}|^2dx.
	\end{equation}
	Thus we only need to be concerned with 
	%To evaluate 
	the second integral in the above equation.
	%, %we assume that $t=t_k$ for some $k\in \{1,2,\cdots,j\}$.
	%Multiply through \eqref{omm1} by $\esk+L$ and integrate the resulting equation over $\Omega$ to obtain
	%\begin{equation}
	%	\io\partial_t\utj(\rbj+L)dx=-\io\left|\nabla\rbj\right|^2dx-\io(\rbj+L)^2dx-\tau\io(\rbj+L)\ln(\rbj+L)dx+\io(\rbj+L)dx.
	%	\end{equation}
	%	Assume $t\in(t_{k-1},t_k]$ for some $k\in \{1,2,\cdots,j\}$.
	For this purpose,	we  use $\tau  (\esk-1) $ as a test function in (\ref{s31}) to
	yield
	\begin{eqnarray}
		\int_\Omega   (\esk-1) (\uk-\uko) \, dx&=&-\tau\int_\Omega|\nabla\esk|^2 \, dx-\tau\io (\esk-1)^2 dx\nonumber\\	&&-\tau^2\int_\Omega   (\esk-1) \sk \, dx.\label{ota2}
	\end{eqnarray}
	%	We easily obtain from (\ref{s32}) that
	On the other hand, we can conclude from \eqref{s32} and \eqref{s33} that
	\begin{eqnarray}
		-\Delta(\uk-\uko)+\tau(\uk-\uko)&=&\sk-\sko\ \ \mbox{in $\Omega$},\label{ota3}\\
		\nabla(\uk-\uko)\cdot\nu=0\ \ \mbox{on $\po$ }
	\end{eqnarray}
	Note that the above system also holds for $k=1$ due to \eqref{om11} and \eqref{bcon}. 
	%implies$$\nabla(\uk-\uko)\cdot\nu=0\ \ \mbox{on $\po$ for each $k\in \{1,2,\cdots, j\}$.}$$
	%	Keeping (\ref{ota3}), (\ref{ota1}), and (\ref{ota2}) 
	With these in mind, we estimate
	\begin{eqnarray}
		%-2\io \pt\utj\Delta\rbj&=&-\frac{2}{\tau}\int_\Omega\Delta  \esk(\uk-\uko)\, dx\nonumber\\
		-2\io\frac{u_k-u_{k-1}}{\tau}\Delta \esk dx &=&
		-\frac{2}{\tau}\int_\Omega  (\esk-1)\Delta(\uk-\uko)\, dx\nonumber\\
		&=&\frac{2}{\tau}\int_\Omega  (\esk-1)(\sk-\sko)\, dx-2\int_\Omega  (\esk-1)(\uk-\uko)\, dx\nonumber\\
		&\geq&\frac{2}{\tau}\int_\Omega\left[(\esk-\ln\esk)-(\esko-\ln\esko)\right]\, dx+2\tau\int_\Omega|\nabla\esk|^2 \, dx\nonumber\\
		&&+2\tau\io (\esk-1)^2 dx
		+2\tau^2\int_\Omega   (\esk-1) \sk\,  dx.\label{ota6}
	\end{eqnarray}
	The last step is due to \eqref{ota1}.
	Collecting (\ref{ota7}) and (\ref{ota6}) in (\ref{ota4}) gives
	\begin{eqnarray}
		\lefteqn{\int_\Omega\left(\left(\Delta\esk\right)^2 +(G_k+\tau\ln\esk)^2+2\left|\nabla\esk\right|^2+8\tau\left|\nabla\sqrt{\esk}\right|^2\right)\, dx}\nonumber\\
		&&+\frac{2}{\tau}\int_\Omega\left[(\esk-\ln\esk)-(\esko-\ln\esko)\right] dx
		+2\tau\int_\Omega|\nabla\esk|^2 \, dx\nonumber\\
		&&+2\tau\io (\esk-1)^2 dx+2\tau^2\int_\Omega \,    (\esk-1) \sk \, dx
		\leq 0.\nonumber
	\end{eqnarray}
	Multiplying through the inequality by $\tau$ and summing up the resulting one over $k$, we obtain
	% that for each $s\in\{\tau, 2\tau,\dots, j\tau\}$
	%there holds
	\begin{eqnarray}
		\lefteqn{\int_{\Omega_T}\left(\left(\Delta\rbj\right)^2 +(\gbj+\tau\ln\rbj)^2+2\left|\nabla\esbj\right|^2+8\tau\left|\nabla\sqrt{\esbj}\right|^2\right)\, dxdt+2\sup_{0\leq t\leq T}\int_\Omega(\rbj-\ln\rbj) dx}\nonumber\\
		&&
		+2\tau\iot|\nabla\esbj|^2 \, dxdt+2\tau\iot (\esbj-1)^2 dxdt+2\tau^2 \iot (\esbj-1) \ln\rbj \, dxdt\nonumber\\
		%	&&+ 2\tau\iot(\esbj-1)\ln \esbj dxdt\nonumber\\
		&\leq & 4\int_\Omega \left(e^{-\Delta u_0(x)+\tau u_0(x)}+\Delta u_0-\tau u_0\right) \, dx\leq 4e^{\tau\|u_0\|_{\infty,\Omega}}\io e^{-\Delta u_0(x)}dx-4\tau\io u_0 dx
	%	-4\tau\int_{\{\rbj\leq1\}} (\esbj-1)\esbj dxdt-4\tau^2 \int_{\{\rbj\leq1\}}\,  \esbj \ln\rbj \, dxdt
	\nonumber\\
		&\leq &c(\Omega, N)\|e^{-\Delta u_0(x)}\|_{1,\Omega}+4\tau\|u_0\|_{1,\Omega}.\nonumber
	\end{eqnarray}
	The last step is due to \eqref{tcon} and \eqref{emb}.
	This finishes the proof.
	% It is not difficult to see that the above inequality is valid for each $s\in (0,T]$. Also, the function $\theta(s)=s e^s\geq -e^{-1}$ for $s\in(-\infty, 0)$, and thus the desired result follows.
\end{proof}

An immediate consequence of this lemma is
\begin{equation}\label{happy2}
	\sup_{0\leq t\leq T}\int_\Omega(\rbj+|\ln\rbj|) dx\leq c(\Omega, N)\varepsilon_0+\left(16\|u_0\|_{1,\Omega}+4|\Omega|T\right)\tau.
\end{equation}
To see this, we first integrate (\ref{omm22}) over $\Omega$ to obtain
$$\tau\int_{\Omega}\ubj \, dx=\int_{\Omega}\ln\rbj \, dx.$$ 
Then we calculate
\begin{eqnarray}
	\io\rbj dx&=&\io\left(\rbj-\ln\rbj\right)dx+\io\ln\rbj dx\nonumber\\
	&\leq&c(\Omega, N)\varepsilon_0+2\|u_0\|_{1,\Omega}\tau+\tau\io\ubj dx.\label{happy1}
\end{eqnarray}
On the other hand,
\begin{eqnarray}
	\int_{\Omega}|\ln\rbj|\, dx &=&
	\int_{\Omega}\left(\ln^+\rbj +\ln^-\rbj \right) dx\nonumber\\
	&=&-2\int_{\{\rbj<1\}}\ln\rbj  \, dx
	+\int_{\Omega}\ln\rbj  \, dx\nonumber\\
	&\leq &2\int_{\Omega}(\rbj-\ln\rbj)\, dx+\tau\int_{\Omega}\ubj \, dx\nonumber\\
	&\leq&c(\Omega, N)\varepsilon_0+4\|u_0\|_{1,\Omega}\tau+\tau\int_{\Omega}\ubj \, dx .\nonumber
\end{eqnarray}
Here we have used the fact that $\rbj-\ln\rbj>0$. Adding this inequality to \eqref{happy1} gives 
\begin{equation}\label{happy3}
	\io\rbj dx+	\int_{\Omega}|\ln\rbj|\, dx\leq c(\Omega, N)\varepsilon_0+6\|u_0\|_{1,\Omega}\tau+2\tau\int_{\Omega}\ubj \, dx. 
\end{equation}
We integrate \eqref{s31} over $\Omega$ to derive
$$\io\frac{u_k-u_{k-1}}{\tau}dx=-\io\esk dx-\tau\io\ln\esk dx+|\Omega|.$$
Multiply through this equation by $\tau$ and sum up the resulting equation over $k$ to get
\begin{eqnarray}
	\sup_{0\leq t\leq T}\left|\io\ubj dx\right|&\leq& \left|\io u_0 dx\right|+\iot\rbj dxdt+\tau\iot\left|\ln\rbj\right| dxdt+|\Omega|T\nonumber\\
	&\leq &\|u_0\|_{1,\Omega}+T\sup_{0\leq t\leq T}\io\rbj dx+\tau T\sup_{0\leq t\leq T}\io|\ln\rbj|dx+|\Omega|T.\label{ubb}
\end{eqnarray}
Keeping this in mind, we deduce from \eqref{happy3} that
\begin{eqnarray}
	\sup_{0\leq t\leq T}\int_\Omega(\rbj+|\ln\rbj|) dx&\leq &c(\Omega, N)\varepsilon_0+6\|u_0\|_{1,\Omega}\tau+2\tau\sup_{0\leq t\leq T}\left|\int_{\Omega}\ubj \, dx\right|\nonumber\\
	&\leq & c(\Omega, N)\varepsilon_0+8\|u_0\|_{1,\Omega}\tau+4\tau T\sup_{0\leq t\leq T}\int_\Omega(\rbj+|\ln\rbj|) dx\nonumber\\
	&&+2|\Omega|T\tau.
\end{eqnarray}
According to \eqref{tcon},  $4\tau T<\frac{1}{2}$. Use this in the above inequality to get \eqref{happy2}.

				Now we are ready to obtain a discretized version of (\ref{nm3}).
				\begin{lem}\label{l34}
					We have
					\begin{eqnarray}
						\lefteqn{\sup_{0\leq t\leq T}\int_{\Omega}
							\left(\frac{1}{2}\gbj^2+\left(\frac{1}{2}+\frac{\tau}{2}\right)|\nabla\esbj|^2+\frac{\tau}{2}(\esbj-1)^2\right)dx+2\iot\left(\pt\tilde{\sigma_j}\right)^2\, dxdt}\nonumber\\
						%	&&+\frac{1}{2}\int_{\Omega}\left((\esk-1)^2-(\esko-1)^2\right)\, dx\nonumber\\
						&&+\sup_{0\leq t\leq T}\tau^2\int_{\{\rbj>1\}}\esbj\ln\esbj dx+\sup_{0\leq t\leq T}\tau\io\left(\rbj-\ln\esbj\right)\, dx\nonumber\\
						&\leq&c\varepsilon_0+(c+cT)\tau.
					\end{eqnarray}
				Here $c$ depends only on $\Omega, N$.
				\end{lem}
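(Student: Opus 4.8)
The plan is to mimic at the discrete level the formal computation leading to \eqref{nm3}, namely to "differentiate" \eqref{s31} in time — which at the discrete level means subtracting the equation at step $k-1$ from that at step $k$ — and then test with the discrete analogue of $G$. Concretely, starting from \eqref{ss31}, I would take the difference
\begin{equation}
(G_k-G_{k-1})+\tau(\ln\esk-\ln\esko)-\Delta(\esk-\esko)=0\ \ \mbox{in $\Omega$,}\nonumber
\end{equation}
multiply by $\tau G_k$ and integrate over $\Omega$. This is the discrete substitute for using $G$ as a test function in $\partial_t(-\Delta\rho+G)=0$. As in the formal argument, I then need to rewrite each of the three resulting terms so that the "good" quantities $\gbj^2$, $|\nabla\esbj|^2$, $(\esbj-1)^2$ and $(\pt\ctj)^2$ emerge after summation in $k$.

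First I would handle the term $\tau\io G_k(G_k-G_{k-1})\,dx$ using inequality (3) of Lemma \ref{l24}, which gives $\geq \frac{\tau}{2}\io(G_k^2-G_{k-1}^2)\,dx$; summing telescopes and produces the $\frac12\sup_t\io\gbj^2$ contribution plus the initial term $\frac12\io G(x,0)^2\,dx=\frac12\io|\Delta e^{-\Delta u_0+\tau u_0}|^2\,dx$, which is controlled by $\varepsilon_0^2$ up to $\tau$-dependent corrections (using \eqref{tcon} and \eqref{emb} exactly as in Lemma \ref{l31}). For the Laplacian term $-\tau\io G_k\Delta(\esk-\esko)\,dx$, I substitute $G_k=\frac{u_k-u_{k-1}}{\tau}+\esk-1$; integrating by parts and using \eqref{ota3} (which converts $-\Delta(u_k-u_{k-1})$ into $\sk-\sko$ minus $\tau(u_k-u_{k-1})$) produces the $\io\partial_t\rho\,\partial_t\Delta u$-type term whose discrete analogue, via $\esk\,|\ln\esk-\ln\esko|$ bounded below by $4(\sqrt{\esk}-\sqrt{\esko})^2$ (inequality (5), i.e. \eqref{om2}), yields the $2\iot(\pt\ctj)^2$ term; it also produces $-\frac12\io|\nabla\esbj|^2$ telescoping terms and, from the $\esk-1$ piece, the $\frac{\tau}{2}(\esbj-1)^2$ contribution together with the $\tau^2$-weighted $\esbj\ln\esbj$ term (handled using \eqref{om1} to keep the sign under control on $\{\esbj>1\}$). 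The term $\tau^2\io(\ln\esk-\ln\esko)G_k\,dx$ is the most delicate; I would again split $G_k$ and use monotonicity of $\ln$ together with \eqref{ota1}/\eqref{om1} to absorb the bad pieces, keeping the $\tau\io(\rbj-\ln\esbj)$ reservoir from Lemma \ref{l31} available to dominate any leftover lower-order terms.

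The main obstacle I anticipate is bookkeeping the various $\tau$- and $\tau^2$-weighted cross terms that have no formal counterpart (they vanish as $\tau\to0$) and showing each is either nonnegative or absorbable. In particular the term $\tau^2\io(\ln\esk-\ln\esko)\frac{u_k-u_{k-1}}{\tau}\,dx$ has indefinite sign, and I expect to control it by invoking \eqref{ota3} once more to trade $\ln\esk-\ln\esko$ against $-\Delta(u_k-u_{k-1})+\tau(u_k-u_{k-1})$, then integrating by parts to get a $|\nabla(u_k-u_{k-1})|^2$-type quantity plus a term that combines with the $\ep_0$ budget from Lemma \ref{l31}. After all terms are assembled, multiplying through by $\tau$, summing over $k=1,\dots,j$, and telescoping, I would use Lemma \ref{l31} and \eqref{happy2} to bound every remainder by $c\varepsilon_0+(c+cT)\tau$, which is exactly the asserted estimate. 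The constants depend only on $\Omega$ and $N$ because the only inequalities used (Lemma \ref{l24}, \eqref{emb}, \eqref{tcon}) carry no other dependence.
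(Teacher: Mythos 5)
Your proposal follows essentially the same route as the paper's proof: difference \eqref{ss31} in $k$, test with $G_k$, use the differenced form of \eqref{s32} (i.e.\ \eqref{ota3}) both to produce the $2\iot(\pt\ctj)^2$ term via \eqref{om2} and to show the delicate cross term $\int(\ln\esk-\ln\esko)(u_k-u_{k-1})\,dx$ is in fact nonnegative, test \eqref{s31} with $\esk-\esko$ for the remaining mixed term, then telescope and bound the initial-data terms through \eqref{om11}, \eqref{emb}, \eqref{tcon} and Lemma \ref{l31}. The only slips are bookkeeping ones — you carry an extra overall factor of $\tau$ (multiplying the undivided difference by $\tau G_k$ telescopes to $\frac{\tau}{2}\io G_m^2$, not $\frac12\io G_m^2$, though since the identity equals zero this washes out on dividing through), and $G_0$ must be taken as $\Delta\rho_0-\tau\ln\rho_0$ rather than $\Delta\rho_0$ so that the $k=1$ difference equation is consistent — neither of which affects the validity of the argument.
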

				\begin{proof}Define
					\begin{equation}\label{godf}
						G_0=\Delta \rho_0-\tau\ln \rho_0.
					\end{equation}
				This combined with (\ref{ss31}) implies that
					\begin{equation}\label{eqfg}
					\frac{G_k-G_{k-1}}{\tau}-\Delta\left(\frac{\esk-\rho_{k-1}}{\tau}\right)+\sk-\ln\rho_{k-1}=0\ \ \mbox{in $\Omega$\ \ \mbox{for each $k\in\{1, 2,3,\cdots, j\}$}.}
					\end{equation}
				We can easily derive from \eqref{bcon} that
				$$\nabla\rho_0\cdot\nu=0\ \ \mbox{on $\po$.}$$
				Thus, we can use $G_k$ as a test function in \eqref{eqfg} (even for $k=1$) to get
					%Multiply through this equation by $\frac{u_k-u_{k-1}}{\tau}$ and integrate the resulting equation over $\Omega$ to obtain
					\begin{eqnarray}
					\lefteqn{\frac{1}{\tau}\int_\Omega
					\left(G_k-G_{k-1}\right)G_k dx}\nonumber\\
				&&+\frac{1}{\tau}\int_\Omega \nabla G_k\cdot\nabla
					\left(\esk-\rho_{k-1}\right) \, dx
					+\int_\Omega \left(\sk-\ln\rho_{k-1}\right)G_k\, dx =0.\label{otn4}
					\end{eqnarray}
				Once again, we can use (3) in Lemma \ref{l24} to handle the first term. The second integral in \eqref{otn4} can be evaluated as follows:
					\begin{eqnarray}
				\lefteqn{\frac{1}{\tau}\int_\Omega \nabla G_k\cdot\nabla
				\left(\esk-\rho_{k-1}\right) \, dx}\nonumber\\&=&
				-\frac{1}{\tau}\int_\Omega\left(\esk-\rho_{k-1}\right)
				\Delta\left(\frac{u_k-u_{k-1}}{\tau}\right) \, dx+\frac{1}{\tau}\int_\Omega \nabla \esk\cdot\nabla
				\left(\esk-\rho_{k-1}\right) \, dx\nonumber\\
				&\geq&\frac{1}{\tau^2}\int_\Omega\left(\esk-\rho_{k-1}\right)
				\left(\sk-\ln\rho_{k-1}\right) \, dx-\frac{1}{\tau}\int_\Omega\left(\esk-\rho_{k-1}\right)
				\left(u_k-u_{k-1}\right)\, dx\nonumber\\
				&&+\frac{1}{2\tau}\int_\Omega\left(|\nabla\esk|^2-|\nabla\esko|^2\right)dx
				\nonumber\\
					&\geq&\frac{2}{\tau^2}\int_\Omega\left(\sqrt{\esk}-\sqrt{\esko}\right)^2 dx-\frac{1}{\tau}\int_\Omega\left(\esk-\rho_{k-1}\right)
				\left(u_k-u_{k-1}\right)\, dx\nonumber\\
				&&+\frac{1}{2\tau}\int_\Omega\left(|\nabla\esk|^2-|\nabla\esko|^2\right)dx
				%&\geq& -\frac{1}{\tau}\int_\Omega\left(\esk-\rho_{k-1}\right)
				%\left(u_k-u_{k-1}\right)\, dx+\frac{1}{2\tau}\int_\Omega\left(|\nabla\esk|^2-|\nabla\esko|^2\right)dx
				.\label{otn2}
\end{eqnarray}
The last step is due to (\ref{om2}).
To estimate the second to last integral in (\ref{otn2}), we use $\esk-\esko$ as a test function in (\ref{s31}) and then apply (3) and \eqref{om1} in Lemma \ref{l24} to obtain
\begin{eqnarray}
\lefteqn{-\frac{1}{\tau}\int_{\Omega}\left(u_k-u_{k-1}\right)(\esk-\esko)\, dx}\nonumber\\
&=&\int_{\Omega}\nabla\esk\nabla(\esk-\esko)\, dx+\io(\esk-1)(\esk-\esko)\, dx\nonumber\\
&&+\tau\int_{\Omega}\sk(\esk-\esko)\, dx\nonumber\\
&\geq&\frac{1}{2}\int_{\Omega}\left(|\nabla\esk|^2-|\nabla\esko|^2\right)\, dx+\frac{1}{2}\int_{\Omega}\left((\esk-1)^2-(\esko-1)^2\right)\, dx\nonumber\\
&&+\tau\int_{\Omega}\left(\esk\sk-\esko\sko\right)\, dx
-\tau\int_{\Omega}\left(\esk-\esko\right) \,dx.\nonumber
\end{eqnarray}
 Calculating the third integral in (\ref{otn4}), we invoke \eqref{ota1} to obtain
\begin{eqnarray}
\int_\Omega \left(\sk-\ln\rho_{k-1}\right)G_k\, dx&=&\frac{1}{\tau} \int_{\Omega}(\sk-\sko)(\uk-\uko)\, dx\nonumber\\
&&+\int_{\Omega}(\sk-\sko)(\esk-1)dx\nonumber\\
&\geq&\int_{\Omega}\left(-\Delta(\uk-\uko)+\tau(\uk-\uko)\right)(\uk-\uko)\, dx\nonumber\\
&&+\io(\esk-\esko)dx-\int_{\Omega}(\sk-\sko)dx\nonumber\\
&\geq &\io\left[(\esk-\sk)-(\esko-\sko)\right]dx
%\int_{\Omega}|\nabla(\uk-\uko)|^2\, dx+\tau\int_{\Omega}|\uk-\uko|^2 \, dx
.\nonumber
\end{eqnarray}
Using the preceding results in (\ref{otn4}) yields
\begin{eqnarray}
\lefteqn{\frac{1}{2\tau}\int_{\Omega}
\left(G_k^2-G_{k-1}^2\right)\,dx+\left(\frac{1}{2\tau}+\frac{1}{2}\right)\int_{\Omega}\left(|\nabla\esk|^2-|\nabla\esko|^2\right)\, dx}\nonumber\\
&&+2\int_{\Omega}\left(\frac{\sqrt{\esk}-\sqrt{\esko}}{\tau}\right)^2\, dx+\frac{1}{2}\int_{\Omega}\left((\esk-1)^2-(\esko-1)^2\right)\, dx\nonumber\\
&&+\tau\int_{\Omega}\left(\esk\sk-\esko\sko\right)\, dx-\tau\int_{\Omega}\left(\esk-\esko\right) \, dx\nonumber\\
%&&-\tau\int_{\Omega}\left(\esk-\esko\right) \,dx
%+ 2\int_{\Omega}\left(\frac{e^{\frac{1}{2}\sk}-e^{\frac{1}{2}\sko}}{\tau}\right)^2\, dx\nonumber\\
%&&+\frac{1}{2}\int_{\Omega}\left(|\nabla\esk|^2-|\nabla\esko|^2\right)\, dx
%+\tau\int_{\Omega}\left(\esk\sk-\esko\sko\right)\, dx\nonumber\\ 
&&+\io\left[(\esk-\sk)-(\esko-\sko)\right]dx
\leq 0.\nonumber
\end{eqnarray}
Multiply through the inequality by $\tau$, sum the resulting inequality over $k$, and thereby obtain
\begin{eqnarray}
	\lefteqn{\sup_{0\leq t\leq T}\int_{\Omega}
		\left(\frac{1}{2}\gbj^2+\left(\frac{1}{2}+\frac{\tau}{2}\right)|\nabla\esbj|^2+\frac{\tau}{2}(\esbj-1)^2\right)dx+2\iot\left(\pt\tilde{\sigma_j}\right)^2\, dxdt}\nonumber\\
%	&&+\frac{1}{2}\int_{\Omega}\left((\esk-1)^2-(\esko-1)^2\right)\, dx\nonumber\\
	&&+\sup_{0\leq t\leq T}\tau^2\int_{\{\rbj>1\}}\esbj\ln\esbj dx+\sup_{0\leq t\leq T}\tau\io\left(\rbj-\ln\esbj\right)\, dx\nonumber\\%+(1-\tau)\sup_{0\leq t\leq T}\int_{\Omega}\esbj \, dx\nonumber\\
	%&&-\tau\int_{\Omega}\left(\esk-\esko\right) \,dx\sup_{0\leq t\leq T}\tau^2\int_{\{\rbj>1\}}\esbj\ln\esbj-\, dx
	%+ 2\int_{\Omega}\left(\frac{e^{\frac{1}{2}\sk}-e^{\frac{1}{2}\sko}}{\tau}\right)^2\, dx\nonumber\\
	%&&+\frac{1}{2}\int_{\Omega}\left(|\nabla\esk|^2-|\nabla\esko|^2\right)\, dx
	%+\tau\int_{\Omega}\left(\esk\sk-\esko\sko\right)\, dx\nonumber\\ 
	&\leq &\io G_0^2dx+\io|\nabla\rho_0|^2dx+\tau\io(\rho_0-1)^2dx-2\sup_{0\leq t\leq T}\tau^2\int_{\{\rbj\leq 1\}}\esbj\ln\esbj dx\nonumber\\
	&&+2\tau^2\sup_{0\leq t\leq T}\io\rbj dx+\tau\io(\rho_0-\ln\rho_0) dx.\label{june2}
\end{eqnarray}			
We calculate from \eqref{om11} and \eqref{godf} that
\begin{eqnarray}
	\nabla\rho_0&=&e^{\tau u_0}\nabla e^{-\Delta u_0}+\tau e^{-\Delta u_0}e^{\tau u_0}\nabla u_0,\nonumber\\
	\Delta\rho_0&=&e^{\tau u_0}\Delta e^{-\Delta u_0}+2\tau \nabla e^{-\Delta u_0}\cdot e^{\tau u_0}\nabla u_0+\tau e^{-\Delta u_0+\tau u_o}\left(\tau|\nabla u_0|^2+\Delta u_0\right).\nonumber
\end{eqnarray}
We can derive from \eqref{emb} and \eqref{tcon} that
\begin{eqnarray}
	\io|\nabla\rho_0|^2dx\leq c\varepsilon_0^2,\ \ \io G_0^2dx\leq c\varepsilon_0^2.\nonumber
\end{eqnarray}
Here $c$ depends only on $\Omega, N$. Moreover,
\begin{eqnarray}
	\io\rho_0^2dx&=&\io e^{-2\Delta u_0+2\tau u_0}dx\leq c\varepsilon_0^2,\nonumber\\
	\io(\rho_0-\ln\rho_0) dx&=&\io e^{-\Delta u_0+\tau u_0}dx -\tau\io u_0 dx\leq c\varepsilon_0+1,\nonumber\\
	\tau^2\sup_{0\leq t\leq T}\io\rbj dx&\leq &c\varepsilon_0+c\tau+cT\tau^3.\nonumber
\end{eqnarray}
The last inequality is due to Lemma \ref{l31}. Collecting these estimates in \eqref{june2} gives the lemma.
	\end{proof}
			\begin{lem}\label{l35}
				The sequence $\{\rtj\}$ is bounded $W^{1,2}(\Omega_T)$.
			\end{lem}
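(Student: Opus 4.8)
The plan is to bound $\rtj$, $\nabla\rtj$ and $\partial_t\rtj$ in $L^2(\omt)$ uniformly in $j$, exploiting the fact that $\rtj$ is the piecewise‑linear‑in‑time interpolant of the iterates $\rho_k$ (so $\rtj(\cdot,t_k)=\rho_k$). The decisive first step — and the only real obstacle — is to obtain a uniform sup‑norm bound on the iterates, i.e.\ the discrete counterpart of \eqref{rub}:
\[
	M:=\sup_{0\le k\le j}\|\rho_k\|_{\infty,\Omega}\le C,
\]
with $C$ independent of $j$. I would rewrite \eqref{ss31} as $-\Delta\rho_k=-G_k-\tau\ln\rho_k$ (with $\nabla\rho_k\cdot\nu=0$ on $\po$) and apply Lemma~\ref{wbd} with $p=2$, which is admissible because $N=2$ or $3$, so $2>\tfrac N2$. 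This gives
\[
	\|\rho_k\|_{\infty,\Omega}\le c\|\rho_k\|_{1,\Omega}+c\|G_k\|_{2,\Omega}+c\tau\|\ln\rho_k\|_{2,\Omega}.
\]
Here $\|\rho_k\|_{1,\Omega}\le C$ by \eqref{happy2}; Lemma~\ref{l34} gives $\|G_k\|_{2,\Omega}\le C$ and $\|\nabla\rho_k\|_{2,\Omega}\le C$, hence $\|\rho_k\|_{2,\Omega}\le C$ by inequality~(2) of Lemma~\ref{l23} (with $p=2$); and for the last term I would reproduce the argument behind \eqref{infty2} at the discrete level — testing \eqref{s31} with $h_\varepsilon(\rho_k-1)|\ln\rho_k|$ and sending $\varepsilon\to0$ — which yields $\tau\|\ln\rho_k\|_{2,\Omega}\le\bigl\|\frac{u_k-u_{k-1}}{\tau}\bigr\|_{2,\Omega}=\|G_k-\rho_k+1\|_{2,\Omega}\le C$. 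Together with $\|\rho_0\|_{\infty,\Omega}\le c\varepsilon_0$ (from \eqref{emb} and \eqref{tcon}), this proves the displayed bound.

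With $M$ in hand the remaining estimates are routine. For fixed $x$, $\rtj(x,t)$ is a convex combination of $\rho_{k-1}(x)\ge0$ and $\rho_k(x)\ge0$ on $(t_{k-1},t_k)$, so $\rtj^2\le\rho_{k-1}^2+\rho_k^2$ and, by convexity of $z\mapsto|z|^2$, $|\nabla\rtj|^2\le|\nabla\rho_{k-1}|^2+|\nabla\rho_k|^2$ a.e.\ there. Integrating over $\omt$ and reindexing, $\iot\rtj^2\le 2\sum_{k=0}^{j}\tau\|\rho_k\|_{2,\Omega}^2\le C$ and $\iot|\nabla\rtj|^2\le 2\tau\|\nabla\rho_0\|_{2,\Omega}^2+2\iot|\nabla\rbj|^2$, which is controlled by Lemma~\ref{l31} together with the estimate $\|\nabla\rho_0\|_{2,\Omega}^2\le c\varepsilon_0^2$ recorded in the proof of Lemma~\ref{l34}. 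For the time derivative, on $\Omega\times(t_{k-1},t_k)$ one has $\partial_t\rtj=\frac{\rho_k-\rho_{k-1}}{\tau}=\bigl(\sqrt{\rho_k}+\sqrt{\rho_{k-1}}\bigr)\partial_t\ctj$, hence $(\partial_t\rtj)^2\le 2(\rho_k+\rho_{k-1})(\partial_t\ctj)^2\le 4M(\partial_t\ctj)^2$; integrating and invoking $\iot(\partial_t\ctj)^2\le c\varepsilon_0+(c+cT)\tau$ from Lemma~\ref{l34} gives $\iot(\partial_t\rtj)^2\le C$. Adding the three bounds yields the lemma.

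The step I expect to carry the weight is exactly the sup‑norm bound on the $\rho_k$: without it the factor $(\sqrt{\rho_k}+\sqrt{\rho_{k-1}})^2$ cannot be absorbed, and the $L^2(\omt)$ control of $\partial_t\rtj$ does not follow from that of $\partial_t\ctj$ in Lemma~\ref{l34} alone. Proving that bound uniformly in $j$ is where Lemma~\ref{wbd}, the $L^p$‑estimate for $\ln\rho_k$ modeled on \eqref{infty2}, and the energy estimates of Lemmas~\ref{l31}–\ref{l34} all have to be combined.
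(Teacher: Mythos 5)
Your proof is correct and follows essentially the same route as the paper: apply Lemma \ref{wbd} to the elliptic equation for $\rho_k$ to get a uniform $L^\infty$ bound, then convert the $L^2$ control of $\partial_t\tilde\sigma_j$ from Lemma \ref{l34} into control of $\partial_t\tilde\rho_j$, with the gradient handled by convexity of the interpolant. The only (immaterial) difference is how you absorb the $\tau\ln\rho_k$ term in Lemma \ref{wbd}: the paper tests \eqref{ss31} with $\tau\ln\rho_k$ to get $\tau^2\|\ln\rho_k\|_{2,\Omega}^2\le\|G_k\|_{2,\Omega}^2$ directly, whereas you route through the $h_\varepsilon$ truncation argument of Lemma \ref{l29}, which works equally well.
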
	
		\begin{proof} Since $u_0\in L^\infty(\Omega)$ we can infer from Lemma \ref{l29} that
			\begin{equation}
				\ln\rbj(\cdot,t)\in L^\infty(\Omega)\ \ \mbox{for each $j$ and each $t\in[0,T]$.}\nonumber
			\end{equation}
		Thus we can use $\tau\ln\esk$ as a test function in \eqref{ss31} to get
		\begin{equation}
			\io \tau^2\ln^2\esk dx\leq -\io G_k\tau\ln\esk dx\leq \frac{1}{2}\io G_k^2dx+\frac{1}{2}\io\tau^2\ln^2\esk dx,\nonumber
		\end{equation}
	from whence follows
	\begin{equation}\label{rg}
		\io \tau^2\ln^2\rbj dx\leq \io \gbj^2 dx.
	\end{equation}
By virtue of (H2) and Lemma \ref{wbd},  there is a positive number $c=c(\Omega)$ such that
\begin{eqnarray}
	\|\rbj(\cdot, t)\|_{\infty,\Omega}&\leq& c\|\rbj(\cdot, t)\|_{1,\Omega}+c\|\gbj+\tau\ln\rbj\|_{2,\Omega}\nonumber\\
	&\leq &c\|\rbj(\cdot, t)\|_{1,\Omega}+c\|\gbj\|_{2,\Omega}\leq c.\label{rg2}
\end{eqnarray}
		%	We claim that $\{\ctj\}$ is bounded in $W^{1,2}(\Omega_T)$.
		%	To this end, we calculate
	We are ready to estimate, with the aid of Lemma \ref{l34}, that
			\begin{eqnarray}
				\iot\left(\pt\rtj\right)^2dxdt&=&\sum_{k=1}^j\int_{t_{k-1}}^{t_k}\io\left(\frac{\esk-\esko}{\tau}\right)^2dxdt\nonumber\\
				&=&\sum_{k=1}^j\int_{t_{k-1}}^{t_k}\io(\sqrt{\esk}+\sqrt{\esko})^2\left(\frac{\sqrt{\esk}-\sqrt{\esko}}{\tau}\right)^2dxdt\nonumber\\
				&\leq &4	\|\rbj\|_{\infty,\Omega_T}\iot\left(\pt\ctj\right)^2dxdt\leq c.
				\end{eqnarray}
		As for the gradient with respect to the space variables, we have
				\begin{eqnarray}
				\int_{\Omega_T}|\nabla\rtj|^2 \, dxdt &=&
				\sum_{k=1}^j\int_{t_{k-1}}^{t_k}
				\int_{\Omega} \left|\frac{t-t_{k-1}}{\tau}\nabla \esk+\left(1-\frac{t-t_{k-1}}{\tau}\right)\nabla \esko\right|^2dxdt\nonumber\\
				&\leq&\sum_{k=1}^j\int_{t_{k-1}}^{t_{k-1}}\left[ \frac{t-t_{k-1}}{\tau}\int_{\Omega}\left|\nabla \esk \right|^2dx+\left(1-\frac{t-t_{k-1}}{\tau}\right)\int_{\Omega}\left|\nabla \esko\right|^2dx\right]dt\nonumber\\
				&=&\sum_{k=1}^j\tau\left( \int_{\Omega}\left|\nabla\esk\right|^2dx+\int_{\Omega}\left|\nabla\esko \right|^2dx\right)\nonumber\\
				%	&\leq&\sum_{k=1}^jc\tau\left[ \int_{\Omega}\left|\nabla^2 %\esk\right|^2dx+\int_{\Omega}\left|\nabla^2 \esko\right|^2dx\right]\nonumber\\
				&\leq& 2\int_{\Omega_T}\left|\nabla \rbj \right|^2dxdt+\tau\int_{\Omega}\left|\nabla \rho_0\right|^2dx	\leq c.\label{rg3}
			\end{eqnarray}
		The last step is due to Lemma \ref{l31}. The proof is complete.	
			\end{proof}	
	%	This, alone with Lemma \ref{l34}, gives the desired result.
			It follows that $\{\rtj\}$ is precompact in $L^2(\Omega_T)$. Note that
		\begin{eqnarray}
			\int_{\Omega_T}|\rtj-\rbj|^2dxdt &=&\sum_{k=1}^{j}\int_{t_{k-1}}^{t_k}(t_k-t)^2\int_{\Omega}\left(\frac{\esk-\esko}{\tau}\right)^2dxdt
			\nonumber\\
			&=&\sum_{k=1}^{j}\tau^3\int_{\Omega}\left(\pt\rtj\right)^2dx\nonumber\\
			&=&\tau^2\int_{\Omega_T}\left(\pt\rtj\right)^2dxdt\leq c\tau^2.\nonumber
		\end{eqnarray}
		Subsequently, 
		$\{\rbj\}$ is also precompact in $L^2(\Omega_T)$. As a result, we can select a subsequence of $\{\rbj\}$, still denoted by $\{\rbj\}$, such that
		%with the property
		\begin{equation}
			\mbox{$\rbj$ converges a.e. on $\Omega_T$.}\nonumber
		\end{equation}
	%	Thus $\rbj=\left(\srbj\right)^2$ also converges a.e. on $\omt$. 
		%In view of (H2) and Lemma \ref{wbd}, we have
	%This together with (R1)-(R3) implies  that $\{\rbj\}$ is precompact in $L^2(\Omega_T)$. 
		%For this purpose, 
		
		We define a function $\ftj(x,t)$ on $\Omega_T$ as follows: For each  $(x,t)\in\Omega_T$ there is a $k\in\{1,2,\cdots,j\}$ such that $t\in(t_{k-1}, t_k]$. Subsequently, we set
	%Introduce the following function
	%If $x\in\Omega,  \ t\in(t_{k-1},t_k]$, define the functions
	\begin{eqnarray}
	%	\wtj(x,t)&=&\frac{t-t_{k-1}}{\tau}\wk(x)+\left(1-\frac{t-t_{k-1}}{\tau}\right)w_{k-1}(x),  \nonumber\\
	%	\wbj(x,t)&=&\wk(x)
		%,\\
			\ftj(x,t)
			 &=&\frac{t-t_{k-1}}{\tau}(G_k(x)+\tau\ln\esk(x))
		%\lntj(x,t)
		%&=&\frac{t-t_{k-1}}{\tau}\ln\esk(x)
		+\left(1-\frac{t-t_{k-1}}{\tau}\right)(G_{k-1}(x)+\tau\ln\esko(x)).\nonumber
	\end{eqnarray}
%	for $x\in\Omega,  \ t\in(t_{k-1},t_k]$, and $k\in\{1,2,\cdots, j\}$. 
We can write \eqref{ss31} as
	\begin{equation}
		-\Delta\rtj=-\ftj\ \ \mbox{in $\Omega_T$.}\label{wj1}
	\end{equation}
Of course, we also have the boundary condition
% and \eqref{wk2} as
	\begin{eqnarray}
	%	-\Delta \wbj+2\wbj^{-1}|\nabla \wbj|^2&= &(\gbj+\tau\ln\rbj)\wbj^2 \ \ \mbox{in %$\Omega_T$,}\label{wj1}\\
		\nabla \rtj\cdot\nu&=&0\ \ \mbox{on $\po\times(0,T)$.}\label{wj2}
	\end{eqnarray}	
Set
	$$\wtj=\frac{1}{\rtj}.$$
	We can infer from \eqref{rinfty} that $\rtj$ is bounded away from $0$ below for each fixed $j$. Thus $\wtj$ is well-defined.
	%We can easily verify from \eqref{ss31} and \eqref{s33} 
	An elementary calculation from \eqref{wj1} and \eqref{wj2} shows that $\wtj$ satisfies the problem
	\begin{eqnarray}
		-\Delta \wtj+2\wtj^{-1}|\nabla \wtj|^2&=&\ftj\wtj^2\ \ \mbox{in $\Omega$,}\nonumber\\
		\nabla \wtj\cdot\nu&=&0\ \ \mbox{on $\po$}\nonumber
	\end{eqnarray}
for each $t\in [0,T]$. 
\begin{lem} 
		The sequence $\{\wtj\}$ is bounded in $L^\infty(\Omega_T)$.
\end{lem}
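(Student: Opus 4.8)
The plan is to reproduce, at the time-discrete level, the formal derivation of \eqref{xu4} in Subsection \ref{sub2}, and then to close the estimate with the continuation mechanism from the proof of Lemma \ref{prop2.2}. For the latter I first need that $t\mapsto\|\wtj(\cdot,t)\|_{\infty,\Omega}$ is a finite, continuous function on $[0,T]$ for each fixed $j$. This follows from the regularity of the building blocks: by Lemma \ref{l29}, $\ln\rho_k\in L^\infty(\Omega)$, hence $-\Delta\rho_k=1-\rho_k-\tau\ln\rho_k-(u_k-u_{k-1})/\tau\in L^\infty(\Omega)$, and elliptic regularity for the Neumann problem on the $C^2$ domain gives $\rho_k\in W^{2,2}(\Omega)\hookrightarrow C(\overline\Omega)$ (here $N\le3$), with $\rho_k$ bounded away from $0$ on $\overline\Omega$; consequently $t\mapsto\rtj(\cdot,t)$ is continuous into $C(\overline\Omega)$ and stays bounded below, so the same holds for $\wtj=1/\rtj$, and $\|\wtj(\cdot,t)\|_{\infty,\Omega}$ is finite and continuous. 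Recall that $\wtj$ solves $-\Delta\wtj+2\wtj^{-1}|\nabla\wtj|^2=\ftj\wtj^2$ in $\Omega$ with $\nabla\wtj\cdot\nu=0$ on $\po$ for every $t$.

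Since $2\wtj^{-1}|\nabla\wtj|^2\ge0$, for each $t$ the function $\wtj(\cdot,t)$ is a weak subsolution of $-\Delta v=|\ftj|\wtj^2$ with homogeneous Neumann data, and $|\ftj|\wtj^2(\cdot,t)\in L^2(\Omega)$ because $\wtj^2(\cdot,t)\in L^\infty(\Omega)$ and $\ftj(\cdot,t)\in L^2(\Omega)$. The truncation argument proving Lemma \ref{wbd} uses only the subsolution property, so with $p=2>N/2$ it gives a constant $c=c(\Omega,N)$ with
\[
\|\wtj(\cdot,t)\|_{\infty,\Omega}\le c\|\wtj(\cdot,t)\|_{1,\Omega}+c\|\ftj(\cdot,t)\wtj^2(\cdot,t)\|_{2,\Omega}.
\]
Splitting $\io\wtj\,dx$ over $\{\wtj\le L\}$ and $\{\wtj>L\}=\{\rtj<1/L\}$ for a fixed $L>1$, and using $\|\ftj\wtj^2\|_{2,\Omega}\le\|\wtj\|_{\infty,\Omega}^2\|\ftj\|_{2,\Omega}$, we obtain the discrete analogue of \eqref{xu4}:
\[
\|\wtj(\cdot,t)\|_{\infty,\Omega}\le cL+c\big|\{\rtj(\cdot,t)<\tfrac1L\}\big|\,\|\wtj(\cdot,t)\|_{\infty,\Omega}+c\|\ftj(\cdot,t)\|_{2,\Omega}\,\|\wtj(\cdot,t)\|_{\infty,\Omega}^2 .
\]

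The crux is to bound the two coefficients, uniformly in $j$ and $t$, by quantities that are small when $\varepsilon_0$ and $\tau$ are small. On $\{\rtj<1/L\}$ one has $|\ln\rtj|=\ln^-\rtj$, so $|\{\rtj<1/L\}|\le\frac1{\ln L}\io\ln^-\rtj\,dx$; since $\ln^-$ is convex, $\io\ln^-\rtj\,dx$ on the interval containing $t$ is at most the larger of $\io\ln^-\rho_k\,dx$ and $\io\ln^-\rho_{k-1}\,dx$, which for $k\ge1$ is $\le\io|\ln\rbj(\cdot,t_k)|\,dx\le c\varepsilon_0+c\tau$ by \eqref{happy2}, and for $k=0$ equals $\io(\Delta u_0-\tau u_0)^+\,dx\le\tau\|u_0\|_{1,\Omega}$ provided we take $s_1\le1$, since then $\|e^{\Delta u_0}\|_{\infty,\Omega}<1$ forces $\Delta u_0<0$ a.e. As for $\ftj$, it interpolates $G_k+\tau\ln\rho_k$; for $k\ge1$, \eqref{rg} gives $\|G_k+\tau\ln\rho_k\|_{2,\Omega}\le2\|G_k\|_{2,\Omega}$, while the proof of Lemma \ref{l34} in fact yields $\sup_t\|\gbj\|_{2,\Omega}^2\le c\varepsilon_0^2+c\tau$ (its right-hand side is controlled by $\io G_0^2\,dx$ and $\io|\nabla\rho_0|^2\,dx$, both $O(\varepsilon_0^2)$, plus $O(\tau)$ terms); for $k=0$, \eqref{godf} gives $G_0+\tau\ln\rho_0=\Delta\rho_0$, and expanding $\Delta\rho_0=e^{\tau u_0}\Delta e^{-\Delta u_0}+2\nabla e^{-\Delta u_0}\cdot\nabla e^{\tau u_0}+e^{-\Delta u_0}\Delta e^{\tau u_0}$ shows every summand carries a factor $e^{-\Delta u_0}$, $\nabla e^{-\Delta u_0}$, or $\Delta e^{-\Delta u_0}$, so \eqref{emb}, \eqref{tcon} and $N\le3$ give $\|\Delta\rho_0\|_{2,\Omega}\le c\varepsilon_0$. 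Altogether $|\{\rtj<1/L\}|\le c(\varepsilon_0+\tau)/\ln L$ and $\|\ftj(\cdot,t)\|_{2,\Omega}\le c(\varepsilon_0+\sqrt\tau)$, uniformly in $j$ and $t$.

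Inserting these bounds, $s\mapsto Q_j(s):=c(\varepsilon_0+\sqrt\tau)s^2-\big(1-\tfrac{c(\varepsilon_0+\tau)}{\ln L}\big)s+cL$ satisfies $Q_j\big(\|\wtj(\cdot,t)\|_{\infty,\Omega}\big)\ge0$ for all $t$, which is the inequality underlying \eqref{xu4} up to the $O(\tau)$ and $O(\sqrt\tau)$ corrections that vanish as $j\to\infty$. Taking $L=L_0$ and the same $s_0,s_1$ as in Subsection \ref{sub2}, the hypothesis $\varepsilon_0<s_0$ makes the discriminant condition \eqref{xu1} strict, so it persists for $j$ large; then $\|\wtj(\cdot,t)\|_{\infty,\Omega}$ is trapped below the smaller root of $Q_j$ as soon as $\|\wtj(\cdot,0)\|_{\infty,\Omega}$ is, and $\|\wtj(\cdot,0)\|_{\infty,\Omega}=\|e^{\Delta u_0-\tau u_0}\|_{\infty,\Omega}\le e^{\tau\|u_0\|_{\infty,\Omega}}\|e^{\Delta u_0}\|_{\infty,\Omega}$ is below that root for $j$ large, since $\|e^{\Delta u_0}\|_{\infty,\Omega}<s_1$. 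The continuation argument in the proof of Lemma \ref{prop2.2}, together with the continuity of $t\mapsto\|\wtj(\cdot,t)\|_{\infty,\Omega}$ established above, then yields $\sup_{0\le t\le T}\|\wtj(\cdot,t)\|_{\infty,\Omega}\le C$ with $C$ independent of $j$ (for $j$ large; the remaining finitely many $\wtj$ are individually bounded), i.e.\ $\{\wtj\}$ is bounded in $L^\infty(\Omega_T)$. I expect the main difficulty to be the third paragraph: extracting uniform smallness from the initial time layer, above all the estimates $\|\Delta\rho_0\|_{2,\Omega}\le c\varepsilon_0$ and $\io G_0^2\,dx\le c\varepsilon_0^2$ from only the $W^{2,2}$-norm of $e^{-\Delta u_0}$ and the time-step restriction \eqref{tcon}, so that the smallness thresholds of Subsection \ref{sub2} are not degraded.
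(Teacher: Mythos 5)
Your proof is correct and follows essentially the same route as the paper: the sup-bound of Lemma \ref{wbd} applied to the equation for $\wtj$, the measure estimate $|\{\wtj>L\}|\leq c(\varepsilon_0+c\tau)/\ln L$ obtained from the $L^1$ bound on the logarithm via convexity, and the quadratic continuation argument of Lemma \ref{prop2.2} combined with the continuity of $t\mapsto\|\wtj(\cdot,t)\|_{\infty,\Omega}$. You are in fact more careful than the paper about the initial layer (the terms involving $\rho_0$ on the first subinterval, which the paper's appeal to $\sup_t\|\ln\rbj\|_{1,\Omega}$ does not literally cover) and about tracking powers of $\varepsilon_0$ and $\tau$; your extra restriction $s_1\leq 1$ is harmless but avoidable, since $\io\Delta u_0\,dx=0$ already gives $\io(\Delta u_0)^+dx=\io(\Delta u_0)^-dx\leq\io e^{-\Delta u_0}dx\leq c\varepsilon_0$.
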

\begin{proof}
The proof largely mimics what we did in Subsection \ref{sub2}. First,
%Recall that $N$ is either $2$ or $3$. 
%To simplify our subsequent presentation, we set
we can deduce from \eqref{rg} and Lemmas \ref{l31} and \ref{l34} that
\begin{eqnarray}
\lefteqn{	\sup_{0\leq t\leq T}\|\ln\rbj(\cdot,t)\|_{1, \Omega}+\sup_{0\leq t\leq T}\|\ftj(\cdot,t)\|_{2, \Omega}}\nonumber\\
	&\leq &\sup_{0\leq t\leq T}\|\ln\rbj(\cdot,t)\|_{1, \Omega}+2\sup_{0\leq t\leq T}\|\gbj(\cdot,t)\|_{2, \Omega}\nonumber\\
	&\leq &c(\varepsilon_0+c_1\tau).\nonumber
\end{eqnarray}
Here
 $c$ depends on $\Omega$ only, while $c_1$ depends on both $\|u_0\|_{1,\Omega}$ and $\Omega_T$.
Now pick a number $L$ from $(1,\infty)$. Note that $-\ln s$  is convex on $(0,1)$. Also, for each $t\in[0,T]$ there is a $k\in\{1,2,\cdots, j\}$ such that $t\in(t_{k-1}, t_k]$. With these in mind, we derive that
 \begin{eqnarray}
 	\left|\{\wtj>L\}\right|&=&\left|\left\{\rtj<\frac{1}{L}\right\}\right|\nonumber\\
 	&\leq &\frac{1}{\ln L}\int_{\{\rtj\leq \frac{1}{L}\}}|\ln\rtj|dx\nonumber\\
 	&\leq&\leq \frac{1}{\ln L}\int_{\{\rtj\leq \frac{1}{L}\}}\left(\frac{t-t_{k-1}}{\tau}|\ln\esk(x)|+\left(1-\frac{t-t_{k-1}}{\tau}\right)|\ln\esko(x)|\right)dx\nonumber\\
 	&\leq&\frac{\sup_{0\leq t\leq T}\|\ln\rbj(\cdot,t)\|_{1, \Omega}}{\ln L}\leq \frac{c(\varepsilon_0+c_1\tau)}{\ln L}.\nonumber
 \end{eqnarray}
This is the new version of \eqref{rsm}. For the remainder, 
%From here on, we can invoke the proof near the end of Subsection \ref{sub2}, starting with \eqref{xu4}. 
all we need to do is to substitute $\{w,\varepsilon_0, G\}$ in Subsection \ref{sub2} for $\{\wtj,\varepsilon_0+c_1\tau, \ftj\}$ here. Thus, the new version of \eqref{xu4}  is:   Lemma \ref{wbd} asserts that there is a positive number $c=c(\Omega)$ such that
\begin{eqnarray}
	\|\wtj\|_{\infty,\Omega}&\leq&c\|\wtj\|_{1,\Omega}+ c\|\ftj \wtj^{2}\|_{2,\Omega}\nonumber\\
	&\leq&	c\int_{\{\wtj> L\}}\wtj dx+c\int_{\{\wtj\leq L\}}\wtj dx+ c(\varepsilon_0+c_1\tau)^{}	\|\wtj\|_{\infty,\Omega}^{2}\nonumber\\
	%	&\leq&cy_0^{\frac{1}{4}}+c\|\pt uw\|_{2,\Omega}^{2}y_0^{\frac{1}{4}}.
	%	\nonumber\\
	%&\leq&+c(\varepsilon_0+c_1\tau)^{}\|w\|_{\infty,\Omega}^{2}\nonumber\\
	&\leq& c\|\wtj\|_{\infty,\Omega}\left|\left\{\wtj >L\right\}\right|+cL+c(\varepsilon_0+c_1\tau)\|\wtj\|_{\infty,\Omega}^{2}\nonumber\\
	&\leq &\frac{ c(\varepsilon_0+c_1\tau)\|\wtj\|_{\infty,\Omega}}{\ln L}+cL+c(\varepsilon_0+c_1\tau)\|\wtj\|_{\infty,\Omega}^{2}.\nonumber
\end{eqnarray}
Recall that $\rtj$ is piece-wise linear in the time variable $t$. Thus $\|\rtj(\cdot,t)\|_{\infty,\Omega}$ is a continuous function of $t$ for each fixed $j$.
As we mentioned earlier,  $\rtj$ is bounded away from $0$ below for each fixed $j$. We can conclude that $\|\wtj(\cdot,t)\|_{\infty,\Omega}$ is also a continuous function of $t$ for each fixed $j$.
This enables us to apply the proof of Lemma \ref{prop2.2}. Let $L_0,\ s_0,\  s_1,\    g$ be determined as in Subsection \ref{sub2}. 
%Upon doing so, we conclude that 
If
% $(\varepsilon_0+c_1\tau)$ is so small that
\begin{equation}\label{xu12}
	\varepsilon_0+c_1\tau< s_0,\ \ \|e^{\Delta u_0-\tau u_0}\|_{\infty,\Omega}<s_1
\end{equation}
%$$\varepsilon_0+c_1\tau< s_0$$
then
\begin{equation}\label{xu13}
	\|\wtj(\cdot,t)\|_{\infty,\Omega}\leq g(\varepsilon_0+c_1\tau, L_0) \leq g(\varepsilon_0+c_1, L_0)\ \ \mbox{for all $t>0$.}
\end{equation}
On account of \eqref{ez}, \eqref{xu12} holds for $j$ sufficiently large. Hence \eqref{xu13} also holds for $j$ sufficiently large.
This completes the proof of the lemma.
\end{proof} 

\begin{lem}
	The sequence $\{\utj\}$ is bounded in $W^{1,2}(\Omega_T)$.
\end{lem}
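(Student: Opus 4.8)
The plan is to bound, uniformly in $j$, each of the three pieces $\iot|\utj|^2\,dxdt$, $\iot|\nabla\utj|^2\,dxdt$ and $\iot|\partial_t\utj|^2\,dxdt$. Since $\utj$ is by construction the piecewise‑linear‑in‑time interpolant of the functions $\uk$, the convexity of $s\mapsto s^2$ reduces the first two to an estimate of the form $\iot\left(|\utj|^2+|\nabla\utj|^2\right)dxdt\le\tau\sum_{k=0}^{j}\|\uk\|_{W^{1,2}(\Omega)}^2$, so the matter comes down to showing $\|\uk\|_{W^{1,2}(\Omega)}\le c$ with $c$ independent of $k$ and $j$ (the term $k=0$ is harmless, since $u_0$ is fixed and \eqref{tcon} controls $\tau\|u_0\|_{W^{1,2}(\Omega)}^2$).

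The key preliminary is to upgrade the $L^\infty(\omt)$ bound on $\{\wtj\}$ from the preceding lemma into a uniform positive lower bound on the $\esk$ themselves. Since $\rtj$ is piecewise linear in $t$ with $\rtj(\cdot,t_k)=\esk$, and $\wtj=1/\rtj>0$, the bound $\|\wtj\|_{\infty,\omt}\le M$ forces $\rtj\ge 1/M$ on all of $\omt$, hence $\esk\ge 1/M$ for every $k$, with $M$ independent of $j$. Combined with \eqref{rg2}, which gives $\|\rbj\|_{\infty,\omt}\le c$, this yields $\|\ln\rbj\|_{\infty,\omt}\le c$ uniformly in $j$. (For each fixed $j$, \eqref{rinfty} already makes $\wtj$ well‑defined; what we are after here is the uniformity in $j$.)

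With this bound available I would test \eqref{s32} against $\uk$ to get $\io|\nabla\uk|^2dx+\tau\io\uk^2dx=\io\uk\sk\,dx\le c\|\uk\|_{2,\Omega}$, using $\|\sk\|_{\infty,\Omega}\le c$ and $|\Omega|<\infty$. Integrating \eqref{omm22} over $\Omega$ and feeding in \eqref{ubb} together with \eqref{happy2} shows $\left|\io\ubj\,dx\right|\le c$ uniformly, so the Poincar\'e inequality gives $\|\uk\|_{2,\Omega}\le c\|\nabla\uk\|_{2,\Omega}+c$; inserting this and absorbing the gradient term with Young's inequality yields $\io|\nabla\uk|^2dx\le c$, whence $\|\uk\|_{2,\Omega}\le c$ as well, so $\|\uk\|_{W^{1,2}(\Omega)}\le c$ uniformly in $k$ and $j$. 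By the reduction noted in the first paragraph, $\iot\left(|\utj|^2+|\nabla\utj|^2\right)dxdt\le c(T+1)$. For the time derivative I would use \eqref{gkd} (and \eqref{s31}) to write $\partial_t\utj=\uut=\gbj-\rbj+1$ on each slab $\Omega\times(t_{k-1},t_k]$, so that $\iot|\partial_t\utj|^2dxdt\le 3\iot\left(\gbj^2+\rbj^2+1\right)dxdt\le c$ by the $L^\infty(0,T;L^2(\Omega))$ bound on $\gbj$ in Lemma \ref{l34} and the $L^\infty(\omt)$ bound on $\rbj$ in \eqref{rg2}. Adding the three estimates gives the conclusion.

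I expect the only genuine obstacle to be the second step: absent a \emph{uniform} positivity $\esk\ge c_0>0$, the right‑hand side $\io\uk\sk\,dx$ cannot be controlled, because the naive estimate $\tau^2\io\ln^2\rbj\,dx\le\io\gbj^2\,dx$ coming from \eqref{rg} degenerates as $\tau\to0$; it is exactly the preceding lemma, read at the interpolation nodes $t_k$, that rescues it. Everything past that point is the routine energy bookkeeping already carried out in Subsection \ref{sub2} and in Lemmas \ref{l31}--\ref{l34}.
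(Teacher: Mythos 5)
Your proof is correct and follows essentially the same route as the paper: the time derivative is controlled via $\partial_t\utj=\gbj-\rbj+1$ together with Lemma \ref{l34} and \eqref{rg2}, and the spatial part comes from testing \eqref{s32} with $\uk$, controlling $\left|\io\ubj\,dx\right|$ through \eqref{ubb}, and closing with Poincar\'e and Young. Your only departure is cosmetic — you invoke the uniform bound $\|\ln\rbj\|_{\infty,\Omega}\leq c$ (from \eqref{rg2} and the $L^\infty$ bound on $\wtj$) up front, where the paper splits $\ubj$ into its mean and mean-zero part before multiplying by $\ln\rbj$ — and you correctly identify that the preceding lemma's lower bound on $\rho$ is the essential ingredient.
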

\begin{proof} First, we derive from Lemma \ref{l34} and \eqref{rg2} that
	$$\io\left(\pt \utj\right)^2dx\leq 2\io \gbj^2dx+2\io\left(\rbj-1\right)^2dx\leq c.$$
 Use $\ubj$ as a test function in \eqref{omm22} to get
\begin{equation}\label{xu6}
	\io\left|\nabla\ubj\right|^2dx+\tau\io\ubj^2dx=\io\ubj\ln\rbj dx.
\end{equation}  
By \eqref{ubb} and Poincar\'{e}'s inequality, we have
\begin{eqnarray}
	\io\ubj\ln\rbj dx&=&\io\left(\ubj-\frac{1}{|\Omega|}\io\ubj dx\right)\ln\rbj dx+\frac{1}{|\Omega|}\io\ubj dx\io\ln\rbj dx\nonumber\\
		&\leq &\varepsilon\io\left(\ubj-\frac{1}{|\Omega|}\io\ubj dx\right)^2dx+c(\varepsilon)\nonumber\\
		&\leq &c\varepsilon\io\left|\nabla\ubj\right|^2dx+c.\nonumber
		\end{eqnarray}
Use this in \eqref{xu6} to get
\begin{equation}
\sup_{0\leq t\leq T}	\io\left|\nabla\ubj\right|^2dx\leq c.\nonumber
\end{equation}	
Use Poincar\'{e}'s inequality again to derive
$$\io\ubj^2 dx\leq 2\io\left(\ubj-\frac{1}{|\Omega|}\io\ubj dx\right)^2dx+\frac{2}{|\Omega|}\left(\io\ubj dx\right)^2\leq c.$$
By a calculation similar to \eqref{rg3}, we obtain
$$\|\utj\|_{W^{1,2}(\Omega)}\leq c.$$
The proof is complete.\end{proof}

To finish the proof Theorem \ref{thm1},
we square both sides of \eqref{omm22} and integrate to get
$$\io\left(\Delta\ubj\right)^2dx+2\tau\io\left|\nabla\ubj\right|^2dx+\tau^2\io\ubj^2=\io\ln^2\rbj dx\leq c.$$
This together with \eqref{2.2} implies
$$\sup_{0\leq t\leq T}\|\ubj\|_{W^{2,2}(\Omega)}\leq c.$$
%The last step is due to \eqref{xu13}. 
Similarly,
$$\sup_{0\leq t\leq T}\|\rbj\|_{W^{2,2}(\Omega)}\leq c.$$
%Obviously, $\{\ubj\}$ is bounded in $L^\infty(\Omega)$, and so is $\{\utj\}$.
 We are ready to pass to the limit in the system \eqref{omm1}-\eqref{omm22}. This completes the proof of Theorem \ref{thm1}.
			
  \bigskip
  
  %\noindent{\bf Acknowledgment:}  This work was completed while the second author
 % was visiting Duke University. He would like to express his gratitude for the hospitality of
 % the hosting institute and the financial support from the KI-Net for his visit. The research of JL was partially supported by
 % KI-Net NSF RNMS grant No. 1107291 and NSF grant DMS 1514826.	


\begin{thebibliography}{9}
   				%	\bibitem{BF}
   				%	F.~Bernis and A.~Friedman, 
   				%	{\it Higher order nonlinear degenerate parabolic equations}, 
   				%	J.~Differential Equations, {\bf 83} (1990), 179--206.
   					
   					% \bibitem{Corwin} 
   					% I. Corwin,
   					 %{\it The Kardar-Parisi-Zhang equation and universality class}, Random Matrices: Theory and Applications, 1 (2012).
   					 
   					% \bibitem{DFL} 
   					% G.~Dal Maso, I.~Fonseca and G.~Leoni, 
   					% {\it Analytical validation of a continuum
   					 %	model for epitaxial growth with elasticity on vicinal surfaces}, 
   					% Arch.~Rational Mech.~Anal., {\bf 212} (2014), 1037--1064.
   					 \bibitem{D} E. DiBenedetto, {\it Degenerate Parabolic Equations}, Springer-Verlag, New York, 1993.
   					% \bibitem{EG}  L.C.~Evans and R.F.~Gariepy, 
   					% {\it Measure Theory and Fine Properties of Functions}, 
   					% CRC Press, Boca Raton 1992.
   					 	\bibitem{G1} 	Y. Gao, {\it Global strong solution with BV derivatives to singular solid-on-solid model with exponential nonlinearity}, J. Differential Equations, {\bf 267}(2019), 4429-4447.
   					 %	\bibitem{GLL} 
   					 %	Y. Gao, J.-G. Liu and J. Lu, {\it Continuum limit of a mesoscopic model of step motion on vicinal surfaces}, J. Nonlinear Science, {\bf 27} (2017), 873--926.
   					 \bibitem{GLL2} 
   					 Y. Gao, J.-G. Liu, and J. Lu, {\it Weak solutions of a continuum model for vicinal surface in the ADL regime}, SIAM J. Math. Anal., {\bf 49} (2017), 1705-1731.
   					 \bibitem{GLL3} Y. Gao, J.-G. Liu, and X. Y. Lu, {\it Gradient flow approach to an exponential thin film equation: global existence and hidden singularity}, ESAIM: Control, Optimisation and Calculus of Variations, {\bf  25}(2019), 49- . arXiv:1710.06995.
   					 %in preparation.
   					 \bibitem{GLLX} Y. Gao, J.-G. Liu , X. Y. Lu, and X. Xu, {\it Maximal monotone operator theory and its applications to thin film
   					 	equation in epitaxial growth on vicinal surface},  Calc. Var. Partial Differ. Equ., {\bf 57} (2018), no. 2, 57:55. 
   					 %   
   					  \bibitem{GLLM} 
 					Yuan Gao, Jian-Guo Liu, Jianfeng Lu, and J. L Marzuola, {\it Analysis of a continuum theory for broken
 						bond crystal surface models with
 						evaporation and deposition effects}, Nonlinearity, {\bf 33}(2020), 3816–3845.
%   					 \bibitem{GK}  
%   					 Y.~Giga and R.V.~Kohn, 
%   					 {\it Scale-invariant extinction time estimates for some singular diffusion equations},
%   					 Discrete Contin.~Dyn.~Syst., Ser. A {\bf 30} (2011), 509--535. 	
   					 
   					 \bibitem{GT} 
   					 D.~Gilbarg and N.S.~Trudinger, 
   					 {\it Elliptic Partial Differential Equations of Second  Order}, 
   					 Springer-Verlag, Berlin, 1983.
   					
   					  \bibitem{GM} 
   					 R. Granero-Belinch\'{o}n and M. Magliocca, {\it Global existence and decay to equilibrium for some crystal surface models}, Discrete \& Continuous Dynamical Systems-A, {\bf 39} (2019), 2101–2131. 
   					 % \bibitem{J2} 
   					% A.~J\"{u}ngel, 
   					 %{\it Dissipative quantum fluid models}, 
   					% Revista Mat.~Univ.~Parma, {\bf 3} (2012), 217-290.
   					 % (PDF) Global Existence and Exponential Decay to Equilibrium for DLSS-Type Equations. Available from: https://www.researchgate.net/publication/335882999_Global_Existence_and_Exponential_Decay_to_Equilibrium_for_DLSS-Type_Equations [accessed Jun 19 2021].
   					 \bibitem{KDM} 
   					 J.~Krug, H.T.~Dobbs, and S.~Majaniemi, 
   					 {\it mobility for the solid-on-solid model},  
   					 Z.~Phys.~B {\bf 97} (1995), 281--291.
   					 
   					  \bibitem{LS} 
   					 Jian-Guo Liu and R. Strain, {\it Global stability for solutions to the exponential PDE describing epitaxial growth}, Interfaces and Free Boundaries, {\bf 21} (2019), 61-86.
   					  \bibitem{LX} 
   					 J.-G. Liu and X. Xu, {\it Existence theorems for a multi-dimensional crystal surface model}, SIAM J. Math. Anal.,
   					 {\bf 48} (2016), 3667-3687.
   					   \bibitem{LX2} J.-G. Liu and X. Xu, {\it A class of functional inequalities and their
   					   	applications to fourth-order nonlinear parabolic
   					   	equations},  Commun.~Math.~Sci., {\bf 18}(2020), 1911–1948.
   					 \bibitem{MW} 
   					 J.L.~Marzuola and J.~Weare, 
   					 {\it Relaxation of a family of broken-bond crystal surface models}, 
   					 Physical Review, E {\bf 88} (2013), 032403.
   					%  \bibitem{S} P. Smereka, {\it Near equilibrium behavior of a solid-on-solid kinetic Monte Carlo model},
   					%  Unpublished notes.
   					 \bibitem{X1} X. Xu, 
   					 {\it Partial regularity for an exponential PDE  in crystal surface models}, arXiv:2101.00558v3 [math.AP] 7 Jan 2021.
   					 %, .\\
   					 %	\bibitem{GST} U. Gianazza, G. Savar\'{e} and G. Toscani, {\it The %Wasserstein gradient flow of the Fisher information
   					 %		and the quantum drift-diffusion equation}, Arch. Rational Mech.
   					 %	Anal., 194(2009), pp. 133-220.
   					 %\bibitem{AGL} J. Alvarez,  M. Guzm\'{a}n-Partida and J. Lakey, Spaces %of bounded
   					 
   					 \bibitem{X2} X. Xu, 
   					 {\it A functional inequality and its applications to a class of nonlinear fourth order parabolic
   					 equations}, Methods Appl. Anal., {\bf 23} (2016), 173-204.
   					% {\it On the effects of thermal degeneracy in the thermistor problem},
   					 %SIAM J. Math. Anal.,  {\bf 35} (2003), 1081--1098.
   					 
   					 \bibitem{X3} 
   					 X. Xu, 
   					 {\it An inequality by Gianazza, Savar\'{e} and Toscani and its applications to the viscous quantum Euler model},
   					 Commun.~Contemp.~Math., {\bf 15} (2013), 21 pages.
   					 \bibitem{X4} 
   					X. Xu, 
   					{\it Nonlinear diffusion in the Keller-Segel model of parabolic-parabolic type}, J.~Differential Equations, {\bf 276}(2021), 264-286.
   					 \bibitem{X5} 
   					X. Xu, 
   					{\it
   				 Existence theorems for a  crystal surface model involving the p-Laplace operator}, 
   				 SIAM J.~Math.~Anal.,  {\bf 50}(2018), 4261-4281.
   					\end{thebibliography}
\end{document}